\newtheorem{theorem}{Theorem}[section]
\newtheorem{corollary}[theorem]{Corollary}
\newtheorem{lemma}[theorem]{Lemma}
\newtheorem{proposition}[theorem]{Proposition}
\theoremstyle{definition}
\newtheorem{definition}[theorem]{Definition}
\newtheorem{example}[theorem]{Example}
\newtheorem{remark}[theorem]{Remark}
\numberwithin{equation}{section}
\begin{document}

\title[Slice regular composition operators]{Slice regular composition operators}

\author[G. B. Ren]{Guangbin Ren}

\thanks{This work was supported by the NNSF  of China (11071230), RFDP (20123402110068).}
\author[X. P. Wang]{Xieping Wang}
\address{Guangbin Ren, Department of Mathematics, University of Science and
Technology of China, Hefei 230026, China}
\email{rengb$\symbol{64}$ustc.edu.cn}
\address{Xieping Wang, Department of Mathematics, University of Science and
Technology of China, Hefei 230026,
China}\email{pwx$\symbol{64}$mail.ustc.edu.cn}

\keywords{ Slice regular functions, Regular composition, Denjoy-Wolff type theorem, Littlewood subordination principle, Composition operators.}
\subjclass[2010]{30G35, 32A26}
\begin{abstract}
In the article  the class of slice regular functions is shown to be  closed under a new  regular composition. The new regular composition turns out to be globally  defined in contrast to the locally defined version by Vlacci. Its advantage over  Vlacci's  is demonstrated by its associated  theory  of composition operators and dynamical systems for slice regular functions.  Especially,  the corresponding  Littlewood subordination principle and the Denjoy-Wolff type theorem can be  established.

\keywords{Slice regular functions \and Regular composition \and Denjoy-Wolff type theorem \and Littlewood subordination principle \and Composition operators}
\subjclass[2010]{30G35, 32A26}

\end{abstract}
\maketitle

\section{Introduction}

It is known that the composition $f(\varphi(x))$ of any  two formal power series
$$f(x) =\sum_{j=0}^\infty x^j a_j, \qquad \varphi(x) =\sum_{k=0}^\infty x^k b_k$$
is a formal power series when coefficients $a_j$, $b_k$ are taken in a commutative field and $b_0=0$. However, if the constant term $b_0$ of the power series $\varphi$ is not 0, the existence of the composition $f(\varphi(x))$ has been an open problem for many years.
Only recently has received some partial answers \cite{CC,GK} and these results are also extended to the  noncommutative setting by Vlacci \cite{Vlacci}  for  slice regular functions.

In this article a new slice regular composition of any two
slice regular functions will be introduced based on slice regular product.
It has the  advantage   over  Vlacci's   that
the new composition can thus  lead to   the theory  of composition operators for slice regular functions.

The theory of slice regular functions is initiated  by  Gentili and  Struppa
 \cite{GS3,GS4}.
It has  elegant  applications in the functional calculus for noncommutative operators \cite{Co5}.
The detailed up-to-date theory appears in the monograph \cite{GS2}.
The theory of slice regular functions is associated with the non-elliptic  differential operator with nonconstant coefficients given by
 $$|\underline x|^2\frac{\partial }{\partial x_0}+\underline x
 \sum_{j=1}^3 x_j \frac{\partial }{\partial x_j},
$$
where $\underline x$ is the vector part of the quaternion $x=x_0+\underline x\in \mathbb H$, see \cite{CGCS} for more details.
%
%
%
%
%

 We now introduce  the \textit{slice regular composition operator} $C_{\varphi}$  on the slice regular Hardy space $H^2(\mathbb B)$.
 Let $\varphi:\mathbb B\rightarrow \mathbb B$ be a slice regular function on the open unit ball $\mathbb B$ of quaternions and we define
 \begin{eqnarray*}
C_{\varphi}(f)(q)=\sum\limits_{n=0}^{\infty} \varphi^{\ast n}(q)a_n,
\end{eqnarray*}
where the  power series expansion of $f\in H^2(\mathbb B)$ takes  the form $$ f(q)=\sum\limits_{n=0}^{\infty}q^n a_n,$$
and $\varphi^{\ast n}$ is the $n$th regular power of $\varphi$ with respect to $\ast$-product.

It is worth  remarking here that $C_{\varphi}f$ is well defined and slice regular in $\mathbb B$ globally (see Section 4 below for details) while the Vlacci's version of slice regular composition is, in general, only defined in $\frac13 \mathbb B$ other than the whole ball $\mathbb B$, due to the Bohr phenomenon, see Section 3 for details.

Our first main result is
the  Littlewood subordination principle for slice regular composition.
\begin{theorem} {\bf(Littlewood Subordination Principle)}\label{first main result}
Let $\varphi:\mathbb B\rightarrow \mathbb B$ be a slice regular function and $\varphi(0)=0$, then $C_{\varphi}$ is a bounded composition operator on $H^2(\mathbb B)$ with norm $||C_{\varphi}||=1$.
\end{theorem}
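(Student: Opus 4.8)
The plan is to mirror Littlewood's classical inductive proof, but carried out intrinsically in the coefficient/$\ast$-product picture of $H^2(\mathbb B)$, and to isolate the one genuinely quaternionic difficulty as a multiplier estimate. The lower bound is immediate: the constant function $1$ (that is, $a_0=1$ and $a_n=0$ for $n\ge1$) satisfies $C_\varphi 1=\varphi^{\ast 0}=1$ and $\|1\|=1$, so $\|C_\varphi\|\ge1$ once $C_\varphi$ is known to be bounded. It therefore remains to prove the contraction estimate $\|C_\varphi f\|\le\|f\|$. Since polynomials are dense in $H^2(\mathbb B)$ and $C_\varphi$ is right-linear, I would first establish the bound for polynomials and then extend by continuity; this simultaneously shows that the series defining $C_\varphi f$ converges in $H^2(\mathbb B)$ to the bounded extension.

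Then I would induct on the degree. Writing $f(q)=a_0+\sum_{n\ge1}q^na_n=a_0+\mathrm{id}\ast\tilde h$ with $\tilde h(q)=\sum_{n\ge0}q^na_{n+1}$, the defining formula for $C_\varphi$ yields the key algebraic identity
\[
C_\varphi f=a_0+\varphi\ast C_\varphi\tilde h .
\]
This rests only on right-linearity, on the relation $C_\varphi(\mathrm{id}\ast\tilde h)=\varphi\ast C_\varphi\tilde h$ (which follows from $\varphi\ast(\psi c)=(\varphi\ast\psi)c$ for constants $c$ together with $\varphi^{\ast(n+1)}=\varphi\ast\varphi^{\ast n}$), and on $C_\varphi$ fixing constants. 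Because $\varphi(0)=0$, the function $\varphi\ast C_\varphi\tilde h$ has no constant term and is thus orthogonal to $a_0$ in $H^2(\mathbb B)$, so the Pythagorean identity gives $\|C_\varphi f\|^2=|a_0|^2+\|\varphi\ast C_\varphi\tilde h\|^2$. Granting the multiplier contraction below and the inductive hypothesis $\|C_\varphi\tilde h\|\le\|\tilde h\|$, this is at most $|a_0|^2+\|\tilde h\|^2=\|f\|^2$, which closes the induction; the base case $f\equiv a_0$ is trivial.

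The main obstacle is the \emph{multiplier estimate}: if $\varphi:\mathbb B\to\mathbb B$ is slice regular then $\|\varphi\ast u\|\le\|u\|$ for every $u\in H^2(\mathbb B)$. In the complex case this is the elementary fact that multiplication by an $H^\infty$-contraction is a contraction on $H^2$, proved via boundary integrals. The quaternionic version is delicate: the pointwise formula $(\varphi\ast u)(q)=\varphi(q)\,u\!\left(\varphi(q)^{-1}q\,\varphi(q)\right)$ twists the argument of $u$ by a point-dependent rotation $q\mapsto\varphi(q)^{-1}q\,\varphi(q)$, and a direct computation on a single sphere $x_0+\rho\,S^2$ (using the representation formula to write $\varphi(x_0+\rho I)=A+IB$ with $A,B$ fixed) shows that this rotation is \emph{not} measure preserving, so a naive slicewise or spherical integration fails to produce the bound.

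To overcome this I would route through the slice-preserving symmetrization $\varphi^s=\varphi^c\ast\varphi$, which satisfies $\varphi^s\le1$ precisely because $\varphi$ maps $\mathbb B$ into $\mathbb B$. Since $\varphi^s$ is slice preserving it behaves like a classical nonnegative symbol, and the Brown--Halmos-type identity
\[
\|\varphi\ast u\|^2=\langle T_{\varphi^s}u,u\rangle-\|H_\varphi u\|^2 ,
\]
with $T$ the analytic Toeplitz operator and $H_\varphi$ the associated Hankel-type operator, reduces the claim to the positivity of $T_{1-\varphi^s}$, which holds because $1-\varphi^s\ge0$. Verifying this Toeplitz--Hankel decomposition inside the slice regular category is where the genuine work lies; alternatively, one may simply invoke the contraction property of $\ast$-multiplication furnished by the Hardy-space machinery developed earlier in the paper. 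Either way, combining the multiplier estimate with the inductive step and the density argument yields $\|C_\varphi\|\le1$, and together with the lower bound this gives $\|C_\varphi\|=1$.
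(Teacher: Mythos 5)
Your overall architecture coincides with the paper's: the lower bound from $C_\varphi 1=1$; the decomposition $f=f(0)+q\ast Tf$ (your $\tilde h$ is exactly the paper's backward shift $Tf$); the identity $C_\varphi f=f(0)+\varphi\ast C_\varphi(Tf)$; orthogonality of the two terms because $\varphi(0)=0$; iteration over polynomials; and passage to general $f\in H^2(\mathbb B)$ by density. (One minor caveat on that last step: "extend by continuity" alone does not show that the extended operator is still given pointwise by $\sum\varphi^{\ast n}a_n$; the paper fills this in by proving $f_n^{\odot}\varphi\to f^{\odot}\varphi$ uniformly on compact subsets via the Schwarz lemma, and then estimating integrals over circles of radius $r<1$.)

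The genuine gap is exactly where you place it: the left-multiplier estimate $\|\varphi\ast u\|_2\le\|\varphi\|_\infty\|u\|_2$. You correctly observe that the pointwise formula $\varphi\ast u(q)=\varphi(q)\,u\big(\varphi(q)^{-1}q\varphi(q)\big)$ twists the argument of $u$, so naive slicewise integration fails; but your proposed remedy, the Brown--Halmos-type identity $\|\varphi\ast u\|^2=\langle T_{\varphi^s}u,u\rangle-\|H_\varphi u\|^2$, is left unverified, and you concede that validating it is "where the genuine work lies." It is moreover doubtful as stated: $\varphi^s=\varphi\ast\varphi^c$ is a regular function with real Taylor coefficients, not the boundary modulus $|\varphi|^2$, so the positivity "$1-\varphi^s\ge0$" is not meaningful pointwise on $\mathbb B$, and computing the adjoint of left $\ast$-multiplication in a quaternionic Hilbert module is itself nontrivial. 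Your fallback --- invoking "the contraction property of $\ast$-multiplication furnished by the Hardy-space machinery developed earlier in the paper" --- is circular, since that contraction property is precisely the lemma at issue. The paper closes this step with a short trick your argument is missing: the twist is harmless when the bounded factor sits on the \emph{right}, because then $|u\ast\psi(q)|\le|u(q)|\,\|\psi\|_\infty$ integrates directly; the left case reduces to the right case by regular conjugation,
\begin{equation*}
\|\varphi\ast u\|_2=\|(\varphi\ast u)^c\|_2=\|u^c\ast\varphi^c\|_2\le\|\varphi^c\|_\infty\|u^c\|_2=\|\varphi\|_\infty\|u\|_2,
\end{equation*}
using that regular conjugation preserves the $H^2$ norm (conjugated coefficients have the same moduli, Proposition \ref{prop:H^2-norm}) and the sup norm (Proposition \ref{prop:conjugate1}). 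With this lemma in hand, the rest of your induction goes through verbatim; without it, the proof is incomplete.
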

%

We  remark that, without the restriction $\varphi(0)=0$,  the exact norm of a composition
operator $C_{\varphi}$ is still unknown even in the classical holomorphic Hardy space $H^2(\mathbb D)$.
In the present article, we can provide the exact norm of a composition operator
on the slice regular Hardy space $H^2(\mathbb B)$ in the special case of which is induced by the slice  regular M\"obius transformations.
We refer to \cite{Cowen,BFHS,GP} for related questions concerning the norms of holomorphic composition operators.
\begin{theorem}
Let $\varphi$ be a regular M\"{o}bius transformation of $\mathbb B$, then
the composition operators $C_{\varphi}$ is  bounded on $H^2(\mathbb B)$ with norm
$$||C_{\varphi}||=\bigg(\frac{1+|\varphi(0)|}{1-|\varphi(0)|}\bigg)^{\frac 12}.$$
\end{theorem}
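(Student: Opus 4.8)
The plan is to show that $\|C_\varphi\|$ depends only on the modulus $|\varphi(0)|$ and then to compute it on an explicit \emph{real} model, bracketing the resulting norm between matching lower and upper bounds.

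\emph{Reduction to a real involution.} First I would record two facts. If $\psi$ is a regular M\"obius transformation fixing the origin, then $\psi(q)=qs$ for a unit quaternion $s$, so $C_\psi f=\sum_n q^n s^n a_n$ satisfies $\|C_\psi f\|=\|f\|$; being invertible, $C_\psi$ is unitary (this realizes the equality in Theorem \ref{first main result}). Second, regular composition is associative, whence $C_{g\circ_\ast h}=C_hC_g$. Writing $t=|\varphi(0)|$ and invoking the classification of regular M\"obius transformations, I would factor $\varphi=\rho_1\circ_\ast\sigma_t\circ_\ast\rho_2$ with $\rho_1,\rho_2$ fixing $0$ and $\sigma_t(q)=(t-q)(1-tq)^{-1}$ the involution with the real parameter $t$. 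Because $\sigma_t$ has real coefficients, $\sigma_t^{\ast n}=\sigma_t^{\,n}$ and $C_{\sigma_t}f=f\circ\sigma_t$ is honest composition by a slice-preserving map; combined with the two facts this yields $\|C_\varphi\|=\|C_{\sigma_t}\|$, reducing the theorem to proving $\|C_{\sigma_t}\|=\big((1+t)/(1-t)\big)^{1/2}$.

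\emph{Upper bound.} I would evaluate the norm on a single slice: fixing an imaginary unit $I$, one has $\|f\|^2=\frac1{2\pi}\int_0^{2\pi}|f(e^{I\theta})|^2\,d\theta$. Since $\sigma_t$ preserves the slice $\mathbb{C}_I$ and acts there as the classical automorphism $z\mapsto(t-z)/(1-tz)$, I substitute $e^{I\psi}=\sigma_t(e^{I\theta})$; the Jacobian $d\theta/d\psi$ equals the Poisson kernel $(1-t^2)/|1-te^{I\psi}|^2$, whose maximum over the slice circle is $(1+t)/(1-t)$. This gives $\|C_{\sigma_t}f\|^2\le\frac{1+t}{1-t}\,\|f\|^2$. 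For the \emph{lower bound} I would use the reproducing kernels $K_w$ of $H^2(\mathbb B)$, for which $\|K_w\|^2=(1-|w|^2)^{-1}$ and, since $C_{\sigma_t}$ is ordinary composition, $C_{\sigma_t}^\ast K_w=K_{\sigma_t(w)}$. The elementary identity $1-|\sigma_t(w)|^2=(1-t^2)(1-|w|^2)/|1-tw|^2$, valid for quaternionic $w$ because $t$ is real, then gives
\[
\|C_{\sigma_t}\|^2=\|C_{\sigma_t}^\ast\|^2\ge\sup_{w}\frac{\|K_{\sigma_t(w)}\|^2}{\|K_w\|^2}=\sup_w\frac{|1-tw|^2}{1-t^2}=\frac{(1+t)^2}{1-t^2}=\frac{1+t}{1-t},
\]
matching the upper bound and completing the argument.

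\emph{Main obstacle.} Two steps carry the technical weight. The reduction rests on the homomorphism property $C_{g\circ_\ast h}=C_hC_g$ and on the precise structure of the regular M\"obius group, both of which must be handled through the noncommutative $\ast$-calculus. More delicate is the lower bound: establishing the reproducing-kernel identity and the adjoint formula $C_{\sigma_t}^\ast K_w=K_{\sigma_t(w)}$ requires controlling the order of quaternionic multiplication in the pairing $\langle f,K_w\rangle$, while the slice change of variables needs a Fatou-type boundary theorem for $H^2(\mathbb B)$. I expect the noncommutative reproducing-kernel and adjoint step to be the chief hurdle, since it is exactly where the $\ast$-product twist in point evaluation could disrupt the clean classical identities.
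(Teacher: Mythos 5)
Your overall architecture (reduce to a real parameter $t=|\varphi(0)|$, then match an upper bound from a slice change of variables against a lower bound from reproducing kernels) would produce the correct value, but the reduction step is broken as you justify it. You assert that ``regular composition is associative, whence $C_{g\circ_\ast h}=C_hC_g$''. This is false for the composition $\odot$ used here: Section 4 of the paper exhibits $f(q)=q^2$, $g(q)=1+qI$, $\varphi(q)=qJ$ with $I\perp J$, for which
$$(f^{\odot}g)^{\odot}\varphi(q)=q^2+2qJI+1,\qquad f^{\odot}\big(g^{\odot}\varphi\big)(q)=-q^2+2qJI+1.$$
Since $C_\varphi C_g f=(f^{\odot}g)^{\odot}\varphi$ while $C_{g^{\odot}\varphi}f=f^{\odot}(g^{\odot}\varphi)$, the homomorphism property you invoke fails in general, and with it your identity $\|C_\varphi\|=\|C_{\sigma_t}\|$. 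The same non-associativity also means your factorization $\varphi=\rho_1\circ_\ast\sigma_t\circ_\ast\rho_2$ is not even unambiguously defined without specifying brackets.

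The reduction can be repaired, but only by the very mechanism the paper uses directly, which makes the rotation detour redundant. Writing $a=\varphi(0)=te^{I\theta}$, the rotation units are forced to be $e^{\pm I\theta}\in\mathbb C_I$, so $\rho_1$, $\sigma_t$, $\rho_2$ all have power-series coefficients in the single slice $\mathbb C_I$ and hence map $\mathbb B_I$ into itself. For any regular $\psi$ with coefficients in $\mathbb C_I$ and \emph{arbitrary} $f$, one checks from the definition that $(f^{\odot}\psi)_I=f_I\circ\psi_I$, and the Identity Principle (Theorem \ref{th:IP-theorem}) then gives $f^{\odot}\psi={\rm ext}(f_I\circ\psi_I)$; iterating this over the three factors yields $C_\varphi=C_{\rho_2}C_{\sigma_t}C_{\rho_1}$, because ordinary composition of the slice restrictions \emph{is} associative. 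But this identity $f^{\odot}\varphi={\rm ext}(f_I\circ\varphi_I)$ is exactly what the paper's proof rests on, and once you have it you can skip the factorization entirely: the paper gets the upper bound from Theorem \ref{eq:lst} (split $f_I=F+GJ$ and apply the classical Cowen--MacCluer bound to each component), and the lower bound from the adjoint of $C_{\varphi_I}$ acting on the kernels $K_w$ of the \emph{slice} space $H^2(\mathbb B_I)$, with $w=-re^{I\theta}\to\partial\mathbb B_I$ --- essentially your kernel computation, but carried out inside one complex slice, which sidesteps the quaternionic-module adjoint and Fatou-type boundary issues that you yourself identify as the chief hurdles of your route.
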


When $\varphi(\mathbb B)\subset\subset \mathbb B$, we know that the composition operator is compact on regular Hardy space $H^p(\mathbb B)$.
\begin{theorem}
Let $\varphi:\mathbb B\rightarrow \mathbb B$ be a regular function on $\mathbb B$ such that
$||\varphi||_{\infty}<1$. Then  $C_{\varphi}$ is a compact operator on $H^p(\mathbb B)$ with $1\leq p\leq \infty$.
\end{theorem}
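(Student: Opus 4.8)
The plan is to verify the standard sequential criterion for compactness of composition operators and then reduce the whole matter to a single estimate in the sup norm. Concretely, I will show that whenever $\{f_n\}$ is bounded in $H^p(\mathbb B)$ and converges to $0$ uniformly on compact subsets of $\mathbb B$, one has $\|C_\varphi f_n\|_{H^p}\to 0$. Granting this, an arbitrary bounded sequence $\{g_n\}$ in $H^p(\mathbb B)$ forms a normal family on $\mathbb B$ (uniform boundedness of the $H^p$-norms produces local uniform bounds, by slicewise reduction), so some subsequence $\{g_{n_j}\}$ converges uniformly on compact subsets to a regular $g\in H^p(\mathbb B)$; applying the criterion to $f_j=g_{n_j}-g$ shows that $\{C_\varphi g_{n_j}\}$ is norm-convergent, which is exactly compactness. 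The advantage of this route is that the decisive estimate I will produce lives in the sup norm, which dominates every $\|\cdot\|_{H^p}$ for $1\le p\le\infty$, so all exponents, including $p=\infty$, are handled simultaneously.

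The crucial ingredient is the uniform decay of the regular powers of $\varphi$. Set $r=\|\varphi\|_\infty<1$. Using the pointwise form of the $*$-product, $(f*g)(q)=f(q)\,g\big(f(q)^{-1}qf(q)\big)$ when $f(q)\ne 0$ (and $(f*g)(q)=0$ otherwise), I observe that $f(q)^{-1}qf(q)$ lies on the same $2$-sphere as $q$, hence stays in $\mathbb B$; it follows that $\|f*g\|_\infty\le\|f\|_\infty\|g\|_\infty$. Iterating this gives $\|\varphi^{*k}\|_\infty\le r^k$ for every $k$. Consequently, writing $f_n(q)=\sum_k q^k a_k^{(n)}$, the definition of $C_\varphi$ yields the pointwise bound $|C_\varphi f_n(q)|\le\sum_k r^k|a_k^{(n)}|$ for all $q\in\mathbb B$.

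To convert this into genuine decay I will feed in Cauchy estimates on a slightly larger ball. Fix $\rho$ with $r<\rho<1$. Restricting $f_n$ to a slice $L_I$ and applying the holomorphic Cauchy formula there gives $|a_k^{(n)}|\le \rho^{-k}\sup_{|q|=\rho}|f_n(q)|$, while boundedness of point evaluation on $H^p(\mathbb B)$ bounds $\sup_{|q|=\rho}|f_n(q)|$ by $C_\rho\sup_n\|f_n\|_{H^p}=:C_\rho M$. Splitting $\sum_k r^k|a_k^{(n)}|$ at an index $N$, the tail is at most $C_\rho M\sum_{k\ge N}(r/\rho)^k$, which is $<\varepsilon$ uniformly in $n$ once $N$ is large, since $r/\rho<1$; for that fixed $N$, uniform convergence $f_n\to 0$ on $|q|=\rho$ forces $a_k^{(n)}\to 0$ for each $k<N$, so the head tends to $0$ as $n\to\infty$. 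Hence $\|C_\varphi f_n\|_\infty\le\sum_k r^k|a_k^{(n)}|\to 0$, and because $\|\cdot\|_{H^p}\le\|\cdot\|_\infty$ for $1\le p\le\infty$, also $\|C_\varphi f_n\|_{H^p}\to 0$, which establishes the criterion and finishes the proof.

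The step I expect to demand the most care is the inequality $\|\varphi^{*k}\|_\infty\le r^k$: it rests on the pointwise $*$-product identity and on the fact that the conjugation $q\mapsto f(q)^{-1}qf(q)$ preserves the modulus and leaves $\mathbb B$ invariant, and the degenerate set where $\varphi$ vanishes must be treated separately. A secondary point needing justification is the sequential compactness criterion together with the normal-family argument in the slice regular Hardy space, in particular the boundedness of point evaluations and the fact that a local uniform limit of a norm-bounded sequence again lies in $H^p(\mathbb B)$ with controlled norm; these are the quaternionic analogues of classical $H^p$ facts and should follow by slicewise reduction to the holomorphic case.
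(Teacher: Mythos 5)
Your argument is correct, but it takes a genuinely different route from the paper's. The paper proves compactness by exhibiting $C_\varphi$ as an operator-norm limit of the finite-rank truncations $S_Nf=\sum_{n=0}^N\varphi^{\ast n}f^{(n)}(0)/n!$: for $p=2$ the error estimate follows from Cauchy--Schwarz and $\|f\|_2^2=\sum_n|\hat f(n)|^2$, and for $p\neq2$ from an $H^p$ Cauchy inequality for Taylor coefficients, $|f^{(n)}(0)|\le\sqrt2\,n!\,e^{1/p}(1+np/2)^{1/p}\|f\|_p$, obtained by optimizing the radius in the slice Cauchy formula. You instead run the classical sequential criterion (bounded sequences converging to $0$ locally uniformly are mapped to norm-null sequences) together with a Montel-type normal-family extraction. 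Your core estimates are the same ones the paper has on hand --- the bound $\|\varphi^{\ast k}\|_\infty\le\|\varphi\|_\infty^k$ via Proposition \ref{prop:RP} (cf.\ Remark \ref{eq:R1}), the point-evaluation bound of Lemma \ref{eq:H^p modulus}, and the slice Cauchy formula --- but you keep a fixed radius $\rho\in(\|\varphi\|_\infty,1)$ instead of optimizing it, which suffices because only the geometric decay of $(r/\rho)^k$ is needed; this lets you treat all $1\le p\le\infty$ in one stroke, with no case split. What the paper's route buys: it needs no normal-family theory, and it yields the formally stronger statement that $C_\varphi$ is a norm limit of finite-rank operators. What your route costs: the quaternionic Montel theorem and the fact that a locally uniform limit of an $H^p$-bounded sequence of regular functions again lies in $H^p$ with the same bound are invoked but not proved; both are true (splitting lemma plus classical Montel plus the representation formula for the first, subharmonicity of $|f_I|^p$ on slices for the second), but neither appears in the paper, so they would have to be spelled out. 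Note finally that your sup-norm estimate $\|C_\varphi f\|_\infty\le C_\rho\|f\|_p\sum_k(r/\rho)^k$, applied to Taylor tails, already gives $\|S_N-C_\varphi\|_{H^p\to H^p}\to0$; so your own key inequality could be used to recover the paper's finite-rank proof directly and bypass the normal-family machinery altogether.
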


The feature of our new slice regular composition lies at its analogous
dynamical behaviors of the iterates of slice regular self-mappings as in the holomorphic setting.
In particular, we have  the Denjoy-Wolff type theorem for slice regular mappings.
\begin{theorem} {\bf(Denjoy-Wolff)}
Let $Id\neq f\in Aut(\mathbb B)$ be not elliptic automorphism in which $f(\mathbb B_I)\subseteq \mathbb B_I $ for some $I\in \mathbb S $, then the sequence of  $\{f^{\odot n}\}$ uniformly converges on every compact subset of $\mathbb B$ to a boundary fixed point of $f$.
\end{theorem}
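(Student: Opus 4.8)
The plan is to reduce the whole statement to the classical Denjoy--Wolff theorem on a single slice and then to globalize by means of the Representation Formula. Fix the imaginary unit $I\in\mathbb S$ for which $f(\mathbb B_I)\subseteq\mathbb B_I$ and write $\mathbb C_I=\mathbb R+\mathbb R I$, so that $\mathbb B_I=\mathbb B\cap\mathbb C_I$ is a two-dimensional disc on which $\mathbb C_I$ acts as a commutative field isomorphic to $\mathbb C$. The first step is to identify the restriction of the regular iterates with ordinary iterates on this disc; the second is to apply the classical theorem to the holomorphic self-map $g:=f|_{\mathbb B_I}$ of $\mathbb B_I$; the third is to propagate the resulting convergence to all of $\mathbb B$.

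The crucial structural observation is the following reduction lemma. If a slice regular self-map $\varphi$ of $\mathbb B$ satisfies $\varphi(\mathbb B_I)\subseteq\mathbb B_I$, then all its Taylor coefficients lie in $\mathbb C_I$ (evaluate $\varphi$ on the real axis and differentiate at the origin), whence $\varphi^{\ast n}|_{\mathbb B_I}=(\varphi|_{\mathbb B_I})^{n}$ as an ordinary power, because $z\in\mathbb C_I$ commutes with all the coefficients. Consequently, for every slice regular $f=\sum_{n\ge0}q^n a_n$ one has $(C_\varphi f)(z)=\sum_{n\ge0}\varphi(z)^n a_n=f(\varphi(z))$ for $z\in\mathbb B_I$; that is, the regular composition $f\odot\varphi=C_\varphi f$ restricts on $\mathbb B_I$ to the ordinary composition $f\circ\varphi$. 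Taking $\varphi=f$ and iterating, an easy induction gives $f^{\odot n}|_{\mathbb B_I}=g^{\circ n}$ with $g=f|_{\mathbb B_I}$, and in particular each $f^{\odot n}$ again preserves $\mathbb B_I$. I expect this lemma to be the main obstacle, since it is exactly the point where the noncommutative $\ast$-product must be shown to collapse to scalar behaviour, and where the global well-definedness established in Section 4 is used.

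It then remains to feed this into the classical theory and to globalize. Since $f$ is a regular M\"obius transformation preserving $\mathbb B_I$, its slice parameters lie in $\mathbb C_I$, so $g$ is a genuine M\"obius automorphism of the disc $\mathbb B_I$; and since $f$ has no fixed point in $\mathbb B$, the automorphism $g$ has no interior fixed point, i.e.\ $g$ is parabolic or hyperbolic but not elliptic, and $g\neq Id$ by the Identity Principle. The classical Denjoy--Wolff theorem then yields a point $\zeta\in\partial\mathbb B_I$, fixed by $g$ in the boundary sense, such that $g^{\circ n}=f^{\odot n}|_{\mathbb B_I}$ converges to the constant $\zeta$ uniformly on compact subsets of $\mathbb B_I$. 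To upgrade this to all of $\mathbb B$, I invoke the Representation Formula: for $x=x_0+Jx_1$ with $J\in\mathbb S$,
\[
f^{\odot n}(x_0+Jx_1)=\tfrac12\bigl(f^{\odot n}(x_0+Ix_1)+f^{\odot n}(x_0-Ix_1)\bigr)+\tfrac{JI}{2}\bigl(f^{\odot n}(x_0-Ix_1)-f^{\odot n}(x_0+Ix_1)\bigr).
\]
As $x$ ranges over a compact subset of $\mathbb B$, the two arguments $x_0\pm Ix_1$ range over a compact subset of $\mathbb B_I$, so both bracketed terms tend to $\zeta$ uniformly; hence the right-hand side tends to the constant $\zeta$ uniformly and $\{f^{\odot n}\}$ converges to $\zeta$ uniformly on compact subsets of $\mathbb B$, with $\zeta\in\partial\mathbb B_I\subseteq\partial\mathbb B$ a boundary fixed point of $f$. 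The only points requiring care beyond the reduction lemma are verifying that $g$ is onto $\mathbb B_I$ (so that it is truly an automorphism) and that the non-elliptic type transfers from $f$ to $g$, both of which follow from the explicit form of the regular M\"obius transformations preserving $\mathbb B_I$.
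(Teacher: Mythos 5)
Your proof is correct and follows essentially the same route as the paper: identify the restricted iterates $f^{\odot n}|_{\mathbb B_I}$ with the classical iterates $(f_I)^n$ (this is exactly the content of the paper's preceding theorem on well-definedness of $f^{\odot n}$, which you re-derive via the coefficient-commutation argument), apply the classical Denjoy--Wolff theorem on the preserved slice, and then propagate the locally uniform convergence to all of $\mathbb B$. Even your globalization step via the Representation Formula is the alternative the paper itself explicitly mentions; its primary argument instead invokes a convex combination identity from Ren--Wang, but the two devices are interchangeable here.
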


The paper consists of eight sections besides the introduction, and its outline is as follows. In Section 2, we set up the basic notations and give the preliminary results. Section 3 is devoted to recalling the Vlacci's regular composition for slice regular functions. In Section 4, we define our new regular composition and investigate its basic properties. In Section 5, we will investigate some results about the dynamical behaviors of the iterates of slice regular self-mappings $f$ of $\mathbb B$ under the extra assumption that $f$ preserves at least one slice. The assumption is satisfied for the slice regular automorphism group
and all the  elementary functions in the slice regular theory.
In Section 6, we establish the Littlewood subordination principle with respect to our new regular composition. The boundedness and compactness of composition operators are considered in Section 7 and Section 8, respectively. Finally, Section 9 comes the conclusion of the paper.

\section{Preliminaries}
We recall in this section some preliminary definitions and results on slice regular functions. Let $\mathbb H$ denote the noncommutative, associative, real algebra of quaternions with standard basis $\{1,\,i,\,j, \,k\}$,  subject to the multiplication rules
$$i^2=j^2=k^2=ijk=-1.$$
 Every element $q=x_0+x_1i+x_2j+x_3k$ in $\mathbb H$ is composed by the \textit{real} part ${\rm{Re}}\, (q)=x_0$ and the \textit{imaginary} part ${\rm{Im}}\, (q)=x_1i+x_2j+x_3k$. The \textit{conjugate} of $q\in \mathbb H$ is then $\bar{q}={\rm{Re}}\, (q)-{\rm{Im}}\, (q)$ and its \textit{modulus} is defined by $|q|^2=q\overline{q}=|{\rm{Re}}\, (q)|^2+|{\rm{Im}}\, (q)|^2$. We can therefore calculate the multiplicative inverse of each $q\neq0$ as $ q^{-1} =|q|^{-2}\overline{q}$.
 Every $q \in \mathbb H $ can be expressed as $q = x + yI$, where $x, y \in \mathbb R$ and
$$I=\frac{{\rm{Im}}\, (q)}{|{\rm{Im}}\, (q)|}$$
 if ${\rm{Im}}\, q\neq 0$, otherwise we take $I$ arbitrarily such that $I^2=-1$.
Then $I $ is an element of the unit 2-sphere of purely imaginary quaternions,
$$\mathbb S=\big\{q \in \mathbb H:q^2 =-1\big\}.$$

For every $I \in \mathbb S $,  we  denote by $\mathbb C_I$ the plane $ \mathbb R \oplus I\mathbb R $, isomorphic to $ \mathbb C$, and, if $\Omega \subseteq \mathbb H$, by $\Omega_I$ the intersection $ \Omega \cap \mathbb C_I $. Also, for $R>0$, we will denote the open ball centred at the origin with radius $R$ by
$$B(0,R)=\big\{q \in \mathbb H:|q|<R\big\}.$$

We can now recall the definition of slice regularity.
\begin{definition} \label{de:  regular} Let $\Omega$ be a domain in $\mathbb H$. A function $f :\Omega \rightarrow \mathbb H$ is called \emph{slice} \emph{regular} if, for all $ I \in \mathbb S$, its restriction $f_I$ to $\Omega_I$ is \emph{holomorphic}, i.e. it has continuous partial derivatives and satisfies
$$\bar{\partial}_I f(x+yI):=\frac{1}{2}\left(\frac{\partial}{\partial x}+I\frac{\partial}{\partial y}\right)f_I (x+yI)=0$$
for all $x+yI\in \Omega_I $.
 \end{definition}
As shown in \cite {Co4}, a class of domains, the so-called symmetric slice domains naturally qualify as  domains of definition  of slice regular functions.
\begin{definition} \label{de:  domain}
Let $\Omega$ be a domain in $\mathbb H $.  $\Omega$ is called a \textit{slice domain}  if $\Omega$ intersects the real axis and $\Omega_I$  is a domain
of $ \mathbb C_I $  for any $I \in \mathbb S $.

Moreover,  if  $x + yI \in \Omega$ implies $x + y\mathbb S \subseteq \Omega $ for any $x,y \in \mathbb R $ and $I\in \mathbb S$, then
 $\Omega$  is called a \textit{symmetric slice domain}.
\end{definition}

From now on, we will omit the term `slice' when referring to slice regular functions. A natural notion of derivative can be given for regular functions as follows (see \cite{GS3,GS4}).
\begin{definition} \label{de: derivative}
Let $\Omega$ be a slice domain in $\mathbb H $, and let $f :\Omega \rightarrow \mathbb H$  be a regular function. The \emph{slice derivative} of $f$ at $q=x+yI$
is defined by
$$\partial_I f(x+yI):=\frac{1}{2}\left(\frac{\partial}{\partial x}-I\frac{\partial}{\partial y}\right)f_I (x+yI).$$
 \end{definition}
Notice that the operators $\partial_I$ and $\bar{\partial}_I $ commute, and $\partial_I f=\frac{\partial f}{\partial x}$ for regular functions. Therefore, the slice derivative of a regular function is still regular so that we can iterate the differentiation to obtain the $n$-th
slice derivative
$$\partial^{n}_I f=\frac{\partial^{n} f}{\partial x^{n}},\quad\,\forall \,\, n\in \mathbb N. $$

In what follows, for the sake of simplicity, we will directly denote the $n$-th slice derivative $\partial^{n}_I f$ by $f^{(n)}$ for every $n\in \mathbb N$.

As stated in \cite{GS4}, a quaternionic power series $\sum\limits_{n=0}^{\infty}q^n a_n$ with $\{a_n\}_{n \in \mathbb N} \subset \mathbb H$ defines a regular function in its domain of convergence, which proves to be an open ball $B(0,R)$ with $R$ equal to the radius of convergence of the power series. The converse result is also true.
\begin{theorem}{\bf(Taylor Expansion)}\label{eq:Taylor}
A function f is regular on $B = B(0,R) $ if and only if $f$ has a power series expansion
$$f(q)=\sum\limits_{n=0}^{\infty}q^n a_n\quad with \quad a_n=\frac{f^{(n)}(0)}{n!}.$$
\end{theorem}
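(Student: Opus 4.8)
The plan is to prove the two implications separately. The easier direction is that a convergent power series is regular with the stated coefficients, and here the regularity is already granted by the recalled fact that $\sum_{n=0}^\infty q^n a_n$ defines a regular function on its ball of convergence; so the only work is to identify the coefficients. The harder direction, that a regular $f$ admits such an expansion, I would reduce to ordinary complex analysis on a single slice by a splitting argument, and then propagate the resulting expansion to the whole ball by a rigidity principle.

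For the first direction, suppose $f(q)=\sum_{n=0}^\infty q^n a_n$ converges on $B(0,R)$. To pin down the coefficients I would differentiate term by term. Since $\partial_I=\partial/\partial x$ on regular functions and the coefficients stand to the right, one has $\partial_I(q^n a_n)=n\,q^{n-1}a_n$, and term-by-term differentiation is legitimate on compact subsets of $B(0,R)$ because the power series converges uniformly there. Iterating $k$ times and evaluating at $q=0$ annihilates every term except the one with $n=k$, which yields $f^{(k)}(0)=k!\,a_k$, that is $a_k=f^{(k)}(0)/k!$.

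For the converse, fix $I\in\mathbb S$ and choose $J\in\mathbb S$ with $IJ=-JI$, so that $\mathbb H=\mathbb C_I\oplus\mathbb C_I J$. Writing the restriction as $f_I=F+GJ$ with $F,G\colon B_I\to\mathbb C_I$, the regularity of $f$ forces $F$ and $G$ to be holomorphic on the disk $B_I$ of $\mathbb C_I$ (the Splitting Lemma). Classical complex power-series theory then gives $F(z)=\sum_n z^n\alpha_n$ and $G(z)=\sum_n z^n\beta_n$ on $B_I$, so that $f_I(z)=\sum_n z^n a_n$ with $a_n:=\alpha_n+\beta_n J\in\mathbb H$. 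By the first direction this series defines a regular function $g$ on $B(0,R)$ agreeing with $f$ on the entire slice $\mathbb C_I$.

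The remaining step, upgrading this slicewise identity to $f=g$ on all of $B(0,R)$, is the one I expect to be the crux. Here I would invoke the Identity Principle for regular functions on the symmetric slice domain $B(0,R)$: two regular functions agreeing on a subset of some slice $\mathbb C_I$ with an accumulation point must coincide everywhere, or equivalently one recovers the off-slice values of $f$ from its values on $\mathbb C_I$ through the Representation Formula. Since $f$ and $g$ agree on the full disk $B_I$, they coincide on $B(0,R)$; applying the coefficient formula from the first direction to $g=f$ then shows the $a_n$ equal $f^{(n)}(0)/n!$ and, in particular, are independent of the auxiliary choices of $I$ and $J$.
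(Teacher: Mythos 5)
The paper never proves this statement: Theorem \ref{eq:Taylor} is recalled as a preliminary from \cite{GS4} (``As stated in \cite{GS4}\dots The converse result is also true''), so there is no internal proof to compare against. Your argument is correct and is essentially the standard one from the literature. The easy direction (term-by-term slice differentiation, using $\partial_I=\partial/\partial x$ and $\partial_I(q^na_n)=nq^{n-1}a_n$, to get $a_k=f^{(k)}(0)/k!$) is fine. In the converse direction your chain --- Splitting Lemma, complex Taylor expansions $F(z)=\sum_n z^n\alpha_n$, $G(z)=\sum_n z^n\beta_n$ on $B_I$, assembly of the quaternionic series with $a_n=\alpha_n+\beta_nJ$, then the Identity Principle (or Representation Formula) to upgrade equality on one slice to equality on $B(0,R)$ --- is valid and non-circular, since the Identity Principle as stated in the paper rests only on the Splitting Lemma and the slice-domain structure, not on Taylor expansions. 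Two small points are left implicit and deserve one line each: (i) the quaternionic series actually converges on all of $B(0,R)$, which follows from $|a_n|\leq|\alpha_n|+|\beta_n|$ and the root test, so that $\limsup_n|a_n|^{1/n}\leq 1/R$; (ii) your closing remark on independence of $I$ is what the original proof in \cite{GS4} exploits \emph{instead} of your rigidity step: there one observes directly that the slice Taylor coefficients equal $f^{(n)}(0)/n!$, computed along the real axis and hence the same on every slice, so the expansion holds on all slices simultaneously with no appeal to the Identity Principle. Your route trades that observation for a rigidity theorem; both are correct, and yours is slightly more robust in spirit (it is the same one-slice-determines-all mechanism the paper itself uses via the Extension Lemma).
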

We can recover the values of a regular function on a symmetric slice domain from its values on a single slice $ \mathbb C_I $, due to the following representation formula (a special case of a result in \cite{Co4}), which was proven in \cite{CGS}.
\begin{theorem}{\bf(Representation Formula)}\label{eq:formula}
Let $ f $ be a regular function on a symmetric slice domain $\Omega \subseteq \mathbb H $ and let $I \in \mathbb S.$ Then for all $ x+yJ \in \Omega $ with $J \in \mathbb S$, the following equality holds
$$ f(x+yJ)=\frac{1}{2}\Big(f(x+yI)+f(x-yI)\Big)+\frac{1}{2}JI\Big(f(x-yI)-f(x+yI)\Big).$$
\end{theorem}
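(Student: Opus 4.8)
The plan is to verify the formula by introducing the right-hand side as an auxiliary function and showing it coincides with $f$. Fix $I\in\mathbb S$ and define $g:\Omega\to\mathbb H$ by
\[
g(x+yJ)=\tfrac12\big(f(x+yI)+f(x-yI)\big)+\tfrac12\,JI\big(f(x-yI)-f(x+yI)\big).
\]
First I would note that $g$ is well defined on all of $\Omega$: since $\Omega$ is a symmetric slice domain, $x+yJ\in\Omega$ forces $x+y\mathbb S\subseteq\Omega$, so in particular $x\pm yI\in\Omega$ and the right-hand side makes sense. The strategy is then to establish three facts: that $g$ is regular on $\Omega$, that $g$ agrees with $f$ on the single slice $\mathbb C_I$, and finally to invoke the identity principle for regular functions to conclude $g\equiv f$ on $\Omega$.

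For the regularity I would write $F(x,y):=f(x+yI)$, so that $f(x-yI)=F(x,-y)$, and set
\[
\alpha(x,y)=\tfrac12\big(F(x,y)+F(x,-y)\big),\qquad \beta(x,y)=\tfrac12\,I\big(F(x,-y)-F(x,y)\big).
\]
Then on each slice $\mathbb C_J$ one has $g(x+yJ)=\alpha(x,y)+J\,\beta(x,y)$, and since $\alpha,\beta$ are real-variable combinations of $F$ they inherit continuous partials. Applying $\bar\partial_J=\tfrac12(\partial_x+J\partial_y)$ and collecting the terms with and without the left factor $J$, the equation $\bar\partial_J g=0$ reduces to the two identities $\partial_x\alpha-\partial_y\beta=0$ and $\partial_x\beta+\partial_y\alpha=0$. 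Both follow from the holomorphicity of $f_I$, which in these coordinates reads $F_x(x,t)+I\,F_y(x,t)=0$ for every $(x,t)$: after differentiating $\alpha,\beta$ (being careful with the chain rule in the $F(x,-y)$ terms, which produces a sign on $\partial_y$) and substituting $F_x=-I\,F_y$, the two identities collapse to $0$. Hence $g$ is regular on $\Omega$.

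It remains to check the data on a slice and conclude. Setting $J=I$ and using $I^2=-1$ gives $g(x+yI)=f(x+yI)$ for all real $x,y$ (the case $J=-I$, which covers the conjugate points, is identical), so $g$ and $f$ coincide on the whole slice $\Omega_I=\mathbb C_I\cap\Omega$. Since $\Omega$ is a slice domain, $\Omega_I$ is a domain in $\mathbb C_I$ with accumulation points, so the difference $f-g$ is a regular function vanishing on a full slice; by the identity principle for regular functions it vanishes identically, which yields $f=g$ on $\Omega$. I expect the main obstacle to be the regularity computation in the second step: the algebra is routine, but one must handle the noncommutativity of $J$ with the quaternionic values of $F$ together with the sign flips coming from the $F(x,-y)$ terms carefully. (On a ball one could instead argue directly from the Taylor expansion of Theorem~\ref{eq:Taylor}, using that each $q^n$ with $q=x+yJ$ is affine in $J$, namely $q^n=A_n(x,y)+JB_n(x,y)$ with real $A_n,B_n$, and solving for the coefficients from the values at $J=\pm I$; the argument above is the version valid on an arbitrary symmetric slice domain.)
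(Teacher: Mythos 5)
Your proof is correct, but note that the paper itself does not prove this statement at all: the Representation Formula is imported as a preliminary, quoted as ``a special case of a result in \cite{Co4}, which was proven in \cite{CGS}''. So the only meaningful comparison is with the standard proof in those sources, and your argument is essentially that proof: define $g$ as the right-hand side, check slice-by-slice holomorphy via a Cauchy--Riemann computation using $F_x=-IF_y$, observe $g=f$ on $\Omega_I$ (taking $J=\pm I$), and invoke the Identity Principle (Theorem \ref{th:IP-theorem}) on the slice domain $\Omega$. Two small points deserve explicit mention. First, $g$ must be shown to be well defined as a function of the point $q$, not of the representation $q=x+yJ$: since $q=x+yJ=x+(-y)(-J)$, you need the consistency check, which holds because $\alpha(x,-y)=\alpha(x,y)$ and $\beta(x,-y)=-\beta(x,y)$, so $\alpha(x,-y)+(-J)\beta(x,-y)=\alpha(x,y)+J\beta(x,y)$; your remark about $x\pm yI\in\Omega$ only addresses that the formula makes sense, not that it is unambiguous. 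Second, your phrase ``$\bar\partial_J g=0$ reduces to the two identities'' should be ``it suffices to prove the two identities'': since $\alpha,\beta$ are $\mathbb H$-valued, the decomposition $(\alpha_x-\beta_y)+J(\beta_x+\alpha_y)$ is not a direct-sum splitting and the converse implication can fail pointwise, but only sufficiency is needed, so the proof stands. Your parenthetical Taylor-series argument on a ball (using $q^n=A_n(x,y)+JB_n(x,y)$ with $A_n,B_n$ real and solving from the values at $J=\pm I$) is also correct, and is the reason the formula is sometimes stated first for power series; the slice-wise argument you give is the one that works on general symmetric slice domains.
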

A fundamental result in the theory of regular functions is described by the splitting lemma (see \cite{GS4}), which relates slice regularity to classical holomorphy.
\begin{lemma}{\bf(Splitting Lemma)}\label{eq:Splitting}
Let $f$ be a regular function on a slice domain $\Omega \subseteq \mathbb H $. Then for any $I\in \mathbb S$ and any $J\in \mathbb S$ with $J\perp I$, there exist two holomorphic functions $F,G:\Omega_I\rightarrow \mathbb C_I$ such that for every $z=x+yI\in \Omega_I $, the following equality holds
$$f_I(z)=F(z)+G(z)J.$$
\end{lemma}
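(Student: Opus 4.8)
The plan is to exploit the fact that $\{1,I,J,IJ\}$ is a real basis of $\mathbb H$, which turns the single quaternionic condition $\bar{\partial}_I f_I=0$ into a pair of scalar Cauchy--Riemann equations. First I would fix $I\in\mathbb S$ and $J\in\mathbb S$ with $J\perp I$, and record the purely algebraic fact that $\{1,I,J,IJ\}$ is an orthonormal $\mathbb R$-basis of $\mathbb H$. Equivalently, every $q\in\mathbb H$ admits a \emph{unique} decomposition $q=p_1+p_2J$ with $p_1,p_2\in\mathbb C_I$, so that $\mathbb H=\mathbb C_I\oplus\mathbb C_I J$ as real vector spaces. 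This uniqueness is what will later let me separate the two components.

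Next I would apply this decomposition pointwise to the restriction $f_I:\Omega_I\to\mathbb H$: for each $z\in\Omega_I$ write $f_I(z)=F(z)+G(z)J$, where $F,G:\Omega_I\to\mathbb C_I$ are the $\mathbb C_I$-component and the $\mathbb C_I J$-component of $f_I$. Since $f_I$ has continuous partial derivatives by the definition of slice regularity, so do $F$ and $G$.

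The core computation is to insert $f_I=F+GJ$ into the operator $\bar{\partial}_I=\tfrac12\left(\partial_x+I\,\partial_y\right)$. Here I would use that $F$ and $G$ take values in the commutative plane $\mathbb C_I$, so $I$ commutes with $\partial_y F$ and $\partial_y G$, while the constant $J$ factors out on the right. This yields
$$\bar{\partial}_I f_I=\frac12\left(\partial_x F+I\,\partial_y F\right)+\frac12\left(\partial_x G+I\,\partial_y G\right)J.$$
Because the two summands lie in the complementary summands $\mathbb C_I$ and $\mathbb C_I J$, the hypothesis $\bar{\partial}_I f_I=0$ forces each of them to vanish separately, i.e.\ $\partial_x F+I\,\partial_y F=0$ and $\partial_x G+I\,\partial_y G=0$. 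Identifying $\mathbb C_I$ with $\mathbb C$ via $I\mapsto i$, these are precisely the Cauchy--Riemann equations, so $F$ and $G$ are holomorphic on $\Omega_I$, which completes the argument.

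The argument is essentially routine; the only point requiring genuine care is the bookkeeping of the noncommutative multiplication. Specifically, one must check that $I$ may indeed be passed through the $\mathbb C_I$-valued derivatives $\partial_y F,\partial_y G$ (valid because $\mathbb C_I$ is commutative) while keeping the factor $J$ on the right, and that the splitting $\mathbb H=\mathbb C_I\oplus\mathbb C_I J$ genuinely isolates the two resulting equations — this last point is exactly where the orthogonality $J\perp I$ enters. I do not anticipate any deeper obstacle.
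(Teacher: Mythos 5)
Your proof is correct: the decomposition $\mathbb H=\mathbb C_I\oplus\mathbb C_I J$ coming from the real basis $\{1,I,J,IJ\}$, the componentwise computation of $\bar{\partial}_I(F+GJ)$ with $J$ kept on the right, and the separation of the result into two Cauchy--Riemann systems constitute precisely the standard argument for this lemma. The paper itself does not prove the Splitting Lemma --- it recalls it as a known preliminary result from \cite{GS4} --- and your argument matches the proof given in that cited source, so there is nothing to flag.
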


The pointwise product of two regular functions is not, in general, regular. To maintain the regularity, a new multiplication operation, the $\ast$-product, was introduced. On open balls centered at the origin, the $\ast$-product of two regular functions can be defined by means of their power series expansions in \cite{GS5}, mimicking the standard multiplication of polynomials in a skew field. However,  the generalization to the symmetric slice domains is based on the extension lemma (see \cite{Co4}).
\begin{lemma}{\bf(Extension Lemma)}\label{eq:Extension}
Let $\Omega$ be a symmetric slice domain and choose $I\in \mathbb S$. If $f_I:\Omega_I \rightarrow \mathbb H $ is holomorphic, then setting
$$ f(x+yJ)=\frac{1}{2}\Big(f_I(x+yI)+f_I(x-yI)\Big)+\frac{1}{2}JI \Big(f_I(x-yI)-f_I(x+yI)\Big)$$
extends $f_I$ to a regular function $f:\Omega \rightarrow \mathbb H $. Moreover, $f$ is the unique extension and it is denoted by {\rm{ext}}($f_I$).
\end{lemma}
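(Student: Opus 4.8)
The plan is to establish four things in turn: that the displayed formula is well defined, that the resulting $f$ restricts to $f_I$ on $\Omega_I$, that $f$ is regular on all of $\Omega$, and that it is the unique such extension. For well-definedness, observe that a point $q\in\Omega$ with nonzero imaginary part admits the two representations $q=x+yJ=x+(-y)(-J)$, so I must check that the right-hand side is invariant under the substitution $(y,J)\mapsto(-y,-J)$: under it the first (symmetric) summand is unchanged, while in the second summand the factor $JI\mapsto-JI$ is exactly compensated by the swap $f_I(x-yI)-f_I(x+yI)\mapsto f_I(x+yI)-f_I(x-yI)$, so the whole expression is unchanged; for a real point ($y=0$) it collapses to $f_I(x)$, independent of $J$. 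Here the hypothesis that $\Omega$ is a \emph{symmetric} slice domain is precisely what guarantees that $x+yJ\in\Omega$ forces $x+y\mathbb S\subseteq\Omega$, so that both $x+yI$ and $x-yI$ lie in $\Omega_I$ and the formula is meaningful. Setting $J=I$ and using $I^2=-1$ then gives $f(x+yI)=f_I(x+yI)$, i.e. $f|_{\Omega_I}=f_I$.

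The heart of the argument is to verify regularity, i.e. $\bar\partial_J f=0$ on $\Omega_J$ for every $J\in\mathbb S$. Writing $g(x,y)=f_I(x+yI)$, the holomorphy of $f_I$ is exactly the Cauchy--Riemann relation $\partial_y g=I\,\partial_x g$ (left multiplication by $I$). I would abbreviate the defining formula as $f(x+yJ)=u(x,y)+(JI)\,v(x,y)$, where $u=\tfrac12(g(x,y)+g(x,-y))$ and $v=\tfrac12(g(x,-y)-g(x,y))$ are the even and odd parts of $g$ in the variable $y$. Differentiating these and invoking the Cauchy--Riemann relation yields the two identities $\partial_y u=-I\,\partial_x v$ and $\partial_y v=-I\,\partial_x u$. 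Substituting them into
\[
\bar\partial_J f=\tfrac12\big(\partial_x f+J\,\partial_y f\big)=\tfrac12\big(\partial_x u+(JI)\partial_x v+J\,\partial_y u-I\,\partial_y v\big),
\]
and using $J\cdot JI=J^2I=-I$ together with $I^2=-1$, the four terms cancel in pairs and the expression vanishes. Continuity of the partial derivatives is inherited from the smoothness of $f_I$, so $f$ is genuinely regular in the sense of Definition~\ref{de: regular}.

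I expect the main obstacle to be bookkeeping rather than anything conceptual: because $I$ and $J$ do not commute, one must consistently keep every quaternionic unit on the correct side throughout the differentiation, and it is precisely the left-multiplication form of the Cauchy--Riemann relation that makes the cancellation go through. Finally, uniqueness is immediate from the Representation Formula (Theorem~\ref{eq:formula}): any regular function $\tilde f$ on the symmetric slice domain $\Omega$ with $\tilde f|_{\Omega_I}=f_I$ must itself satisfy that representation formula expressing its values on $\Omega_J$ through its values on $\Omega_I$, and this is exactly the formula defining $f$; hence $\tilde f=f$, which also justifies the notation $\mathrm{ext}(f_I)$.
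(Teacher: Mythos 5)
Your proof is correct. One caveat on the comparison: the paper does not prove this lemma at all --- it is recalled as a preliminary and attributed to the reference \cite{Co4} --- so there is no internal proof to measure against; your argument is, in substance, the standard one from that literature. The three ingredients are all sound: (i) well-definedness via invariance of the formula under $(y,J)\mapsto(-y,-J)$, with the symmetry of $\Omega$ guaranteeing $x\pm yI\in\Omega_I$; (ii) regularity by direct differentiation, where your two identities $\partial_y u=-I\,\partial_x v$ and $\partial_y v=-I\,\partial_x u$ follow correctly from the left Cauchy--Riemann relation $\partial_y g=I\,\partial_x g$ (note $I^{-1}=-I$), and the four terms in $\bar\partial_J f=\tfrac12\big(\partial_x u+JI\,\partial_x v+J\,\partial_y u-I\,\partial_y v\big)$ indeed cancel in pairs using $J(JI)=-I$ and $(-I)(-I)=-1$; and (iii) uniqueness from the Representation Formula, which is legitimate here since that formula is stated (Theorem \ref{eq:formula}) for regular functions on symmetric slice domains independently of, and prior to, the Extension Lemma, so there is no circularity. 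Your closing remark about keeping all quaternionic units on the left is exactly the point at which a careless argument would fail, and you have handled it consistently.
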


In order to define the regular product of $f$ and $g$, which are regular functions on a symmetric slice domain $\Omega \subseteq \mathbb H$,
we apply the splitting lemma to write
 $$f_I(z)=F(z)+G(z)J, \qquad g_I(z)=H(z)+K(z)J,$$
where   $F,G,H,K:\Omega_I \rightarrow \mathbb C_I $ are  holomorphic functions
and  $I,J\in\mathbb S$ with $I\perp J$.

Let $f_I\ast g_I:\Omega_I \rightarrow \mathbb C_I $ be the holomorphic function defined by
$$f_I\ast g_I(z)=\Big(F(z)H(z)-G(z) \overline{K(\bar{z})}\Big)+\Big(F(z)K(z)+G(z)\overline{H(\bar{z})}\Big)J.$$
The regular extension of  $f_I\ast g_I(z)$ is defined to be the regular product (see \cite{Co4}).
\begin{definition} \label{de: Extension}
Let $f,g$ be two regular functions on a symmetric slice domain $\Omega \subseteq \mathbb H $. The \emph{regular product} (or \emph{$\ast$-product}) of $f$ and $g$ is the function defined by
$$f \ast g(q)={\rm{ext}} (f_I\ast g_I)(q)$$
regular on $\Omega$.
\end{definition}
Notice that the $\ast$-product is associative and is not, in general, commutative. It can be described in terms of the usual pointwise product (see \cite{Co4,GS5}):

\begin{proposition} \label{prop:RP}
Let $f$ and $g$ be two regular functions on a symmetric slice domain $\Omega \subseteq \mathbb H $. Then for all $q\in \Omega$,
$$f\ast g(q)=
\left\{
\begin{array}{lll}
f(q)g(f(q)^{-1}qf(q)) \qquad \,\,if \qquad f(q)\neq 0;
\\
\qquad  \qquad  0\qquad  \qquad \qquad if \qquad f(q)=0.
\end{array}
\right.
$$
\end{proposition}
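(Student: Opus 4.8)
The plan is to reduce the global identity to a one-slice computation and then verify it by direct quaternionic algebra. Fix an arbitrary point $q_0 = x + yI \in \Omega$ with $I \in \mathbb{S}$ (choosing $I$ arbitrarily if $q_0$ is real). Since the $\ast$-product is defined slicewise through the Extension Lemma (Lemma~\ref{eq:Extension}), the restriction of $f \ast g$ to $\Omega_I$ is exactly the holomorphic function $f_I \ast g_I$, and this is independent of the auxiliary choice of $J \perp I$. Hence it suffices to prove, for every $I \in \mathbb{S}$ and every $z = x+yI \in \Omega_I$, the slice identity $f_I \ast g_I(z) = f(z)\,g\big(f(z)^{-1} z f(z)\big)$ when $f(z) \neq 0$, and $f_I \ast g_I(z) = 0$ when $f(z) = 0$; as $q_0$ and $I$ are arbitrary, this gives the proposition on all of $\Omega$. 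Writing $f_I(z) = F(z) + G(z)J$ via the Splitting Lemma (Lemma~\ref{eq:Splitting}), the case $f(z) = 0$ is immediate: then $F(z) = G(z) = 0$, because $\{1, J\}$ is a basis of $\mathbb{H}$ over $\mathbb{C}_I$, so $f_I \ast g_I(z) = 0$ directly from its definition.

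So assume $f(z) \neq 0$ and set $w := f(z)^{-1} z f(z)$. The key elementary observations are that conjugation by $f(z)$ fixes the real part and preserves the modulus, so $w = x + yL$ with $L := f(z)^{-1} I f(z) \in \mathbb{S}$, and, crucially, that $f(z) L = I f(z)$. The first lets me expand $g(w) = g(x+yL)$ through the Representation Formula (Theorem~\ref{eq:formula}) with base slice $\mathbb{C}_I$, namely $g(x+yL) = \tfrac12\big(g(z) + g(\bar z)\big) + \tfrac12 L I \big(g(\bar z) - g(z)\big)$ with $\bar z = x - yI$; the second yields the simplification $f(z) L I = I f(z) I$, which removes the dependence of the awkward factor $LI$ on the unknown $L$ and leaves only $f(z)$ sandwiched by $I$.

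It then remains to substitute the splittings $g(z) = H(z) + K(z)J$ and $g(\bar z) = H(\bar z) + K(\bar z)J$, together with $f(z) = F + GJ$, and to expand using the two defining relations $J^2 = -1$ and $J\alpha = \bar\alpha J$ for $\alpha \in \mathbb{C}_I$. A short computation gives $I f(z) I = -F + GJ$, after which collecting the coefficients of $1$ and of $J$ in $f(z)\, g(x+yL)$ produces exactly $\big(FH - G\,\overline{K(\bar z)}\big) + \big(FK + G\,\overline{H(\bar z)}\big)J$, which is the definition of $f_I \ast g_I(z)$. I expect this last expansion to be the only real obstacle: it is the bottleneck where the anticommutation bookkeeping (tracking when a factor in $\mathbb{C}_I$ must be conjugated as it passes $J$, and the cancellations producing the conjugates $\overline{H(\bar z)}$ and $\overline{K(\bar z)}$) must be carried out with care, though it is entirely routine.

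Finally, it is worth recording the conceptual reason behind the identity, which also furnishes a quick independent proof on balls $B(0,R)$ centred at the origin. There $f(q) = \sum_n q^n a_n$ and $g(q) = \sum_n q^n b_n$ by the Taylor expansion (Theorem~\ref{eq:Taylor}), and since $w = f(q)^{-1} q f(q)$ is conjugate to $q$ one has $w^n = f(q)^{-1} q^n f(q)$ for all $n$; hence $f(q) g(w) = \sum_n f(q) f(q)^{-1} q^n f(q) b_n = \sum_n q^n f(q) b_n = \sum_{n,k} q^{n+k} a_k b_n$, whose $q^m$-coefficient is $\sum_{k=0}^m a_k b_{m-k}$, the $\ast$-product coefficient. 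This power-series computation is transparent but local; the slice argument above is what upgrades it to an arbitrary symmetric slice domain.
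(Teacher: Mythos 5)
Your proof is correct, and you should know that the paper itself contains no proof of this proposition: it is quoted as background, with a pointer to \cite{Co4,GS5}. The argument in those sources is essentially your closing remark --- the power-series identity $f(q)\,g\bigl(f(q)^{-1}qf(q)\bigr)=\sum_{n,k}q^{n+k}a_kb_n$ on balls $B(0,R)$, then transported to symmetric slice domains. Your main argument is a genuinely different, and nicer, route: I checked the algebra and it is sound --- $If(z)I=-F+GJ$, the two terms of the Representation Formula (Theorem~\ref{eq:formula}) collapse to $f(z)\,g(w)=F\,g_I(z)+GJ\,g_I(\bar z)$, and the relation $J\alpha=\bar\alpha J$ for $\alpha\in\mathbb C_I$ then reproduces exactly $\bigl(FH(z)-G\,\overline{K(\bar z)}\bigr)+\bigl(FK(z)+G\,\overline{H(\bar z)}\bigr)J$, which is the defining expression of $f_I\ast g_I(z)$. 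What your approach buys is a self-contained pointwise proof valid directly on an arbitrary symmetric slice domain, with no series expansion and no separate extension step; the price is that it uses the Representation Formula at the rotated point $x+yL$, $L=f(z)^{-1}If(z)$, which is precisely where the symmetry of $\Omega$ enters.

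One step deserves to be made explicit. You assert that for \emph{every} $I\in\mathbb S$ the restriction of $f\ast g$ to $\Omega_I$ is $f_I\ast g_I$. The paper's definition produces $f\ast g$ as $\mathrm{ext}(f_{I_0}\ast g_{I_0})$ for one fixed slice $I_0$; that its restriction to a \emph{different} slice $\Omega_I$ is again $f_I\ast g_I$ is the statement that the $\ast$-product does not depend on the chosen slice. This is standard (it is part of what \cite{Co4} establishes), so invoking it is legitimate, but it is not the same issue as independence of the auxiliary $J\perp I$, which is what your sentence names. If you prefer not to cite it, your own computation closes the gap: writing $\Phi$ for the right-hand side of the proposition, you have shown $f_I\ast g_I=\Phi|_{\Omega_I}$ for every $I$; on $\Omega\cap\mathbb R$ (nonempty and open in $\mathbb R$, since $\Omega$ is a slice domain) every such restriction equals the pointwise product $fg$, so $\mathrm{ext}(f_{I_0}\ast g_{I_0})$ and $\mathrm{ext}(f_I\ast g_I)$ agree there and hence coincide on all of $\Omega$ by the Identity Principle (Theorem~\ref{th:IP-theorem}); consequently $(f\ast g)|_{\Omega_I}=f_I\ast g_I=\Phi|_{\Omega_I}$ for every $I$, which is the proposition at every point of $\Omega$.
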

\begin{corollary}
Let $f$ and $g$ be two regular functions on a symmetric slice domain $\Omega \subseteq \mathbb H $ and let $q\in \mathbb H$. Then $f\ast g(q)=0$ if and only if $f(q)=0$ or $f(q)\neq0 \,\,and\,\, g(f(q)^{-1}qf(q))=0$.
\end{corollary}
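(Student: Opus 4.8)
The plan is to derive the corollary directly from Proposition \ref{prop:RP}, the only substantive ingredient being that $\mathbb H$ is a division algebra and hence has no zero divisors. Throughout I take $q\in\Omega$ so that both sides are meaningful (the corollary, as stated with $q\in\mathbb H$, should be read with $q$ in the common domain $\Omega$).

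First I would record a domain remark, so that the quantity $g(f(q)^{-1}qf(q))$ in the statement is legitimate. The argument $f(q)^{-1}qf(q)$ is a conjugate of $q=x+yI$ by the nonzero quaternion $f(q)$; such conjugation fixes the real part $x$ and preserves the modulus of the imaginary part, so $f(q)^{-1}qf(q)\in x+y\mathbb S$. Since $\Omega$ is a symmetric slice domain, $q\in\Omega$ forces $x+y\mathbb S\subseteq\Omega$, and hence $g$ is evaluated at a genuine point of its domain.

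For the forward implication, I would suppose $f\ast g(q)=0$. If $f(q)=0$, the first alternative holds and we are done. Otherwise $f(q)\neq0$, and Proposition \ref{prop:RP} gives $f\ast g(q)=f(q)\,g(f(q)^{-1}qf(q))$. As this product vanishes while $f(q)\neq0$, the absence of zero divisors in $\mathbb H$ forces $g(f(q)^{-1}qf(q))=0$, which is precisely the second alternative.

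Conversely, if $f(q)=0$ then Proposition \ref{prop:RP} yields $f\ast g(q)=0$ immediately; and if $f(q)\neq0$ with $g(f(q)^{-1}qf(q))=0$, the same proposition gives $f\ast g(q)=f(q)\cdot 0=0$. This closes the equivalence. I do not expect any genuine obstacle: the whole statement is a logical unpacking of the case-defined formula in Proposition \ref{prop:RP}, and the single point worth flagging is the use of the division-ring structure of $\mathbb H$ in the forward direction, which is what rules out a nontrivial cancellation $f(q)\,g(f(q)^{-1}qf(q))=0$ with both factors nonzero.
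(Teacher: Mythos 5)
Your proof is correct and takes exactly the approach the paper intends: the corollary is stated there without proof as an immediate consequence of Proposition \ref{prop:RP}, and your case analysis together with the absence of zero divisors in $\mathbb H$ is precisely that consequence spelled out. Your preliminary remark that $f(q)^{-1}qf(q)$ lies on the sphere $x+y\mathbb S\subseteq\Omega$ (so that $g$ is evaluated at a point of its domain) is a worthwhile detail that the paper leaves implicit.
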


The theory of zero  set of regular functions depends heavily on its conjugation and symmetrization.

\begin{definition} \label{de: R-conjugate}
Let $f$ be a regular function on a symmetric slice domain $\Omega \subseteq \mathbb H $ and suppose that $$f_I(z)=F(z)+G(z)J$$
is the splitting of $f$,  where  $I, J\in\mathbb S$ and  $I\perp J$.

Consider the holomorphic function
$$f^c_I(z)=\overline{F(\bar z)}-G(z)J.$$
The \emph{regular conjugate} of $f$ is the function defined by
$$f^c(q)={\rm{ext}}(f^c_I)(q).$$
The \emph{symmetrization} of $f$ is the function defined by
$$f^s(q)=f\ast f^c(q)=f^c\ast f(q).$$
Both $f^c$ and $f^s$ are regular functions on $\Omega$.
\end{definition}
 The relation among the zero sets of $f$, $f^c$ and $f^s$ has been fully characterized (see \cite{GS5,GS6}):
\begin{proposition}\label{prop:zero set}
Let $f$ be a regular function on a symmetric slice domain $\Omega$. For all $x$, $y\in \mathbb R$ with $x+y\mathbb S \subseteq \Omega$, the zeros of the regular conjugate $f^c$ on $x+y\mathbb S$ are in one-to-one correspondence with those of $f$. Moreover, the symmetrization $f^s$ vanishes exactly on the sets $x+y\mathbb S$ on which $f$ has a zero.
\end{proposition}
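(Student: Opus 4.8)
The plan is to fix $I\in\mathbb S$ and $J\in\mathbb S$ with $J\perp I$, write the splitting $f_I(z)=F(z)+G(z)J$, and compute $f^c$ and $f^s$ explicitly on the slice $\mathbb C_I$. Since $f^c$ splits as $f^c_I(z)=\overline{F(\bar z)}-G(z)J$, a direct application of the slice formula for the $\ast$-product to $f^s=f\ast f^c$ collapses the $J$-component (by commutativity in $\mathbb C_I$) and yields
\[f^s_I(z)=F(z)\overline{F(\bar z)}+G(z)\overline{G(\bar z)}.\]
First I would record two structural features: this value lies in $\mathbb C_I$, so $f^s$ is slice preserving, and it satisfies the reality relation $\overline{f^s_I(\bar z)}=f^s_I(z)$. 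Feeding these into the Representation Formula (Theorem~\ref{eq:formula}) shows that if $f^s_I(x+yI)=a+bI$ with $a,b\in\mathbb R$, then $f^s(x+yK)=a+bK$ for every $K\in\mathbb S$. As $a+bK=0$ forces $a=b=0$, this proves the dichotomy that $f^s$ either vanishes at no point of a sphere $x+y\mathbb S$ or vanishes identically on it; in particular $f^s(x+yI)=0$ already detects vanishing on the whole sphere.

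Next I would establish the companion fact that on any sphere $x+y\mathbb S$ a regular function has either no zero, a single zero, or the whole sphere as its zero set. This follows by writing the Representation Formula as $f(x+yK)=\tfrac12(A+B)+\tfrac12 KI(B-A)$ with $A=f(x+yI)$, $B=f(x-yI)$: two distinct zeros $x+yK_1$, $x+yK_2$ would force $(K_1-K_2)I(B-A)=0$, hence $A=B=0$ by the division-ring property of $\mathbb H$, and then $f\equiv0$ on the sphere.

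The heart of the argument is the chain of equivalences, for a fixed sphere $S=x+y\mathbb S$,
\[f\text{ has a zero on }S\iff f^s\equiv 0\text{ on }S\iff f^c\text{ has a zero on }S.\]
The forward implications are immediate from Proposition~\ref{prop:RP}: if $f(q_0)=0$ then $f^s(q_0)=f\ast f^c(q_0)=0$, and likewise $f^c(q_0)=0$ forces $f^s(q_0)=f^c\ast f(q_0)=0$, after which the dichotomy of the first paragraph spreads the vanishing of $f^s$ over all of $S$. The reverse implication $f^s\equiv0$ on $S\Rightarrow f$ has a zero on $S$ is where I expect the main obstacle. The natural move is to apply the corollary to Proposition~\ref{prop:RP} to $f^s=f^c\ast f$ at $x+yI$, concluding that $f$ vanishes at the conjugate point $f^c(x+yI)^{-1}(x+yI)f^c(x+yI)$, which lies on $S$; but this is legitimate only when $f^c(x+yI)\neq0$. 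To dispose of the degenerate case I would invoke the second paragraph: either $f^c$ vanishes identically on $S$, in which case $f(x\pm yI)=0$ and $f\equiv0$ on $S$ outright, or $f^c$ has at most one zero on $S$, so one may choose $K\in\mathbb S$ with $f^c(x+yK)\neq0$ and run the conjugation argument at $x+yK$ instead, using that $f^s(x+yK)=0$ by the dichotomy.

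Finally I would assemble the one-to-one correspondence sphere by sphere. Since $(f^c)^c=f$ and $(f^c)^s=f\ast f^c=f^s$, the equivalences above are symmetric in $f$ and $f^c$, so on each $S$ the two functions share the same type of zero set: empty, a single point, or all of $S$. In the spherical case both vanish identically on $S$; in the isolated case each has exactly one zero on $S$, which fixes the bijection; and the empty case is vacuous. This yields the asserted correspondence between the zeros of $f^c$ and those of $f$, while the equivalence $f^s\equiv0$ on $S\iff f$ has a zero on $S$ is exactly the statement that $f^s$ vanishes precisely on the spheres carrying a zero of $f$.
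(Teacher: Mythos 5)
The paper itself does not prove this proposition: it is recalled in Section 2 as a known result and attributed to Gentili--Stoppato \cite{GS5,GS6}, so there is no internal proof to measure your argument against; what follows assesses your proof on its own terms. Your proof is correct. The computation $f^s_I(z)=F(z)\overline{F(\bar z)}+G(z)\overline{G(\bar z)}$ is right, and it does yield both the $\mathbb C_I$-valuedness and the reality relation $\overline{f^s_I(\bar z)}=f^s_I(z)$, from which the Representation Formula \ref{eq:formula} gives $f^s(x+yK)=a+bK$ and hence the all-or-nothing dichotomy for $f^s$ on each sphere. The trichotomy for zero sets of a regular function on a sphere, the application of the corollary to Proposition \ref{prop:RP} to $f^s=f^c\ast f$, and the handling of the degenerate case $f^c(x+yI)=0$ are all sound. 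Two points are asserted more tersely than they deserve, though neither is a genuine gap: (i) the claim that $f^c\equiv 0$ on $S=x+y\mathbb S$ forces $f(x\pm yI)=0$ requires the one-line splitting computation --- $f^c_I(z)=\overline{F(\bar z)}-G(z)J$ vanishing at both $x+yI$ and $x-yI$ gives $F(x\mp yI)=0$ and $G(x\pm yI)=0$, hence $f_I(x\pm yI)=0$; (ii) the final claim that $f$ and $f^c$ ``share the same type of zero set'' does not follow from your displayed equivalences alone, since those only distinguish empty from nonempty zero sets; it does follow once (i) and its symmetric counterpart (via the involution $(f^c)^c=f$) are in place, which settle the spherical case and hence, by elimination, the isolated case. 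With those two sentences written out, the proof is complete and pleasantly economical: everything is derived from the Splitting Lemma \ref{eq:Splitting}, the Representation Formula \ref{eq:formula}, and Proposition \ref{prop:RP}, i.e., exactly the toolkit this paper recalls in Section 2.
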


Similarly, there is a close relation between the norms of $f$ and $f^c$,  when restricted to a sphere $x+y\mathbb S$ (see \cite{Sarfatti}):

\begin{proposition}\label{prop:conjugate}
Let $f$ be a regular function on the unit ball $\mathbb B=B(0,1)$. For any sphere of the form $x+y\mathbb S$ contained in $\mathbb B$, the following equalities hold true
$$\sup\limits_{I\in\mathbb S}\big|f(x+yI)\big|=\sup\limits_{I\in\mathbb S}\big|f^c(x+yI)\big|, \qquad \inf\limits_{I\in\mathbb S}\big|f(x+yI)\big|=\inf\limits_{I\in\mathbb S}\big|f^c(x+yI)\big|.$$
\end{proposition}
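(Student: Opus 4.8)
The plan is to fix a single sphere $x+y\mathbb{S}\subset\mathbb{B}$ and reduce both identities to showing that two symmetric functions of the extreme values of $|f|$ on that sphere are unchanged when $f$ is replaced by $f^c$. Write $\alpha=f(x+yI)$ and $\beta=f(x-yI)$. By the Representation Formula (Theorem \ref{eq:formula}), every value of $f$ on the sphere has the form
$$f(x+yK)=b+Kc,\qquad b=\tfrac12(\alpha+\beta),\quad c=\tfrac12 I(\beta-\alpha),$$
as $K$ ranges over $\mathbb{S}$. Expanding the quaternionic modulus gives $|b+Kc|^2=|b|^2+|c|^2+2\langle\mathrm{Im}(b\bar c),K\rangle$, and since $\langle\mathrm{Im}(b\bar c),K\rangle$ sweeps the whole interval $[-|\mathrm{Im}(b\bar c)|,\,|\mathrm{Im}(b\bar c)|]$, the extreme values $s\ge\ell\ge 0$ of $|f|$ on the sphere are $s=\sqrt{M+2N}$ and $\ell=\sqrt{M-2N}$, where $M=\tfrac12(|\alpha|^2+|\beta|^2)$ and $N=|\mathrm{Im}(b\bar c)|$. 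Hence $s^2+\ell^2=|\alpha|^2+|\beta|^2$ and $s^2\ell^2=M^2-4N^2$; as these two symmetric functions determine the unordered pair $\{s,\ell\}$, it suffices to prove that each is invariant under the conjugation $f\mapsto f^c$.

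Next I would evaluate both symmetric functions through the Splitting Lemma (Lemma \ref{eq:Splitting}). Writing $f_I(z)=F(z)+G(z)J$, the conjugate splits as $f^c_I(z)=\overline{F(\bar z)}-G(z)J$, so in passing from $f$ to $f^c$ the four moduli $|F(z)|,|G(z)|,|F(\bar z)|,|G(\bar z)|$ are merely permuted. This makes the invariance of the first function $s^2+\ell^2=|f(x+yI)|^2+|f(x-yI)|^2$ immediate. For the second, the heart of the matter is the identity $s\ell=|f^s(x+yI)|$, which I would obtain by carrying out the $|b+Kc|$ computation above and recognizing the resulting expression as $|F(z)\overline{F(\bar z)}+G(z)\overline{G(\bar z)}|^2$, that is, as $|f^s_I(z)|^2$, using that the splitting of $f^s$ is $f^s_I(z)=F(z)\overline{F(\bar z)}+G(z)\overline{G(\bar z)}$ with vanishing $J$-component. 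Invariance of this function then comes for free: by Definition \ref{de: R-conjugate} one has $f^s=f\ast f^c=f^c\ast f$, and since $(f^c)^c=f$ it follows that $(f^c)^s=f^c\ast f=f^s$, whence $|(f^c)^s(x+yI)|=|f^s(x+yI)|$.

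The main obstacle is precisely the modulus identity $s\ell=|f^s(x+yI)|$: it is the one place that requires a genuine quaternionic computation, and it demands care with non-commutativity (for $w\in\mathbb{C}_I$ one has $Jw=\bar wJ$ and $\overline{wJ}=-wJ$) in order to separate the $\mathbb{C}_I$- and $\mathbb{C}_IJ$-components of $\alpha\bar\beta$ and to match its real part and its $I$-component against the computed extrema. Once this identity is established, the two invariance statements combine cleanly: for each admissible sphere $f$ and $f^c$ yield the same value of $s^2+\ell^2$ and the same value of $s\ell$, so their extreme moduli coincide pair by pair, which gives both claimed equalities simultaneously.
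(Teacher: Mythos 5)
Your proof is correct, but there is nothing in the paper to compare it against line by line: the paper does not prove Proposition \ref{prop:conjugate}, it simply quotes the result from Sarfatti's thesis \cite{Sarfatti}. The argument behind that citation goes through Proposition \ref{prop:RP}: from $f^s=f\ast f^c=f^c\ast f$ one gets $|f^s(q)|=|f(q)|\,|f^c(\tilde q)|$ and $|f^s(q)|=|f^c(q)|\,|f(\hat q)|$ for suitable points $\tilde q,\hat q$ on the same sphere, and combines this with the constancy of $|f^s|$ on spheres to pass between the extrema of $|f|$ and $|f^c|$; that route is shorter but requires separate care where $f$ (hence the inverse used to form $\tilde q$) vanishes. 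Your route is genuinely different and self-contained: you parametrize the sphere values as $f(x+yK)=b+Kc$ via the Representation Formula \ref{eq:formula}, observe that the two extrema $s\ge\ell$ are determined by the invariants $s^2+\ell^2$ and $s\ell$, and prove each invariant is unchanged under $f\mapsto f^c$ using only the Splitting Lemma \ref{eq:Splitting} and $(f^c)^s=f^c\ast(f^c)^c=f^c\ast f=f^s$. I verified the one computation you flagged as the heart of the matter, and it does close: with $\alpha=f(x+yI)$, $\beta=f(x-yI)$, $b=\tfrac12(\alpha+\beta)$, $c=\tfrac12 I(\beta-\alpha)$, one finds $M^2-4N^2=\mathrm{Re}(\alpha\bar\beta)^2+\langle\mathrm{Im}(\alpha\bar\beta),I\rangle^2$, i.e.\ the squared modulus of the $\mathbb{C}_I$-component of $\alpha\bar\beta$; since $\alpha\bar\beta=\bigl(F(z)\overline{F(\bar z)}+G(z)\overline{G(\bar z)}\bigr)+\bigl(G(z)F(\bar z)-F(z)G(\bar z)\bigr)J$ and $f^s_I(z)=F(z)\overline{F(\bar z)}+G(z)\overline{G(\bar z)}$, this is exactly $|f^s_I(z)|^2$, so $s\ell=|f^s(x+yI)|$ as you claimed. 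What your approach buys, beyond independence from the reference, is that it avoids any case distinction at zeros of $f$ and produces two identities of independent interest, $s^2+\ell^2=|f(x+yI)|^2+|f(x-yI)|^2$ and $s\ell=|f^s(x+yI)|$, which refine the proposition rather than merely prove it.
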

We now recall more results from  \cite {GS2}.
\begin{theorem}{\bf(Identity Principle)}\label{th:IP-theorem}
Let $f$ be a regular function on a slice domain $\Omega\subseteq \mathbb H$. Denote by $\mathcal{Z}_f$ the zero set of $f$, $$\mathcal{Z}_f=\big\{q\in \Omega:f(q)=0\big\}.$$
If there exists an $I\in \mathbb S$ such that $\Omega_I\cap \mathcal{Z}_f$ has an accumulation point in $\Omega_I$, then $f$ vanishes identically on $\Omega$.
\end{theorem}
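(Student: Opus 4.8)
The plan is to reduce the quaternionic statement to the classical identity theorem for holomorphic functions via the Splitting Lemma, and then to propagate the vanishing from a single slice to the whole domain using the defining property of a slice domain. First I would fix the slice $I \in \mathbb S$ provided by the hypothesis and choose $J \in \mathbb S$ with $J \perp I$, so that the Splitting Lemma yields holomorphic functions $F, G : \Omega_I \to \mathbb C_I$ with $f_I(z) = F(z) + G(z)J$ for every $z \in \Omega_I$. Since $J \perp I$, the plane $\mathbb C_I J$ is the orthogonal complement of $\mathbb C_I$ in $\mathbb H$, and the sum $\mathbb H = \mathbb C_I \oplus \mathbb C_I J$ is direct; consequently $f_I(z) = 0$ holds if and only if $F(z) = 0$ and $G(z) = 0$. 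Thus the set $\Omega_I \cap \mathcal Z_f$ coincides with the common zero set of $F$ and $G$.

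By hypothesis $\Omega_I \cap \mathcal Z_f$ has an accumulation point in $\Omega_I$. Since $\Omega$ is a slice domain, $\Omega_I$ is a domain in $\mathbb C_I \cong \mathbb C$, hence connected, so I can invoke the classical identity principle for each of the holomorphic functions $F$ and $G$: each vanishes on a set having an accumulation point in the connected open set $\Omega_I$, and therefore $F \equiv 0$ and $G \equiv 0$ on $\Omega_I$. This gives $f_I \equiv 0$, i.e. $f$ vanishes identically on the single slice $\Omega_I$.

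It remains to pass from $\Omega_I$ to all of $\Omega$. Here the key point is that a slice domain meets the real axis, so $\Omega \cap \mathbb R$ is a nonempty open subset of $\mathbb R$, and it is contained in every slice $\Omega_{I'}$ because real points lie in every $\mathbb C_{I'}$. Since $f$ already vanishes on $\Omega_I \supseteq \Omega \cap \mathbb R$, for an arbitrary $I' \in \mathbb S$ the restriction $f_{I'}$ vanishes on $\Omega \cap \mathbb R$, which has accumulation points in the connected domain $\Omega_{I'}$. Repeating the splitting argument of the previous paragraph with $I'$ in place of $I$ then forces $f_{I'} \equiv 0$ for every $I' \in \mathbb S$. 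As $\Omega = \bigcup_{I' \in \mathbb S} \Omega_{I'}$, we conclude that $f \equiv 0$ on $\Omega$.

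The step I expect to require the most care is the propagation from one slice to all slices: the argument hinges entirely on the slice-domain hypothesis—the existence of real points and the connectedness of each $\Omega_{I'}$—and it is this structural assumption, rather than any analytic estimate, that upgrades the one-variable identity principle to a global statement on $\Omega$. A secondary point deserving attention is the splitting of the zero condition $F(z) + G(z)J = 0$ into the two conditions $F(z) = 0$ and $G(z) = 0$, which relies on the directness of the decomposition $\mathbb H = \mathbb C_I \oplus \mathbb C_I J$ guaranteed by $J \perp I$.
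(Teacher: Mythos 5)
Your proof is correct and is essentially the canonical argument: the paper itself states this Identity Principle without proof, recalling it from the monograph \cite{GS2}, and the proof given in that reference proceeds exactly as you have reconstructed it --- split $f_I$ into holomorphic components via the Splitting Lemma, apply the classical identity theorem on the slice $\Omega_I$, and then propagate to every other slice through the real points $\Omega\cap\mathbb R$ guaranteed by the slice-domain hypothesis. Both of the delicate points you flagged (the directness of $\mathbb H=\mathbb C_I\oplus\mathbb C_I J$ and the accumulation of $\Omega\cap\mathbb R$ in each $\Omega_{I'}$) are handled correctly, so there is no gap.
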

\begin{theorem}{\bf(Maximum Modulus Principle)}\label{th:MMP-theorem}
Let $f$ be a regular function on a slice domain $\Omega \subseteq \mathbb H $. If $|f|$ has a relative maximum in $\Omega$, then $f$ is constant in $\Omega$.
\end{theorem}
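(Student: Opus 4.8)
The plan is to reduce the quaternionic statement to an assertion about subharmonic functions on a single slice, using the Splitting Lemma. Suppose $|f|$ attains a relative maximum at a point $q_0=x_0+y_0I_0\in\Omega$; if $q_0$ is real we choose $I_0\in\mathbb S$ arbitrarily. First I would set $I=I_0$, so that $q_0$ lies in the slice $\Omega_I$, and observe that the restriction $|f_I|$ then has a relative maximum at $z_0=q_0$ as a function on the planar domain $\Omega_I\subseteq\mathbb C_I$. Next, invoking the Splitting Lemma (Lemma \ref{eq:Splitting}), I would pick $J\in\mathbb S$ with $J\perp I$ and write $f_I(z)=F(z)+G(z)J$ with $F,G:\Omega_I\to\mathbb C_I$ holomorphic. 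Since $\{1,I,J,IJ\}$ is an orthonormal basis of $\mathbb H$, the summands $F(z)$ and $G(z)J$ are orthogonal, so that
$$|f_I(z)|^2=|F(z)|^2+|G(z)|^2.$$
Thus the real function $u:=|F|^2+|G|^2$ has a relative maximum at $z_0$.

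The crucial observation is that $u$ is \emph{subharmonic}: for a holomorphic function on a planar domain one has $\Delta|F|^2=4|F'|^2\geq0$, and likewise for $G$, so $\Delta u=4\big(|F'|^2+|G'|^2\big)\geq0$. A subharmonic function that attains an interior relative maximum is constant in a neighborhood of that point, by the strong maximum principle for subharmonic functions. Hence $u$ is constant near $z_0$, so $\Delta u\equiv0$ there, forcing $|F'|^2+|G'|^2\equiv0$ and therefore $F'\equiv G'\equiv0$ on a neighborhood of $z_0$. Consequently $F$ and $G$ are constant near $z_0$.

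To finish, I would use connectivity and the Identity Principle. Since $\Omega$ is a slice domain, the planar domain $\Omega_I$ is connected, so the classical identity theorem promotes $F,G$ from locally constant to constant on all of $\Omega_I$; thus $f_I\equiv f(q_0)$ on $\Omega_I$. Applying the Identity Principle (Theorem \ref{th:IP-theorem}) to the regular function $f-f(q_0)$, which vanishes on the whole slice $\Omega_I$ (a set with accumulation points in $\Omega_I$), I would conclude that $f\equiv f(q_0)$ on $\Omega$, i.e.\ $f$ is constant.

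The main obstacle I anticipate is precisely that $|f_I|^2$ is a \emph{sum} of two squared holomorphic moduli, $|F|^2+|G|^2$, rather than the square of a single holomorphic modulus; this blocks any direct appeal to the scalar holomorphic maximum modulus principle. The subharmonicity argument, together with the Laplacian computation that forces $F'=G'=0$ as soon as $u$ is locally constant, is exactly what circumvents this difficulty and constitutes the heart of the proof.
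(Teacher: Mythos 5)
Your proof is correct at every step. One point about the comparison itself: the paper does not prove this theorem at all --- it is recalled, without proof, as a preliminary result from the monograph \cite{GS2} --- so the only meaningful comparison is with the standard proof in that source.

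Both that proof and yours open and close the same way: reduce to the slice through the maximum point, split $f_I=F+GJ$ by the Splitting Lemma \ref{eq:Splitting}, note $|f_I|^2=|F|^2+|G|^2$ by orthogonality of $\mathbb{C}_I$ and $\mathbb{C}_IJ$, and propagate constancy from $\Omega_I$ to $\Omega$ via the Identity Principle \ref{th:IP-theorem}. The middle is where you genuinely diverge. The classical argument (in \cite{GS2}, going back to \cite{GS4}) avoids potential theory: assuming $f(q_0)\neq 0$ (the case $f(q_0)=0$ being trivial), it applies the ordinary holomorphic maximum modulus principle to the auxiliary function $h(z)=F(z)\overline{F(z_0)}+G(z)\overline{G(z_0)}$, whose modulus has a relative maximum at $z_0$ because Cauchy--Schwarz gives $|h(z)|\le |f_I(z)|\,|f_I(z_0)|\le |f_I(z_0)|^2=h(z_0)$; constancy of $h$ together with the equality case of Cauchy--Schwarz then forces $f_I$ to be constant near $z_0$. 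You instead use subharmonicity of $u=|F|^2+|G|^2$, the identity $\Delta u=4\bigl(|F'|^2+|G'|^2\bigr)$, and the maximum principle for subharmonic functions to get $F'\equiv G'\equiv 0$ near $z_0$. Both are complete; yours trades the elementary but somewhat magical Cauchy--Schwarz device for a standard fact of potential theory, needs no case distinction at $f(q_0)=0$, makes transparent why the theorem holds (a sum of squared moduli of holomorphic functions is subharmonic), and extends verbatim when the splitting has more than two holomorphic components (e.g.\ for slice regular functions of a Cayley variable, cf.\ \cite{GS50}). Two small things worth making explicit in a final write-up: the connectedness of $\Omega_I$ that licenses your identity-theorem step is exactly what the paper's definition of a slice domain postulates, and the local maximum principle you invoke for subharmonic functions is the global one applied on a small ball around $z_0$ contained in the neighborhood where $u\le u(z_0)$.
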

\begin{theorem}{\bf(Schwarz Lemma)}\label{th:SL-theorem}
Let $f:\mathbb B\rightarrow\mathbb B$ be a regular function. If $f(0)=0$, then
$$|f(q)|\leq|q|$$
for all $q\in \mathbb B$ and $$|f'(0)|\leq1.$$
Both inequalities are strict {\rm{(}}except at $q=0${\rm{)}} unless $f(q)=qu$ for some $u\in \partial\mathbb B$.
\end{theorem}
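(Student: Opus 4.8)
The plan is to follow the classical scheme for the Schwarz lemma, replacing the disk by $\mathbb B$ and ordinary holomorphy by the tools available for regular functions, most notably the Maximum Modulus Principle (Theorem \ref{th:MMP-theorem}). Since $f(0)=0$, the Taylor expansion (Theorem \ref{eq:Taylor}) gives $f(q)=\sum_{n=1}^\infty q^n a_n$ with $a_0=0$. The first step is to factor out $q$: setting $g(q)=\sum_{n=0}^\infty q^n a_{n+1}$, which is again regular on $\mathbb B$, I claim $f(q)=q\,g(q)$ as an honest pointwise product. Indeed, because $q$ commutes with each of its own powers $q^n$, one has $q\,g(q)=\sum_{n=0}^\infty q^{n+1}a_{n+1}=f(q)$, so no $\ast$-product subtleties intervene and $|f(q)|=|q|\,|g(q)|$ for every $q\in\mathbb B$. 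Note also that $g(0)=a_1=f'(0)$.

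The heart of the argument is the estimate $|g|\le 1$ on $\mathbb B$. Fix $r\in(0,1)$ and restrict attention to the symmetric slice domain $B(0,r)$. On the sphere $\{|q|=r\}$ we have $|g(q)|=|f(q)|/r<1/r$ since $f$ maps into $\mathbb B$. As $g$ is continuous on the compact ball $\overline{B(0,r)}$, it attains its maximum modulus there; by Theorem \ref{th:MMP-theorem} this maximum cannot be attained at an interior point unless $g$ is constant, so in every case $\sup_{|q|\le r}|g|=\sup_{|q|=r}|g|\le 1/r$. Hence for any fixed $q\in\mathbb B$, choosing $r$ with $|q|<r<1$ and letting $r\to 1^-$ yields $|g(q)|\le 1$. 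Combining with the factorization gives $|f(q)|=|q|\,|g(q)|\le|q|$, and evaluating at $q=0$ gives $|f'(0)|=|g(0)|\le 1$.

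For the rigidity statement I would argue that equality in either inequality forces $|g|$ to attain the value $1$ at an interior point of $\mathbb B$: if $|f(q_0)|=|q_0|$ for some $q_0\ne 0$ then $|g(q_0)|=1$, and if $|f'(0)|=1$ then $|g(0)|=1$. Since $|g|\le 1$ throughout $\mathbb B$, such a point is a relative maximum of $|g|$, so Theorem \ref{th:MMP-theorem} forces $g\equiv u$ to be a constant with $|u|=1$, i.e. $u\in\partial\mathbb B$. Then $f(q)=q\,g(q)=qu$, which is exactly the exceptional case; conversely $f(q)=qu$ with $|u|=1$ manifestly realizes equality everywhere, so away from this case both inequalities are strict for $q\ne 0$.

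The step I expect to require the most care is the factorization $f(q)=q\,g(q)$: one must check that this truly holds as a pointwise product rather than merely as a $\ast$-product, which is what lets me pass freely between $|f|$ and $|g|$ on each sphere; this works precisely because the left factor is the identity $q\mapsto q$, whose powers commute with $q$. The only other delicate point is the passage $\sup_{|q|\le r}|g|=\sup_{|q|=r}|g|$, which is the standard maximum-modulus corollary but here must be invoked on the four-dimensional ball and justified through Theorem \ref{th:MMP-theorem} on the slice domain $B(0,r)$ rather than through a one-variable argument.
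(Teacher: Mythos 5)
Your proof is correct, and it is essentially the standard argument: since the paper itself states this Schwarz Lemma without proof (it is recalled as a preliminary from \cite{GS2}), the right benchmark is the proof in that reference, which proceeds exactly as you do, by factoring $f(q)=q\,g(q)$ and applying the Maximum Modulus Principle (Theorem \ref{th:MMP-theorem}) to $g$. You also correctly identify and resolve the one genuinely quaternionic subtlety, namely that the factorization is an honest pointwise product (equivalently, that $q\ast g$ and $q\cdot g$ agree because the left factor is $q$ itself), which is what allows the passage $|f(q)|=|q|\,|g(q)|$ on each sphere and makes both the estimate on $\overline{B(0,r)}$ as $r\to 1^-$ and the rigidity step via a relative interior maximum of $|g|$ go through.
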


The quaternionic counterpart of complex Hardy spaces was considered in \cite{C}.

\begin{definition} \label{de: Hardy}
Let $f$ be a regular function on $\mathbb B$ and let $0<p<\infty$. Set
$$||f||_p=\sup_{I\in \mathbb S}\lim\limits_{r\rightarrow 1^-}\bigg(\frac{1}{2\pi}\int_{-\pi}^{\pi}\big|f(re^{I\theta})\big|^{p}d\theta\bigg)^{\frac{1}{p}},$$
and$$||f||_{\infty}=\sup_{q\in \mathbb B}|f(q)|.$$
Then for any $0<p\leq\infty$, the slice regular Hardy space $H^p(\mathbb B)$ is defined as
$$H^p(\mathbb B)=\big\{f:\mathbb B \rightarrow \mathbb H \,|\, f\mbox{ is regular and}\,\, ||f||_p<\infty\big\}.$$
\end{definition}
Obviously, $H^p(\mathbb B)$ is a real vector space. Moreover, $H^p(\mathbb B)$ is not only a left $\mathbb H$-module, but also a right $\mathbb H$-module with respect to $+$ and $\ast$.
In analogy with the complex case, the space $H^2(\mathbb B)$ is  a Hilbert $\mathbb H$-module (see \cite{C}).
\begin{proposition} \label{prop:H^2-norm}
Let $f\in H^2(\mathbb B)$ and let $f(q)=\sum\limits_{n=0}^{\infty}q^n a_n $ be its power series expansion. Then the 2-norm of $f$,
$$||f||_2=\sup_{I\in \mathbb S}\lim\limits_{r\rightarrow 1^-}\bigg(\frac{1}{2\pi}\int_{-\pi}^{\pi}\big|f(re^{I\theta})\big|^{2}d\theta\bigg)^{\frac{1}{2}}$$
coincides with
$$\bigg(\sum\limits_{n=0}^{\infty}|a_n|^2\bigg)^{\frac{1}{2}}.$$
Moreover, for any $r\in[0,1)$ the integral$$\frac{1}{2\pi}\int_{-\pi}^{\pi}\big|f(re^{I\theta})\big|^{2}d\theta=\sum\limits_{n=0}^{\infty}|a_n|^2r^{2n}$$
does not depend on $I\in\mathbb S$.
\end{proposition}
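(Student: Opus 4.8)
The plan is to reduce the computation on each slice to the classical $H^2$ theory on the unit disc by means of the Splitting Lemma (Lemma~\ref{eq:Splitting}), and then to observe that the resulting quantity is manifestly independent of the imaginary unit $I$, so that the supremum over $\mathbb S$ becomes trivial.

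First I would fix $I\in\mathbb S$ together with a complementary unit $J\in\mathbb S$, $J\perp I$, and write each Taylor coefficient as $a_n=b_n+c_nJ$ with $b_n,c_n\in\mathbb C_I$. Restricting the power series to the slice $\mathbb C_I$, where $q=z\in\mathbb C_I$ and $q^n=z^n$, one identifies the splitting $f_I(z)=F(z)+G(z)J$ with $F(z)=\sum_n z^nb_n$ and $G(z)=\sum_n z^nc_n$, both holomorphic $\mathbb C_I$-valued functions on $\mathbb B_I\cong\mathbb D$. Since $G(z)J$ lies in the orthogonal complement of $\mathbb C_I$ and $|J|=1$, one has the pointwise identity $|f_I(z)|^2=|F(z)|^2+|G(z)|^2$, and correspondingly $|a_n|^2=|b_n|^2+|c_n|^2$.

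Next I would apply the classical orthogonality relation $\frac{1}{2\pi}\int_{-\pi}^{\pi}e^{I(n-m)\theta}\,d\theta=\delta_{nm}$ to $F$ and to $G$ separately; within $\mathbb C_I\cong\mathbb C$ this is genuinely the classical Parseval computation, and the coefficients commute with $e^{I\theta}$. This yields, for every $r\in[0,1)$,
$$\frac{1}{2\pi}\int_{-\pi}^{\pi}\big|F(re^{I\theta})\big|^2\,d\theta=\sum_{n=0}^{\infty}|b_n|^2r^{2n},\qquad\frac{1}{2\pi}\int_{-\pi}^{\pi}\big|G(re^{I\theta})\big|^2\,d\theta=\sum_{n=0}^{\infty}|c_n|^2r^{2n}.$$
Adding the two and using $|a_n|^2=|b_n|^2+|c_n|^2$ gives $\frac{1}{2\pi}\int_{-\pi}^{\pi}|f(re^{I\theta})|^2\,d\theta=\sum_n|a_n|^2r^{2n}$, whose right-hand side does not involve $I$; this is exactly the ``moreover'' assertion. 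Letting $r\to1^-$ (a monotone limit of nonnegative terms) and taking the supremum over $\mathbb S$ of an $I$-independent quantity then produces $\|f\|_2^2=\sum_n|a_n|^2$.

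The only routine care needed is the justification of term-by-term integration: the power series converges uniformly on the circle $|q|=r<1$, which lies strictly inside the radius of convergence $R=1$, so the interchange of summation and integration for $F\overline F$ and $G\overline G$ is legitimate. The one genuine subtlety, and the reason the Splitting Lemma is the cleanest route, is noncommutativity: one cannot naively expand $|f(re^{I\theta})|^2$ as a double series and pull each $a_n$ out of $e^{In\theta}(\cdot)e^{-Im\theta}$, since $a_n$ need not commute with $I$. Projecting onto $\mathbb C_I$ and its orthogonal complement is precisely what disentangles this. A direct double-series computation is also possible, but it then requires separately checking that every cross term with $n\neq m$ integrates to zero and that the off-diagonal contributions from $\mathbb C_I^\perp$ vanish—the latter surviving only when $n=m=0$, where the relevant coefficient $a_0\overline{a_0}=|a_0|^2$ is real and hence has no $\mathbb C_I^\perp$ part.
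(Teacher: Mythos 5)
Your proof is correct. Note, however, that the paper does not actually prove this proposition: it is recalled as a known preliminary result, with the proof deferred to the reference \cite{C} (de Fabritiis--Gentili--Sarfatti, \emph{Quaternionic Hardy spaces}), so there is no internal argument to compare against line by line. Your route—splitting $f_I=F+GJ$ with $F(z)=\sum_n z^n b_n$, $G(z)=\sum_n z^n c_n$, using the orthogonal decomposition $\mathbb H=\mathbb C_I\oplus\mathbb C_I J$ to get the pointwise identity $|f_I|^2=|F|^2+|G|^2$ and $|a_n|^2=|b_n|^2+|c_n|^2$, and then invoking classical Parseval on each $\mathbb C_I$-valued component—is sound, and it correctly isolates the one genuine danger (one cannot pull the quaternionic coefficients $a_n$ through $e^{I\theta}$). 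The argument in the cited literature is essentially the direct double-series computation you sketch as an alternative: expand $|f(re^{I\theta})|^2=\sum_{n,m}r^{n+m}e^{In\theta}a_n\bar a_m e^{-Im\theta}$, split $a_n\bar a_m$ into its $\mathbb C_I$ and $\mathbb C_I J$ parts, and check that the $\mathbb C_I$ part integrates to $\delta_{nm}$ while the $J$-part picks up $e^{I(n+m)\theta}$ and hence vanishes for all $n,m\geq 0$ except $n=m=0$, where $a_0\bar a_0$ is real; your observation on this point is accurate. The two approaches are interchangeable here; yours delegates the orthogonality bookkeeping to the classical one-variable theory, while the direct computation is self-contained but requires the cross-term verification explicitly. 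Either way the $I$-independence of $\sum_n|a_n|^2r^{2n}$ makes the supremum over $\mathbb S$ trivial, and the monotone limit $r\to1^-$ finishes the proof exactly as you state.
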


\section{Vlacci's regular composition}
One point in the setting of quaternions is  that
the regularity does not keep under
the usual composition.
Indeed,
the only case in which the regularity of $f\circ\varphi$ is maintained for any regular function $f$ is when the regular function $\varphi$ is slice preserving. If instead $\varphi$ has no constraints, then $f$ has to be an affine function of the form $f(q)=a+qb$ for some $a,b\in \mathbb H$.

Vlacci \cite{Vlacci}  introduces  a kind of regular composition for regular functions,
which preserves regularity locally.

Let $f:B(0,R)\rightarrow \mathbb H$ be regular  with Taylor expansion $$f(q)=\sum\limits_{n=0}^{\infty}q^n a_n,  $$ and let $\varphi: B(0, r)\rightarrow B(0,R)$ be another regular function with $0<r\leq\infty$ and
$$\varphi(q)=\sum\limits_{n=0}^{\infty}q^n b_n.$$
Starting from  the Taylor expansion
$$f^{\circ}\varphi(q)=\sum\limits_{n=0}^{\infty}q^n \frac{(f^{\circ}\varphi)^{(n)}(0)}{n!} $$
and inspired by the classical \emph{Fa\`{a} di Bruno} formula in the commutative setting,
 Vlacci introduces the  following regular composition of $f$ with $\varphi$.

\begin{definition} \label{de: R-composition}  The Vlacci's  regular composition of $f$ with $\varphi$ is formally defined by
\begin{eqnarray}\label{Composition coefficents1}
f^{\circ}\varphi(q)=\sum\limits_{n=0}^{\infty}q^n c_n,
\end{eqnarray}
whose coefficients are given by
$$c_0=f(b_0)$$
and
\begin{eqnarray}\label{Composition coefficents}\qquad c_n=\frac1{n!}\sum\limits_{d=1}^{n}B_{n,d}(b_1,2!b_2,\cdots,n!b_n)f^{(d)}(b_0),\quad n\geq1.
\end{eqnarray}
Here $B_{n,d}(d=1,2,\cdots,n)$  are the homogeneous parts of the corresponding noncommutative Bell polynomial $B_n$, which coincides with the classical Bell polynomial when restricted to $\mathbb C^n$.
\end{definition}
Denote
$$C(n, n_2, \ldots, n_d)={n-1\choose n_2}{n_2-1\choose n_3}\cdots{n_{d-1}-1\choose n_d}$$
The explicit expression of $B_{n,d}$ is given by
$$B_{n,d}(q_1,\cdots,q_n)=\sum\limits_{n> n_2>\cdots>n_d\geq1}C(n, n_2, \ldots, n_d)q_{n_d}q_{n_{d-1}-n_d}\cdots q_{n_2-n_3}q_{n-n_2}$$
for $n\geq d\geq2$ and
$$B_{n-1,0}=1\qquad and \qquad B_{n,1}(q_1,q_2,\ldots,q_n)=q_n ,\quad n\geq1.$$
These  can be verified by  applying the recursion equation
$$B_{n+1}=\sum\limits_{k=0}^n {n\choose k}B_{n-k}q_{k+1}$$
for $n=0,1,2,\cdots$ together with the initial condition
 $$B_0=1.$$
For more generally noncommutative setting, not only limited to quaternions, we refer to   \cite[Theorem 1]{Schimming} and the references therein for more details.

With the Vlacci approach, one can introduce some new regular compositions.
For example, one can modify Vlacci's regular composition by  changing the order of the factors in the summands  of (\ref{Composition coefficents}).
We thus can introduce new regular composition of $f$ with $\varphi$, denoted by $f_{\circ}\varphi$,  via
\begin{eqnarray}\label{Composition coefficents2}
f_{\circ}\varphi(q)=\sum\limits_{n=0}^{\infty}q^n d_n,
\end{eqnarray}
whose coefficients are given by
$$d_0=f(b_0),\qquad d_n=\frac1{n!}\sum\limits_{d=1}^{n}f^{(d)}(b_0)B_{n,d}(b_1,2!b_2,\cdots,n!b_n),\quad n\geq1.$$

At first sight, it seems that the definition of $f^{\circ}\varphi$ coincides with that of $f_{\circ}\varphi$. Unfortunately, the answer is negative in general, as the following example shown.

\begin{example}
Let $f(q)=q$ and $ \varphi(q)=q^2I$ with $I,J\in \mathbb S$ and $I\perp J $, then
$$f^{\circ}\varphi(q)=q^2IJ,\qquad while \qquad f_{\circ}\varphi(q)=-q^2IJ.$$
\end{example}

Some new regular compositions can be further introduced.  Motivated by (\ref{eq:RC-con100}) below one can introduce  regular  compositions $f_\bullet \varphi$ and $f^\bullet \varphi$ via identities
\begin{eqnarray}\label{eq:RC-con100}
(f^{\bullet}\varphi)^c=({f^c})_{\circ}(\varphi^c),\qquad \quad(f_{\bullet}\varphi)^c=({f^{c}})^{\circ}(\varphi^c).
\end{eqnarray}
Here we use the fact that regular conjugations are involution operators.
\bigskip

There are strict restraints for the existence of the above slice regular compositions.

For any regular function $f(q)=\sum\limits_{n=0}^{\infty}q^n a_n$ on $B(0, R)$, denote
$$f_{abs}(q)=\sum\limits_{n=0}^{\infty}q^n |a_n|.$$
Vlacci \cite{Vlacci} provides  a sufficient condition for which  the regular compositions exist.

\begin{theorem} \label{Vlacci-existence}If the composition $f_{abs}\circ \varphi_{abs}$ exists on $B(0, R)$, then
$f^{\circ}\varphi$ is regular on $B(0, R)$.
\end{theorem}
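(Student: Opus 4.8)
The plan is to reduce the assertion to a statement about radii of convergence: by the Taylor Expansion Theorem (Theorem \ref{eq:Taylor}) it suffices to show that the formal series $f^{\circ}\varphi(q)=\sum_{n=0}^{\infty}q^{n}c_{n}$ has radius of convergence at least $R$. I would achieve this by proving the coefficientwise domination
$$|c_n|\le\gamma_n\qquad(n\ge0),$$
where $\gamma_n$ denotes the $n$-th Taylor coefficient at the origin of the ordinary composition $f_{abs}\circ\varphi_{abs}$. Since $f_{abs}\circ\varphi_{abs}$ is assumed to exist on $B(0,R)$ and has nonnegative coefficients, the series $\sum_{n}\gamma_n t^{n}$ converges for $0\le t<R$, so $\limsup_n\gamma_n^{1/n}\le 1/R$; the domination then forces $\limsup_n|c_n|^{1/n}\le 1/R$, whence $f^{\circ}\varphi$ is regular on $B(0,R)$.

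The first step is to identify $\gamma_n$ explicitly. Applying the classical Fa\`a di Bruno formula to the real-analytic functions $f_{abs}$ and $\varphi_{abs}$, whose derivatives at the origin are $\varphi_{abs}^{(k)}(0)=k!\,|b_k|$, gives
$$\gamma_0=f_{abs}(|b_0|),\qquad \gamma_n=\frac1{n!}\sum_{d=1}^{n}B_{n,d}\big(|b_1|,2!|b_2|,\dots,n!|b_n|\big)\,f_{abs}^{(d)}(|b_0|),\quad n\ge1.$$
Here there is no cancellation, because every entry is a nonnegative real and the combinatorial coefficients $C(n,n_2,\dots,n_d)$ appearing in $B_{n,d}$, being products of binomial coefficients, are positive.

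The second step is the pair of norm estimates that turn the formula \eqref{Composition coefficents} for $c_n$ into the formula for $\gamma_n$. Using the multiplicativity of the quaternionic modulus, $|pq|=|p||q|$, together with the explicit monomial expansion of the noncommutative Bell polynomial, I would show
$$\big|B_{n,d}(b_1,2!b_2,\dots,n!b_n)\big|\le B_{n,d}\big(|b_1|,2!|b_2|,\dots,n!|b_n|\big),$$
since each monomial $q_{n_d}q_{n_{d-1}-n_d}\cdots q_{n-n_2}$ has modulus equal to the product of the moduli of its factors, independently of the order in which they are multiplied, and the coefficients $C(n,n_2,\dots,n_d)$ are positive. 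Likewise, from $f^{(d)}(q)=\sum_{m\ge d}\frac{m!}{(m-d)!}q^{m-d}a_m$ the triangle inequality gives
$$\big|f^{(d)}(b_0)\big|\le\sum_{m\ge d}\frac{m!}{(m-d)!}|b_0|^{m-d}|a_m|=f_{abs}^{(d)}(|b_0|).$$
Inserting both bounds into \eqref{Composition coefficents} and comparing with the formula for $\gamma_n$ yields $|c_n|\le\gamma_n$ for $n\ge1$, while the case $n=0$ is the elementary estimate $|c_0|=|f(b_0)|\le f_{abs}(|b_0|)=\gamma_0$.

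I expect the main obstacle to be the bookkeeping in the first norm estimate: one must verify that the ordering of the quaternionic factors in the noncommutative Bell polynomial $B_{n,d}$ is irrelevant for the modulus, so that the estimate reduces cleanly to the commutative Bell polynomial evaluated at the absolute values. Once this order-independence is secured by the multiplicativity of the modulus, the remaining steps are routine, and the comparison of radii of convergence completes the proof.
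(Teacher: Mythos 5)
First, a point of comparison: the paper does not actually prove this theorem; it is stated as Vlacci's result and cited from \cite{Vlacci}, so there is no internal proof to measure your argument against. That said, your proof is correct, and it is the natural domination argument (essentially the one underlying Vlacci's paper): reduce to a radius-of-convergence statement via the Taylor expansion theorem, majorize the noncommutative Fa\`a di Bruno coefficients $c_n$ by the coefficients $\gamma_n$ of the commutative composite $f_{abs}\circ\varphi_{abs}$, and conclude by Cauchy--Hadamard. The two estimates you isolate are sound: since the quaternionic modulus is multiplicative, every monomial $q_{n_d}q_{n_{d-1}-n_d}\cdots q_{n-n_2}$ occurring in $B_{n,d}$ has modulus equal to the product of the moduli of its entries irrespective of their order, and the coefficients $C(n,n_2,\dots,n_d)$ are positive integers, so $|B_{n,d}(b_1,\dots,n!b_n)|\le B_{n,d}(|b_1|,\dots,n!|b_n|)$; likewise $|f^{(d)}(b_0)|\le f_{abs}^{(d)}(|b_0|)$, where the differentiated series converge because $|b_0|=|\varphi(0)|<R$ and the radius of convergence of $f_{abs}$ equals that of the series of $f$, hence is at least $R$.

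The one step you should spell out is the assertion that $\sum_n\gamma_n t^n$ converges for every $t\in[0,R)$. Pointwise existence of the composite alone would not give this: a real-analytic function on an interval can have a Taylor series at the origin with strictly smaller radius of convergence than the interval. What rescues the claim is precisely nonnegativity, through rearrangement (Tonelli) of nonnegative double series: for fixed $t\in[0,R)$ the iterated sum $\sum_{d}|a_d|\bigl(\sum_k |b_k|t^k\bigr)^d$ converges by hypothesis and has only nonnegative terms, so it can be expanded into monomials and regrouped by powers of $t$ without changing its finite value; the regrouped series is exactly $\sum_n\gamma_n t^n=f_{abs}(\varphi_{abs}(t))<\infty$. (A Pringsheim-type argument would also work, but it needs the composite to be complex-analytic on a neighbourhood of $[0,R)$, which requires ruling out the edge case in which $\varphi_{abs}(t)$ equals the convergence radius of $f_{abs}$; the rearrangement argument avoids this entirely.) Your remark that ``there is no cancellation'' points in this direction, but this regrouping is where the hypothesis genuinely does its work, so it deserves an explicit sentence; with that added, the proof is complete.
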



However, the Bohr-type phenomenon \cite{RGS,GS2} shows that
$$\varphi_{abs}: \frac{1}{3}\mathbb B\longrightarrow \mathbb B$$
for any regular function $\varphi: \mathbb B\longrightarrow \mathbb B$.
Since the range of the function $\varphi_{abs}$ plays a role in the above composition,
we thus observed that there exists obstacle to define
$${\varphi_a}^{\circ}\varphi_a.$$
 Here $\varphi_a$ is the regular M\"obius transformation of $\mathbb B$ (see (\ref{eq:mobius-transform})).

\section{Slice regular compositions}
Vlacci's slice regular composition is defined only locally, we need to introduce a globally defined  slice regular composition in order to achieve the theory of regular composition.

\begin{definition} \label{de: R-composition} Let $\Omega$ be  a symmetric slice domain
in $\mathbb H$ and $0<R\leq\infty$. For any two regular functions $\varphi:\Omega\rightarrow B(0,R)$
and  $f:B(0,R)\rightarrow \mathbb H$  with power series expansion $$f(q)=\sum\limits_{n=0}^{\infty}q^n a_n, $$
 a regular composition of $f$ with $\varphi$ can be defined  by
\begin{eqnarray}\label{de:definition of RC}
f^{\odot}\varphi(q)=\sum\limits_{n=0}^{\infty} \varphi^{\ast n}(q)a_n.
\end{eqnarray}
Alternatively, one can also define  regular composition of $f$ with $\varphi$ as
$$f_{\odot}\varphi(q)=\sum\limits_{n=0}^{\infty} a_n \ast \varphi^{\ast n}(q).$$
\end{definition}
These two regular compositions are different  and conjugate in some sense, see Theorem \ref{eq:RC-prop1}. This fact reflects the diversity of the noncommutative setting as we mentioned in the introduction.

Some remarks are in order.
\begin{remark}\label{eq:R1}
The two regular compositions of $f$ with $\varphi$ are both well-defined. Indeed,
for any fixed $q_0=x+yI\in\Omega$, let $M$ denote the maximum modulus  of $\varphi$ on the 2-sphere $[q_0]=\big\{x+yJ:J\in\mathbb S\big\}$, i.e.
$$ M=\max\limits_{q\in[q_0]} {|\varphi(q)|}<R,$$ since $\varphi(\Omega)\subseteq B(0, R)$. From Proposition \ref{prop:RP} it is easy to prove  by induction that
$$\big|\varphi^{\ast n}(q)\big|\leq M^n$$
for any $q\in [q_0]$ and any $n\in \mathbb N$.
Again  Proposition \ref{prop:RP} implies that for all $q\in [q_0],$
$$\sum\limits_{n=0}^{\infty} \big| \varphi^{\ast n}(q)a_n\big|\leq\sum\limits_{n=0}^{\infty}|a_n|M^n< \infty,$$
and
$$\sum\limits_{n=0}^{\infty} \big|a_n \ast \varphi^{\ast n}(q)\big|\leq\sum\limits_{n=0}^{\infty}|a_n|M^n< \infty, $$
owing to the absolute and uniform convergence of the power series $\sum\limits_{n=0}^{\infty}q^n a_n $ on the closed ball $\overline{B(0,M)}\subset B(0,R)$.
\end{remark}

\begin{remark}
If $\varphi$ is a slice preserving function, i.e. $\varphi(\Omega_I)\subseteq \mathbb C_I$ for any $I\in\mathbb S$, then the two kinds of regular compositions $f^{\odot}\varphi$ and $f_{\odot}\varphi$ both coincide with the usual composition $f\circ\varphi$.
\end{remark}

\begin{remark}
At first sight, it seems that the definitions of $f^{\circ}\varphi$ and $f_{\circ}\varphi$  coincide with those of $f^{\odot}\varphi$ and $f_{\odot}\varphi$ respectively. Unfortunately, the answer is negative in general, as shown by the following example.
\end{remark}
\begin{example}
Let $f(q)=q^2$ and $ \varphi(q)=q^2I+qJ$ with $I,J\in \mathbb S$ and $I\perp J $. Then a straightforward calculation gives
$$f^{\odot}\varphi(q)=f_{\odot}\varphi(q)=q^2(qI+J)^{\ast 2}=-q^4-q^2,$$
and
$$f^{\circ}\varphi(q)=f_{\circ}\varphi(q)=-q^4-\frac23q^3IJ-q^2.$$

\end{example}

\begin{remark}
As mentioned in the end of the preceding section, for a general regular function $f$, the radii of convergence of power series in (\ref{Composition coefficents1}) are unknown
and it is quite possible that the definition domains of $f^{\circ}\varphi$ may be quite small than that of $\varphi$.

The advantages of new regular composition over
 Vlacci's is that $f_{\odot}\varphi$ has the same definition domain  with that of $\varphi$,  under the right hypotheses about the domain and range of $f$ and $\varphi$.
 Moreover, the definition domain of $\varphi$ is only required to be a symmetric slice domain other than a ball.
\end{remark}

\begin{remark}
The two  regular compositions $f^{\odot}\varphi$ and $f_{\odot}\varphi$ are different and both not, in general, associative, as the following example shown.
\end{remark}

\begin{example}
Let $f(q)=q^2$, $ g(q)=1+qI$ and $ \varphi(q)=qJ$ with $I,J\in \mathbb S$ and $I\perp J $. Then a straightforward calculation gives
$$(f^\odot g)^\odot\varphi(q)=q^2+2qJI+1\qquad and \qquad f^{\odot} \left(g^{\odot} \varphi \right)(q)=-q^2+2qJI+1,$$
while
$$(f_{\odot} g)_{\odot}\varphi(q)=q^2+2qIJ+1\qquad and \qquad f_{\odot} \left(g_{\odot} \varphi \right)(q)=-q^2+2qIJ+1,$$
which tell us  that
$$(f^\odot g)^\odot\varphi\neq  f^{\odot} \left(g^{\odot} \varphi \right)\qquad and \qquad (f_{\odot} g)_{\odot}\varphi\neq f_{\odot} \left(g_{\odot} \varphi \right).$$
Moreover, $$g^{\odot}\varphi(q)=1+qJI,\qquad while \qquad g_{\odot}\varphi(q)=1+qIJ,$$
which shows that $$g^{\odot}\varphi\neq g_{\odot}\varphi.$$
\end{example}

\begin{remark}
The quaternionic counterparts of the identity $$(fg)\circ\varphi=(f\circ\varphi)(g\circ\varphi)$$ do not always hold. Namely, in general,
$$(f\ast g)^{\odot}\varphi\neq(f^{\odot}\varphi)\ast(g^{\odot}\varphi),\qquad
(f\ast g)_{\odot}\varphi\neq(f_{\odot}\varphi)\ast(g_{\odot}\varphi),$$
as  shown by  the following example.
\end{remark}
\begin{example}
Let $f(q)=q^2I$, $ g(q)=1+qJ$, $ \varphi(q)=qIJ$ and $\psi(q)=q+I$ with $I,J\in \mathbb S$ and $I\perp J $. Then a straightforward calculation gives
$$(f\ast g)^{\odot}\varphi(q)=q^3-q^2I,\qquad (f\ast g)_{\odot}\psi(q)=q^3IJ+q^2(I+3J)-q(3IJ+2)-I-J,$$
while
$$(f^{\odot}\varphi)\ast(g^{\odot}\varphi)(q)=-q^3-q^2I,\quad (f_{\odot}\psi)\ast(g_{\odot}\psi)(q)=q^3IJ+q^2(I-J)+q(IJ-2)-I-J,$$
which shows that
$$(f\ast g)^{\odot}\varphi\neq(f^{\odot}\varphi)\ast(g^{\odot}\varphi),\qquad
(f\ast g)_{\odot}\psi\neq(f_{\odot}\psi)\ast(g_{\odot}\psi).$$
\end{example}

However, the following property holds.
\begin{proposition}\label{eq:RC-product}
Let $f,g$ be two regular functions on $B(0,R)$ and Let $\varphi:\Omega\rightarrow B(0,R)$ be regular on a symmetric slice domain. Then
$$(f\ast g)^{\odot}\varphi=
(f^{\odot}\varphi)\ast\big(g^{\odot}
((f^{\odot}\varphi)^{-\ast}\ast\varphi\ast(f^{\odot}\varphi))\big) \quad \mbox{on} \;\; \Omega\setminus \mathcal{Z}_{(f^{\odot}\varphi)^s},$$
and
$$(f\ast g)_{\odot}\varphi=\big(f_{\odot}
((g_{\odot}\varphi)\ast\varphi\ast(g_{\odot}\varphi)^{-\ast})\big)\ast(g_{\odot}\varphi) \quad \mbox{on} \;\; \Omega\setminus \mathcal{Z}_{(g_{\odot}\varphi)^s}.$$
\end{proposition}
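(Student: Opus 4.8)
The plan is to reduce both identities to one algebraic manipulation of $\ast$-products performed termwise on the power series, and then to legitimize the termwise operations using the absolute-convergence estimate already recorded in Remark \ref{eq:R1}. Writing $f(q)=\sum_n q^n a_n$ and $g(q)=\sum_m q^m b_m$, the $\ast$-product has the expansion $f\ast g(q)=\sum_n q^n c_n$ with $c_n=\sum_{k+l=n}a_k b_l$, so that $(f\ast g)^{\odot}\varphi=\sum_n \varphi^{\ast n}c_n$ and, dually, $(f\ast g)_{\odot}\varphi=\sum_n c_n\ast\varphi^{\ast n}$. For the first identity I set $F:=f^{\odot}\varphi$, which admits a $\ast$-inverse $F^{-\ast}=(F^s)^{-1}F^c$ exactly on $\Omega\setminus\mathcal{Z}_{F^s}$, and introduce there the conjugate $\psi:=F^{-\ast}\ast\varphi\ast F$.

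On $\Omega\setminus\mathcal{Z}_{F^s}$ I would first record the telescoping identity $\psi^{\ast l}=F^{-\ast}\ast\varphi^{\ast l}\ast F$, which follows by induction from associativity of $\ast$ and the cancellation $F\ast F^{-\ast}=1$. Substituting this into $g^{\odot}\psi=\sum_l\psi^{\ast l}b_l$ and $\ast$-multiplying by $F$ on the left, the factor $F\ast F^{-\ast}$ collapses to $1$ in every summand, leaving $F\ast(g^{\odot}\psi)=\sum_l(\varphi^{\ast l}\ast F)b_l$. Expanding $F=\sum_k\varphi^{\ast k}a_k$, using $\varphi^{\ast l}\ast\varphi^{\ast k}=\varphi^{\ast(l+k)}$, and regrouping the resulting double series by the total degree $n=k+l$, I recover $\sum_n\varphi^{\ast n}\sum_{k+l=n}a_k b_l=\sum_n\varphi^{\ast n}c_n=(f\ast g)^{\odot}\varphi$, which is the asserted equality. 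The second identity is the mirror image: one conjugates by $G:=g_{\odot}\varphi$, sets $\chi:=G\ast\varphi\ast G^{-\ast}$, uses $\chi^{\ast k}=G\ast\varphi^{\ast k}\ast G^{-\ast}$, and $\ast$-multiplies by $G$ on the right, invoking that the $\ast$-product of two constants is their ordinary product and that regular left multiplication by a constant is compatible with $\ast$.

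The step that needs genuine care, rather than formal bookkeeping, is the legitimacy of all of this on $\Omega\setminus\mathcal{Z}_{F^s}$. A priori $\psi$ need not send $\Omega\setminus\mathcal{Z}_{F^s}$ into $B(0,R)$, so $g^{\odot}\psi$ must be read as the series $\sum_l\psi^{\ast l}b_l$ and shown to converge there. The key estimate is again the inductive bound of Remark \ref{eq:R1}: for $q_0=x+yI$ the sphere $[q_0]=x+y\mathbb{S}$ lies in $\Omega$, on it $M_\varphi:=\max_{[q_0]}|\varphi|<R$, and applying the modulus rule from Proposition \ref{prop:RP} to the telescoped product gives $|\psi^{\ast l}(q)|\leq|F^{-\ast}(q)|\,\big(\max_{[q_0]}|F|\big)\,M_\varphi^{\,l}$ for every $q\in[q_0]$. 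Since the radius of convergence of $g$ is at least $R>M_\varphi$, the series $\sum_l\psi^{\ast l}b_l$ converges absolutely and uniformly on compact subsets of $\Omega\setminus\mathcal{Z}_{F^s}$; this both defines $g^{\odot}\psi$ and legitimizes pulling $F\ast(\cdot)$ inside the sum, the $\ast$-product being continuous for uniform convergence on compacta, together with the final rearrangement of the absolutely convergent double series. I expect this convergence-and-interchange bookkeeping to be the only real obstacle; once it is in place, both displayed identities drop out of the termwise computation.
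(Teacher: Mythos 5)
Your proposal is correct and follows essentially the same route as the paper's proof: both rest on the Cauchy-product expansion of $(f\ast g)^{\odot}\varphi$, the telescoping conjugation identity $\varphi^{\ast k}\ast(f^{\odot}\varphi)=(f^{\odot}\varphi)\ast\big((f^{\odot}\varphi)^{-\ast}\ast\varphi\ast(f^{\odot}\varphi)\big)^{\ast k}$, and a regrouping of the resulting double series, with the second identity obtained as the mirror image. The only differences are cosmetic or supplementary: you run the computation from right-hand side to left-hand side rather than the reverse, and you make explicit the convergence and rearrangement justifications (including how to read $g^{\odot}\psi$ when $\psi$ need not map into $B(0,R)$) that the paper treats purely formally.
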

\begin{proof}
Let $f,g:\mathbb B\rightarrow\mathbb H$ be as described with Taylor expansion of the form
$$ f(q)=\sum\limits_{n=0}^{\infty}q^n a_n,\qquad g(q)=\sum\limits_{n=0}^{\infty}q^n b_n$$
respectively. From the very definition,
\begin{equation*}
\begin{split}
(f\ast g)^{\odot}\varphi
&=\sum\limits_{n=0}^{\infty}\varphi^{\ast n}\bigg(\sum\limits_{k=0}^na_{n-k}b_k\bigg)
=\sum\limits_{k=0}^{\infty}\varphi^{\ast k}\ast\bigg(\sum\limits_{n=0}^{\infty}\varphi^{\ast n}a_n\bigg)b_k \\
&=\sum\limits_{k=0}^{\infty}\varphi^{\ast k}\ast(f^{\odot}\varphi)b_k
=(f^{\odot}\varphi)\ast\sum\limits_{k=0}^{\infty}
\Big((f^{\odot}\varphi)^{-\ast}\ast\varphi\ast(f^{\odot}\varphi)\Big)^{\ast k}b_k\\
&=(f^{\odot}\varphi)\ast\big(g^{\odot}
((f^{\odot}\varphi)^{-\ast}\ast\varphi\ast(f^{\odot}\varphi))\big)
\end{split}
\end{equation*}
The other one can also be proved similarly and an alternative way can be achieved immediately by applying the following proposition.
\end{proof}

In contrast to the local existence of Vlacci's composition as shown in
Theorem \ref{Vlacci-existence}, our slice regular compositions exists  globally.

\begin{proposition}\label{eq:RC-prop1} Let  $\Omega$ be a symmetric slice domain in $\mathbb H$ and  $0<R\leq\infty$.
For any two regular functions  $f:B(0,R)\rightarrow \mathbb H$ and $\varphi:\Omega\rightarrow B(0,R)$, we have
$f^{\odot}\varphi$ and $f_{\odot}\varphi$ exist and are regular on  $\Omega$ satisfying
\begin{eqnarray}\label{eq:RC-con100}
(f^{\odot}\varphi)^c={f^c}_{\odot}\varphi^c,\qquad \quad(f_{\odot}\varphi)^c=f^{c\,{\odot}}\varphi^c.
\end{eqnarray}
\end{proposition}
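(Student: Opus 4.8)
The plan is to treat the statement in two stages: first the existence and regularity of $f^{\odot}\varphi$ and $f_{\odot}\varphi$ on all of $\Omega$, and then the two conjugation identities.

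For existence and regularity I would upgrade the sphere-wise bound of Remark \ref{eq:R1} to uniform convergence on compact subsets. Given a compact $K\subset\Omega$, let $\widehat K$ be its circularization, the union of the $2$-spheres $[q_0]$ over $q_0\in K$; since $\Omega$ is a symmetric slice domain, $\widehat K$ is the continuous image of $K\times\mathbb S$ and hence a compact subset of $\Omega$. Put $M_K=\max_{q\in\widehat K}|\varphi(q)|<R$. The induction of Remark \ref{eq:R1} gives $|\varphi^{\ast n}(q)|\le M_K^n$ on $K$, so the Weierstrass $M$-test with $\sum_n|a_n|M_K^n<\infty$ yields uniform convergence of both series on $K$. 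Each partial sum is regular, and a uniform-on-compacts limit of regular functions on a slice domain is again regular (apply the Splitting Lemma and the classical Weierstrass theorem to the holomorphic components on each slice), so $f^{\odot}\varphi$ and $f_{\odot}\varphi$ are regular on $\Omega$.

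For the identities I would invoke three standard facts about regular conjugation (recorded in \cite{Co4,GS5,GS6}): (i) conjugation acts on coefficients, $\big(\sum q^n c_n\big)^c=\sum q^n\overline{c_n}$, so $f^c(q)=\sum q^n\overline{a_n}$ and a constant $a$, viewed as a regular function, has conjugate $\overline{a}$; (ii) conjugation reverses $\ast$-products, $(g\ast h)^c=h^c\ast g^c$, whence $(\varphi^{\ast n})^c=(\varphi^c)^{\ast n}$; and (iii) right multiplication by a constant is the $\ast$-product with that constant on the right, $\varphi^{\ast n}(q)a_n=\varphi^{\ast n}\ast a_n$, while $a_n\ast\varphi^{\ast n}$ is the $\ast$-product with the constant on the left. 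With these, the computation is term by term: applying (ii) and then (i),
$$(\varphi^{\ast n}\ast a_n)^c=\overline{a_n}\ast(\varphi^{\ast n})^c=\overline{a_n}\ast(\varphi^c)^{\ast n}.$$
Summing, and using the $\mathbb R$-linearity and continuity of $h\mapsto h^c$ to pass conjugation through the uniformly convergent series, gives
$$(f^{\odot}\varphi)^c=\sum_{n=0}^{\infty}\overline{a_n}\ast(\varphi^c)^{\ast n}={f^c}_{\odot}\varphi^c,$$
since $\overline{a_n}$ are precisely the coefficients of $f^c$. The second identity is entirely symmetric: $(a_n\ast\varphi^{\ast n})^c=(\varphi^c)^{\ast n}\ast\overline{a_n}=(\varphi^c)^{\ast n}(q)\,\overline{a_n}$, and summing yields $f^{c\,{\odot}}\varphi^c$.

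The main obstacle I expect is the careful bookkeeping of non-commutativity: the anti-homomorphism property (ii) reverses the order of the factors, so a right multiplication by $a_n$ becomes a left $\ast$-multiplication by $\overline{a_n}$, which is exactly what converts a $\odot$-composition into a $_\odot$-composition of the conjugates. A secondary technical point is justifying the interchange of conjugation with the infinite sum; this is handled by the uniform-on-compacts convergence from the first stage together with the slice-wise continuity of conjugation. Should one wish to avoid the continuity argument, an alternative is to observe that both sides are regular on $\Omega$ and to invoke the Identity Principle (Theorem \ref{th:IP-theorem}), reducing the equality to a neighborhood of the real axis where the power-series manipulation is literal.
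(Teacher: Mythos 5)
Your first stage (existence and regularity of $f^{\odot}\varphi$ and $f_{\odot}\varphi$) is sound, and in fact more detailed than the paper, which delegates this point to Remark \ref{eq:R1}; your term-by-term computation $(\varphi^{\ast n}\ast a_n)^c=\overline{a_n}\ast(\varphi^c)^{\ast n}$ and its mirror image is also exactly the paper's. The genuine gap is at the final identification $\sum_{n}\overline{a_n}\ast(\varphi^c)^{\ast n}={f^c}_{\odot}\varphi^c$. For the right-hand side to be a regular composition in the sense of the paper's definition of $\odot$ at all, $\varphi^c$ must be an admissible inner function, i.e. one must know $\varphi^c(\Omega)\subseteq B(0,R)$. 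Your continuity argument shows that the conjugated series converges locally uniformly (because it is the image of a convergent series under the continuous involution $h\mapsto h^c$), but that is not the same as showing $\varphi^c$ maps $\Omega$ into $B(0,R)$; without that, the symbols ${f^c}_{\odot}\varphi^c$ and $f^{c\,\odot}\varphi^c$ are not covered by the definition, nor by the well-definedness argument of Remark \ref{eq:R1}. This range condition is not a formality: it is precisely what the paper uses downstream (for instance in Theorem \ref{eq:lst} and Corollary \ref{eq:RC-norm}, where norm bounds for the composition operator induced by $\varphi^c$ are invoked, presupposing that $\varphi^c$ is again a self-map of the ball).

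The missing step is in fact the heart of the paper's proof, and it is short: suppose $\varphi^c(p)=a$ with $|a|\geq R$ for some $p=x+yI\in\Omega$. Then $\varphi^c-a$ vanishes at $p$, so by the zero correspondence of Proposition \ref{prop:zero set} its conjugate $(\varphi^c-a)^c=\varphi-\overline{a}$ vanishes at some $\widetilde{p}\in x+y\mathbb S\subseteq\Omega$; thus $\varphi(\widetilde{p})=\overline{a}$ with $|\overline{a}|\geq R$, contradicting $\varphi(\Omega)\subseteq B(0,R)$. Note that you cannot instead cite Proposition \ref{prop:conjugate}, which is stated only for the unit ball, and the extension Proposition \ref{prop:conjugate1} is derived in the paper from the present proposition, so that route would be circular. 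With this step inserted before your summation, your argument is complete; your explicit justification of interchanging conjugation with the infinite sum is a point the paper leaves implicit, and is a worthwhile addition.
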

\begin{proof}
First, we prove that both $f^{c\,{\odot}}\varphi^c$ and ${f^c}_{\odot}\varphi^c$ are well-defined on $\Omega$. According to Remark \ref{eq:R1}, it suffices to prove that $\varphi^c(\Omega)\subseteq B(0,R)$. Indeed, suppose $\varphi^c(p)=a \in \mathbb H\backslash B(0,R)$ for some $p=x+Iy \in \Omega$. Then $p$ is a zero of the regular function $\varphi^c-a$. By Proposition \ref{prop:zero set}, there exists $\widetilde{p}\in x+y\mathbb S\subseteq \Omega$ such that $(\varphi^c-a)^c=\varphi-\overline{a}$ vanishes at $\widetilde{p}$. Hence, $\varphi(\Omega)$ includes $\overline{a}\in \mathbb H\backslash B(0,R)$, which is a contradiction with the hypothesis $\varphi(\Omega)\subseteq B(0,R)$.

Let $\sum\limits_{n=0}^{\infty}q^n a_n $ be power series expansion of $f$, i.e.
$ f(q)=\sum\limits_{n=0}^{\infty}q^n a_n$.
According to the definition of regular composition,
\begin{eqnarray}\label{eq:RC-con1}
(f^{\odot}\varphi)^c=\sum\limits_{n=0}^{\infty} (\varphi^{\ast n}a_n)^c=\sum\limits_{n=0}^{\infty} \bar{a}_n\ast(\varphi^c)^{\ast n}=f^c_{\odot}\varphi^c,
\end{eqnarray}
and
\begin{eqnarray}\label{eq:RC-con2}
(f_{\odot}\varphi)^c=\sum\limits_{n=0}^{\infty} (a_n\ast\varphi^{\ast n})^c=\sum\limits_{n=0}^{\infty}(\varphi^c)^{\ast n}\bar{a}_n=f^{c\,{\odot}}\varphi^c
\end{eqnarray}
as desired.
\end{proof}

 The  approach in the proof of Theorem \ref{eq:RC-prop1} also immediately works to extend  Proposition \ref{prop:conjugate} from the unit ball to symmetric slice domains.
\begin{proposition}\label{prop:conjugate1}
Let $f:\Omega_1\rightarrow\Omega_2$ be a regular function with $\Omega_i\subseteq \mathbb H (i=1,2)$ two symmetric slice domains. Then  the regular conjugate function
 $f^c:\Omega_1\rightarrow \mathbb H$ has the following properties.

$(\emph{i})$\  $f^c$
is regular satisfying $f^c(\Omega_1)\subset \Omega_2$.

$(\emph{ii})$\  $f^c$ is bijective if and only if $f$ is.

$(\emph{iii})$\ For any $x, y\in\mathbb R$ such that  $x+y\mathbb S\subset \Omega_1$, $$\sup\limits_{I\in\mathbb S}\big|f(x+yI)\big|=\sup\limits_{I\in\mathbb S}\big|f^c(x+yI)\big|, \qquad \inf\limits_{I\in\mathbb S}\big|f(x+yI)\big|=\inf\limits_{I\in\mathbb S}\big|f^c(x+yI)\big|$$

$(\emph{iv})$\ There holds the identity
$$\sup\limits_{q\in\Omega_1}\big|f(q)\big|=\sup\limits_{q\in\Omega_1}\big|f^c(q)\big|,\qquad \inf\limits_{q\in\Omega_1}\big|f(q)\big|=\inf\limits_{q\in\Omega_1}\big|f^c(q)\big|.$$
\end{proposition}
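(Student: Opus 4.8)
The plan is to treat the four assertions in turn, using Proposition~\ref{prop:zero set} as the engine, exactly in the spirit of the proof of Proposition~\ref{eq:RC-prop1}. For (i), the regularity of $f^c$ is already built into its construction (Definition~\ref{de: R-conjugate}), so the only content is the inclusion $f^c(\Omega_1)\subseteq\Omega_2$. First I would argue by contradiction, mimicking Proposition~\ref{eq:RC-prop1}: suppose $f^c(p)=a$ for some $p=x+yI\in\Omega_1$ with $a\notin\Omega_2$. Since $\Omega_2$ is a symmetric slice domain, $a\notin\Omega_2$ forces $\bar a\notin\Omega_2$. Now $p$ is a zero of the regular function $f^c-a$, and because regular conjugation is additive and sends a constant $a$ to $\bar a$, we have $(f^c-a)^c=f-\bar a$; applying Proposition~\ref{prop:zero set} to $f^c-a$ then produces a zero of $f-\bar a$ on the sphere $x+y\mathbb S\subseteq\Omega_1$, so $f$ attains the value $\bar a\notin\Omega_2$, contradicting $f(\Omega_1)\subseteq\Omega_2$.

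The central observation, from which (iii) is immediate, is that for a fixed sphere $x+y\mathbb S\subseteq\Omega_1$ and a constant $a\in\mathbb H$, the function $f$ attains $a$ somewhere on $x+y\mathbb S$ if and only if $f^c$ attains $\bar a$ somewhere on $x+y\mathbb S$. Indeed, applying Proposition~\ref{prop:zero set} to the regular function $f-a$ and using $(f-a)^c=f^c-\bar a$, the zeros of $f-a$ and of $f^c-\bar a$ on $x+y\mathbb S$ are in one-to-one correspondence. Consequently
$$f^c(x+y\mathbb S)=\overline{f(x+y\mathbb S)},$$
and since conjugation preserves modulus, the two modulus sets $\{|f(x+yI)|:I\in\mathbb S\}$ and $\{|f^c(x+yI)|:I\in\mathbb S\}$ coincide; in particular their suprema and infima agree, which is exactly (iii) (the unit-ball statement being Proposition~\ref{prop:conjugate}). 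The real points of $\Omega_1$ are covered by the degenerate case $y=0$, where $f^c(x)=\overline{f(x)}$ directly.

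Assertion (iv) would then follow by taking suprema (respectively infima) of (iii) over all spheres $x+y\mathbb S\subseteq\Omega_1$, since every point of the symmetric slice domain $\Omega_1$ lies on such a sphere. For (ii), surjectivity is the easy half: from $f^c(x+y\mathbb S)=\overline{f(x+y\mathbb S)}$ one obtains $f^c(\Omega_1)=\overline{f(\Omega_1)}$, so if $f(\Omega_1)=\Omega_2$ then $f^c(\Omega_1)=\overline{\Omega_2}=\Omega_2$ by symmetry of $\Omega_2$, and the converse uses $(f^c)^c=f$. Injectivity is the delicate part: assuming $f$ injective, suppose $f^c(p_1)=f^c(p_2)=a$ with $p_1\neq p_2$. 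If $p_1,p_2$ lie on distinct spheres, the central observation shows $f$ attains $\bar a$ on each of these spheres, yielding two distinct preimages of $\bar a$ and contradicting injectivity. If they lie on the same sphere, then $f^c-a$ has two distinct zeros there; since a regular function that does not vanish identically on a sphere has at most one zero on it, $f^c-a$ must vanish on the whole sphere, whence by Proposition~\ref{prop:zero set} so does $f-\bar a$, again contradicting injectivity of $f$. The symmetric argument with the roles of $f$ and $f^c$ interchanged completes (ii).

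The main obstacle I anticipate is precisely the injectivity step of (ii): it hinges on the structure fact that the zeros of a regular function on a single sphere $x+y\mathbb S$ form either the empty set, a single point, or the entire sphere. I would establish this via the Representation Formula (Theorem~\ref{eq:formula}): writing the restriction in the form $g(x+yJ)=u+JIw$, with $u,w$ determined by the values of $g$ at $x\pm yI$, the equation $u+JIw=0$ admits at most one solution $J\in\mathbb S$ when $w\neq0$, and it forces the whole sphere precisely when $w=0$ and $u=0$. Everything else reduces to routine bookkeeping with Proposition~\ref{prop:zero set} and the involutivity of regular conjugation.
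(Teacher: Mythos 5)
Your proposal is correct and follows exactly the approach the paper intends: the paper's entire proof is the one-line remark that the method of Proposition \ref{eq:RC-prop1} --- applying Proposition \ref{prop:zero set} to translates $f-a$ together with the identity $(f^c-a)^c=f-\bar a$ --- extends Proposition \ref{prop:conjugate} to symmetric slice domains, and this sphere-by-sphere value correspondence $f^c(x+y\mathbb S)=\overline{f(x+y\mathbb S)}$ is precisely your central observation. The only place you go beyond what the paper records is part (ii), where you correctly supply the extra structure fact (a regular function has at most one zero on a sphere $x+y\mathbb S$ unless it vanishes on the whole sphere, provable via the Representation Formula) needed for the injectivity half, a detail the paper leaves implicit.
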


\section{The Denjoy-Wolff type theorem}
The dynamical behaviors of the iterates of regular self-mappings $f$ of $\mathbb B$
is considered in this section, under the extra assumption that $f$ preserves at least one slice. The limit of the iterates turns out to be the regular M\"{o}bius transformation of the unit ball.

In view of Theorem \ref{Denjoy-Wolff} below, it is useful to recall the following definition given in \cite{GS55} and there Gentili and Vlacci provide  a complete description of the fixed-point set for regular M\"{o}bius transformations of a quaternionic variable.

\begin{definition}
A regular M\"{o}bius transformation of $\mathbb B$ with only one fixed point in $\mathbb B$ is called \textit{elliptic}.

A regular M\"{o}bius transformation of $\mathbb B$ without fixed points in $\mathbb B$ is called:
\begin{itemize}
  \item \textit{parabolic} if it has only one fixed point on  the boundary of $\mathbb B$;
  \item \textit{hyperbolic} if it has at least two fixed points on the boundary of $\mathbb B$.
\end{itemize}
In particular, a hyperbolic regular M\"{o}bius transformation of $\mathbb B$ with a sphere of fixed points on the boundary of $\mathbb B$ is called \textit{spherical-hyperbolic}.
\end{definition}

It is crucial to show the existence of iterates of regular self-mappings with respect to our slice regular compositions.

\begin{theorem}
Let $f$ be a regular self-mapping of $\mathbb B$ such that $f(\mathbb B_I)\subseteq \mathbb B_I $ for some $I\in \mathbb S $, then $f^{\odot n}$ is well defined as a regular self-mapping of $\mathbb B$ and $$f^{\odot n}={\rm{ext}}(f_I)^n$$ for any $n\in \mathbb N$.
\end{theorem}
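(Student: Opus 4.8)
The plan is to exploit the hypothesis $f(\mathbb B_I)\subseteq\mathbb B_I$ to show that, on the single slice $\mathbb B_I$, the regular composition $\odot$ collapses to the ordinary one-variable holomorphic composition, and then to transport this to all of $\mathbb B$ by uniqueness of the regular extension. First I would record two elementary consequences of the hypothesis. Since $0\in\mathbb B_I$ and $f^{(n)}(0)=\partial_I^n f(0)$ is computed along $\mathbb C_I$, the restriction $f_I$ is a $\mathbb C_I$-valued holomorphic map on $\mathbb B_I\cong\mathbb D$, so its Taylor coefficients at $0$, which are exactly the $a_n$ of $f(q)=\sum_n q^n a_n$, all lie in $\mathbb C_I$. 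Moreover, for any regular $\varphi$ that also preserves $\mathbb C_I$ and any $q\in\mathbb B_I$, Proposition~\ref{prop:RP} together with the commutativity of $\mathbb C_I$ yields $\varphi^{\ast n}(q)=\varphi(q)^n$: writing $\varphi^{\ast(k+1)}=\varphi\ast\varphi^{\ast k}$ and using $\varphi(q)^{-1}q\,\varphi(q)=q$ for the commuting pair $q,\varphi(q)\in\mathbb C_I$, an induction gives $\varphi\ast\varphi^{\ast k}(q)=\varphi(q)\,\varphi^{\ast k}(q)=\varphi(q)^{k+1}$.

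Combining these facts, for $q\in\mathbb B_I$ I obtain
$$f^{\odot}\varphi(q)=\sum_{n=0}^{\infty}\varphi^{\ast n}(q)\,a_n=\sum_{n=0}^{\infty}a_n\,\varphi(q)^n=f_I\big(\varphi(q)\big),$$
where the middle equality uses $a_n,\varphi(q)\in\mathbb C_I$. Thus $\big(f^{\odot}\varphi\big)_I=f_I\circ\varphi_I$ on $\mathbb B_I$. This is the crux of the argument; it also shows that the non-associativity of $\odot$ is invisible on $\mathbb C_I$, since ordinary composition of disk self-maps is associative, so the bracketing used to define the iterate $f^{\odot n}$ will be immaterial.

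With this reduction I would prove by induction on $n$ that $f^{\odot n}$ is a regular self-mapping of $\mathbb B$ preserving $\mathbb C_I$, with $\big(f^{\odot n}\big)_I=(f_I)^{\circ n}$, the $n$-fold ordinary iterate of $f_I$. The case $n=1$ is immediate. For the inductive step set $\varphi:=f^{\odot n}$; by the inductive hypothesis $\varphi:\mathbb B\to\mathbb B$, so Proposition~\ref{eq:RC-prop1} guarantees that $f^{\odot(n+1)}=f^{\odot}\varphi$ is well defined and regular on $\mathbb B$. The slice identity above gives $\big(f^{\odot(n+1)}\big)_I=f_I\circ(f_I)^{\circ n}=(f_I)^{\circ(n+1)}$, which maps $\mathbb B_I$ into $\mathbb B_I$. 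By the Extension Lemma (Lemma~\ref{eq:Extension}) and the Identity Principle (Theorem~\ref{th:IP-theorem}), $f^{\odot(n+1)}$ is the unique regular extension of its restriction, whence $f^{\odot(n+1)}=\mathrm{ext}\big((f_I)^{\circ(n+1)}\big)$.

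The main obstacle is the remaining claim that $f^{\odot(n+1)}$ sends \emph{all} of $\mathbb B$, not merely the slice $\mathbb B_I$, into $\mathbb B$. Here I would use the Representation Formula (Theorem~\ref{eq:formula}) to control the modulus off the slice. Writing $g=f^{\odot(n+1)}$, $a=g(x+yI)$, $b=g(x-yI)$ for a sphere $x+y\mathbb S\subset\mathbb B$, the formula gives $g(x+yJ)=\tfrac12(1-JI)a+\tfrac12(1+JI)b$. A direct computation with $a,b\in\mathbb C_I$—using $IJ+JI=-2\langle I,J\rangle$, that $\tfrac14(1-IJ)(1-JI)=\tfrac{1+\langle I,J\rangle}{2}$ is a real scalar, and that the mixed terms cancel because $a,b$ commute—reduces to the clean identity
$$\big|g(x+yJ)\big|^2=\frac{1+\langle I,J\rangle}{2}\,|a|^2+\frac{1-\langle I,J\rangle}{2}\,|b|^2,$$
a convex combination of $|a|^2$ and $|b|^2$. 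Since $a,b=(f_I)^{\circ(n+1)}(x\pm yI)\in\mathbb B_I$ satisfy $|a|,|b|<1$, it follows that $|g(x+yJ)|<1$ for every $J\in\mathbb S$, i.e. $g(\mathbb B)\subseteq\mathbb B$. This closes the induction and establishes both assertions. I expect the modulus identity to be the only genuinely computational point; everything else is a formal consequence of the slice collapse and the uniqueness of regular extensions.
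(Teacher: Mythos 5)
Your proposal is correct, and its overall strategy --- collapse the regular composition to ordinary holomorphic composition on the preserved slice, then identify $f^{\odot n}$ with $\mathrm{ext}\big((f_I)^n\big)$ by uniqueness of the regular extension --- is the same as the paper's. The differences are in execution, and they make your argument more self-contained. Where the paper justifies the inclusion $\mathrm{ext}(f_I)^n(\mathbb B)\subseteq\mathbb B$ by citing a convex combination identity from Lemma 3.3 of \cite{RW}, you derive exactly that identity from the Representation Formula (Theorem \ref{eq:formula}); your computation checks out: writing $g(x+yJ)=\tfrac12(1-JI)a+\tfrac12(1+JI)b$ with $a,b\in\mathbb C_I$, the mixed terms have vanishing real part, so $|g(x+yJ)|^2$ is indeed the stated convex combination of $|a|^2$ and $|b|^2$. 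Where the paper defines the iterated composition only formally on a small neighborhood $\mathcal O_n$ of $0$ and then extends, your induction keeps $f^{\odot n}$ globally defined at every step: the inductive hypothesis that $f^{\odot n}$ is a self-map of $\mathbb B$ lets Proposition \ref{eq:RC-prop1} (or Remark \ref{eq:R1}) produce $f^{\odot(n+1)}$ directly on all of $\mathbb B$, which is cleaner. You also spell out, via Proposition \ref{prop:RP} and commutativity in $\mathbb C_I$, why $\varphi^{\ast n}(q)=\varphi(q)^n$ on the slice and hence why $(f^{\odot}\varphi)_I=f_I\circ\varphi_I$ --- the precise point the paper passes over with ``from the very definition.'' One caveat: like the paper, you treat the independence of $f^{\odot n}$ from the choice of bracketing informally; a fully rigorous version would run the induction over all parenthesizations, applying your slice-collapse identity to an arbitrary pair of slice-preserving factors, but this is exactly the level of detail the paper itself omits, so it is not a gap relative to the original proof.
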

\begin{proof}
Let $f:\mathbb B\rightarrow\mathbb B$ be given  as described.

First, we prove that ${\rm{ext}}(f_I)^n$ is well defined as a regular self-mapping of $\mathbb B$ for any $n\in \mathbb N$. Indeed, since the restriction $f_I$ of $f$ to $\mathbb B_I$
is a holomorphic self-mapping of the open unit disc $\mathbb B_I\subset \mathbb C_I$,
then we can consider the iterates $(f_I)^n$ of $f_I$, where $(f_I)^1=f_I$ and $$(f_I)^{n+1}=f_I\circ(f_I)^n, \qquad n=1,2,\cdots.$$ Consequently, for each $n\in \mathbb N$, $(f_I)^n$ is a holomorphic self-mapping of $\mathbb B_I$, which naturally induces a regular self-mapping of $\mathbb B$, say ${\rm{ext}}(f_I)^n$, by using regular extension. The assertion that ${\rm{ext}}(f_I)^n(\mathbb B)\subseteq \mathbb B$ follows from a convex combination identity in Lemma 3.3 of \cite{RW}.

Next, we prove that  $f^{\odot n}$ is well defined  and $$f^{\odot n}={\rm{ext}}(f_I)^n$$ for any $n\in \mathbb N$. The reasons are as following. We can formally define in any order the $n$-th regular composition of $f$, on some small neighborhood $\mathcal{O}_n\subset \mathbb B$ of $0$, via (\ref{de:definition of RC}), and then it suffices to prove that the $n$th regular composition in any order can regularly extend to $\mathbb B$ and coincides with ${\rm{ext}}(f_I)^n$ there.

For simplicity, we only consider the $n$th regular composition $$\underbrace{f^{\odot}(\cdots(f^{\odot}(f^{\odot}f)))}_{n \,\,\mbox{copies}},$$ and from the very definition its restriction $$(f^{\odot}(\cdots(f^{\odot}(f^{\odot}f))))_I$$ to $(\mathcal{O}_n)_I$ coincides with $(f_I)^n$ there, but $(f_I)^n$ and its regular extension ${\rm{ext}}(f_I)^n$ are well defined on $\mathbb B_I$ and $\mathbb B$, respectively. Therefore, we can regularly extend $f^{\odot}(\cdots(f^{\odot}(f^{\odot}f)))$ to the whole ball $\mathbb B$ and $$f^{\odot}(\cdots(f^{\odot}(f^{\odot}f)))={\rm{ext}}(f_I)^n.$$

Similarly, we can use the same arguments as before to prove that the $n$th regular composition in any other order is well defined on $\mathbb B$ and coincides with ${\rm{ext}}(f_I)^n$ as well. Thus we can denote by $f^{\odot n}$ the well defined $n$th regular composition for any $n\in \mathbb N$.
\end{proof}

Now we consider the slice iterates.

\begin{proposition}\label{dynamic behavior}
Let $f$ be a regular self-mapping such that $f(\mathbb B_I)\subseteq \mathbb B_I $ for some $I\in \mathbb S $ and suppose $\{f^{\odot n}\}$ has a subsequence which converges to a nonconstant function. Then $f$ is a regular M\"{o}bius transformation of $\mathbb B$.
\end{proposition}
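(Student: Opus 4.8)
The plan is to push the whole problem down to the single slice $\mathbb B_I$, resolve it there with one–variable complex dynamics, and then lift the conclusion back to $\mathbb B$ by regular extension. The starting point is the preceding theorem, which identifies the $\odot$-iterates with the regular extensions of the classical holomorphic iterates: $f^{\odot n}=\mathrm{ext}(f_I)^n=\mathrm{ext}\big((f_I)^n\big)$, where $f_I$ is the restriction of $f$ to $\mathbb B_I$, a holomorphic self-map of the disc $\mathbb B_I\subset\mathbb C_I$. Assume $\{f^{\odot n_k}\}$ converges uniformly on compact subsets of $\mathbb B$ to a nonconstant regular function $g$. Restricting to the slice gives $(f_I)^{n_k}=(f^{\odot n_k})_I\to g_I$ on $\mathbb B_I$, and I claim $g_I$ is nonconstant: otherwise $g$ would agree with a constant on $\mathbb B_I$, whose zero set has an accumulation point in $\mathbb B_I$, so the Identity Principle (Theorem \ref{th:IP-theorem}) would force $g$ to be globally constant, contrary to hypothesis. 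Thus the iterates of the disc self-map $f_I$ admit a subsequence converging to a nonconstant holomorphic function.

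The core step is then to deduce that $f_I$ must be an automorphism of $\mathbb B_I$. The classical input is the Denjoy–Wolff theorem together with the Schwarz–Pick lemma. If $f_I$ were \emph{not} an automorphism, there are two cases. If $f_I$ has an interior fixed point $z_0$, the Schwarz Lemma (Theorem \ref{th:SL-theorem}), applied after conjugating $z_0$ to the origin, gives $|f_I'(z_0)|<1$, so $z_0$ is attracting and the full sequence $(f_I)^n$ converges to the constant $z_0$ uniformly on compacta. If $f_I$ has no interior fixed point, the classical Denjoy–Wolff theorem produces a boundary point $p$ with $(f_I)^n\to p$ uniformly on compacta. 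In either case every subsequential limit of the iterates is constant, contradicting the previous paragraph. Hence $f_I$ is a holomorphic automorphism of $\mathbb B_I$, i.e. a classical M\"obius transformation of the disc.

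It remains to lift this to $\mathbb B$. By uniqueness of the regular extension we have $f=\mathrm{ext}(f_I)$, and since $f_I$ is a disc automorphism with parameters in $\mathbb C_I$, its regular extension agrees on $\mathbb C_I$ with the regular M\"obius transformation of $\mathbb B$ whose slice restriction is exactly $f_I$; by the description of regular M\"obius transformations and their fixed-point structure in \cite{GS55}, this extension is again a regular M\"obius transformation of $\mathbb B$. Therefore $f$ is a regular M\"obius transformation, as asserted.

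I expect the genuine obstacle to lie not in the one–variable dynamics (which is standard once the reduction is in place) but in the two transfer steps. One must argue carefully that uniform-on-compacta convergence and, above all, \emph{non-constancy} are correctly transported between $\mathbb B$ and the slice $\mathbb B_I$ --- this is where the Representation Formula (Theorem \ref{eq:formula}) and the Identity Principle are used --- and that the extension of a mere disc automorphism of one slice is genuinely a regular M\"obius transformation of the whole ball, rather than just a regular self-map happening to preserve $\mathbb C_I$.
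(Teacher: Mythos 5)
Your proof is correct and follows essentially the same route as the paper: both reduce to the slice via $f^{\odot n}=\mathrm{ext}\big((f_I)^n\big)$, conclude that $f_I\in \mathrm{Aut}(\mathbb B_I)$ from the classical theory of iteration on the disc, and lift back to $\mathbb B$ by uniqueness of the regular extension. The only difference is one of packaging: the paper cites Lemma 2.50 of \cite{Cowen} for the classical statement, whereas you re-derive it from the Schwarz lemma and the classical Denjoy--Wolff theorem, and you make explicit (via the Identity Principle) the transfer of non-constancy to the slice, a detail the paper leaves implicit.
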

\begin{proof}
Let $f:\mathbb B\rightarrow\mathbb B$ be as described. Then the sequence $\{(f^{\odot n})_I\}=\{(f_I)^n\}$ satisfies the assumption given in Lemma 2.50 of \cite{Cowen}, thus $f_I\in Aut(\mathbb B_I)$ and its regular extension $f={\rm{ext}}(f_I)$ is a regular M\"{o}bius transformation of $\mathbb B$.
\end{proof}

Moreover, we have the following Denjoy-Wolff type theorem.
\begin{theorem}{\bf(Denjoy-Wolff Type Theorem)}\label{Denjoy-Wolff}
Let $Id\neq f\in Aut(\mathbb B)$ be not elliptic automorphism in which $f(\mathbb B_I)\subseteq \mathbb B_I $ for some $I\in \mathbb S $, then the sequence of  $\{f^{\odot n}\}$ uniformly converges on every compact subset of $\mathbb B$ to a boundary fixed point of $f$.
\end{theorem}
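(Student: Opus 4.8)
The plan is to reduce the quaternionic Denjoy-Wolff statement to the classical complex Denjoy-Wolff theorem on the disc, exploiting the slice-preservation hypothesis. The crucial structural fact, already established in the theorem immediately preceding, is that $f^{\odot n} = {\rm ext}(f_I)^n$, so the iterate on $\mathbb B$ is completely determined by the iterate of the holomorphic restriction $f_I$ on the disc $\mathbb B_I \subset \mathbb C_I$. By Proposition \ref{dynamic behavior} and the hypothesis $f \in Aut(\mathbb B)$, the restriction $f_I$ is a disc automorphism of $\mathbb B_I$. The strategy therefore has two layers: first analyze $(f_I)^n$ on the slice $\mathbb B_I$ via classical theory, then transfer the convergence to all of $\mathbb B$ through the regular extension.

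First I would invoke the classical Denjoy-Wolff theorem for $\mathbb B_I \cong \mathbb D$. Since $f_I$ is a disc automorphism and $f \neq Id$ is \emph{not} elliptic, the corresponding automorphism $f_I$ has no fixed point inside $\mathbb B_I$ (an interior fixed point of $f_I$ would be a fixed point of $f$ in $\mathbb B$, forcing $f$ to be elliptic). The classical Denjoy-Wolff theorem then guarantees a unique boundary point $\tau \in \partial \mathbb B_I$ such that $(f_I)^n \to \tau$ uniformly on compact subsets of $\mathbb B_I$. This $\tau$ is precisely a boundary fixed point of $f$, matching the conclusion. The parabolic/hyperbolic dichotomy recalled in the definition before the theorem is exactly the non-elliptic case, so the classical hypothesis is met.

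Next I would lift this slice convergence to $\mathbb B$ using the Representation Formula (Theorem \ref{eq:formula}) together with the identity $f^{\odot n} = {\rm ext}(f_I)^n$. For any $q = x + yJ \in \mathbb B$, the value $f^{\odot n}(q)$ is an explicit affine combination of $(f_I)^n(x+yI)$ and $(f_I)^n(x-yI)$; writing out the formula, $f^{\odot n}(x+yJ) = \tfrac12\big((f_I)^n(x+yI) + (f_I)^n(x-yI)\big) + \tfrac12 JI\big((f_I)^n(x-yI) - (f_I)^n(x+yI)\big)$. As $n \to \infty$, both $(f_I)^n(x\pm yI) \to \tau$ uniformly on compact subsets of $\mathbb B_I$, so the second difference tends to $0$ and the first average tends to $\tau$. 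Hence $f^{\odot n}(q) \to \tau$ for every $q$, and the convergence is uniform on compact subsets of $\mathbb B$ because a compact $K \subset \mathbb B$ projects into a compact subset of $\mathbb B_I$ under $x+yJ \mapsto x \pm yI$ and the extension formula is a fixed continuous (indeed bounded-coefficient) operation on these slice values.

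\textbf{The main obstacle} I anticipate is twofold. First, one must verify that the uniform-on-compacta convergence genuinely transfers through the extension: the map $x+yJ \mapsto (x+yI, x-yI)$ sends a compact subset of $\mathbb B$ into a compact subset of $\mathbb B_I \times \mathbb B_I$, but care is needed near the real axis and in controlling the factor $JI$ uniformly over $J \in \mathbb S$ — fortunately $|JI| = 1$ for all $J, I \in \mathbb S$, so this factor causes no blow-up. Second, and more delicate conceptually, is confirming that the limiting constant $\tau$ obtained from the \emph{non-elliptic} classical theorem is indeed independent of the slice and qualifies as a genuine boundary fixed point of the quaternionic map $f$; this requires noting that $\tau \in \partial \mathbb B_I \subset \partial \mathbb B$ and that $f(\tau) = \tau$ follows by continuity from $f_I(\tau) = \tau$ in the classical sense (understood as a radial/non-tangential boundary fixed point). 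Everything else is a routine transfer, so the proof is essentially the classical theorem composed with the Representation Formula.
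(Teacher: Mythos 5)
Your proof is correct and takes essentially the same approach as the paper: both reduce to the slice via $f^{\odot n}={\rm ext}\big((f_I)^n\big)$, apply the classical Denjoy-Wolff theorem (Theorem 2.51 of \cite{Cowen}) to the iterates $(f_I)^n$ on $\mathbb B_I$, and then lift the locally uniform convergence to all of $\mathbb B$. The only difference is in the lifting step, where the paper's primary argument invokes a convex combination identity from Lemma 3.3 of \cite{RW}, while you use the Representation Formula --- which is precisely the alternative the paper itself mentions at the end of its proof.
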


\begin{proof}
For any fixed compact subset $C\subset\mathbb B$, there exists $r\in(0,1)$ such that $C\subseteq \overline{B(0,r)} \subset \mathbb B$. By assumption, the sequence $\{(f^{\odot n})_I\}=\{(f_I)^n\}$ satisfies the assumption given in Theorem 2.51 of \cite{Cowen}, thus $\{(f_I)^n\}$ uniformly converges  on compact set $\overline{(B(0,r))_I}\subset\mathbb B_I$ to a boundary fixed point of $f$. By a convex combination identity from Lemma 3.3 of \cite{RW}, $$\max\limits_{q\in\overline{B(0,r)}}|f(q)-a|=\max\limits_{z\in\overline{(B(0,r))_I}}|f(z)-a|,$$
which gives the uniform convergence of $\{f^{\odot n}\}$ on $C$, since $C\subseteq \overline{B(0,r)}$.

Alternatively, the uniform convergence of $\{f^{\odot n}\}$ on $C\subset\mathbb B$ can follow from the representation formula.

\end{proof}

\section{The Littlewood subordination principle}
In this section we establish the Littlewood subordination principle for regular functions.

\begin{theorem}{\bf(Littlewood Subordination Principle)}\label{th:LST-theorem}
Let $\varphi:\mathbb B\rightarrow \mathbb B$ be a regular function and $\varphi(0)=0$, then $$||f^{\odot}\varphi||_2\leq||f||_2,\qquad ||f_{\odot}\varphi||_2\leq||f||_2$$ for any $f\in H^2(\mathbb B)$.
\end{theorem}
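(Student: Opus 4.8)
The plan is to prove the two inequalities in two stages: first observe that the bound for $f_{\odot}\varphi$ follows from the one for $f^{\odot}\varphi$ by a conjugation duality, and then concentrate all the genuine work on establishing $\|f^{\odot}\varphi\|_2\le\|f\|_2$. For the reduction I would use Proposition~\ref{eq:RC-prop1}, which gives $(f_{\odot}\varphi)^c=f^{c\,\odot}\varphi^c$. Since regular conjugation merely conjugates the Taylor coefficients, that is $\big(\sum_n q^na_n\big)^c=\sum_n q^n\overline{a_n}$ (this is read off from $f^c_I(z)=\overline{F(\bar z)}-G(z)J$ in Definition~\ref{de: R-conjugate}), Proposition~\ref{prop:H^2-norm} shows that conjugation is an isometry of $H^2(\mathbb B)$, namely $\|h^c\|_2=\|h\|_2$. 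Moreover $\varphi(0)=0$ forces $\varphi^c(0)=\overline{\varphi(0)}=0$, and Proposition~\ref{prop:conjugate1}(i) guarantees $\varphi^c(\mathbb B)\subseteq\mathbb B$. Hence, granting the first inequality applied to the pair $(f^c,\varphi^c)$, I obtain $\|f_{\odot}\varphi\|_2=\|(f_{\odot}\varphi)^c\|_2=\|f^{c\,\odot}\varphi^c\|_2\le\|f^c\|_2=\|f\|_2$, which disposes of the second inequality.

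For the main inequality, set $g=f^{\odot}\varphi=\sum_n\varphi^{\ast n}a_n$ and write $g(q)=\sum_m q^mc_m$. By Proposition~\ref{prop:H^2-norm} the circular mean $\tfrac1{2\pi}\int_{-\pi}^{\pi}|g(re^{I\theta})|^2\,d\theta=\sum_m|c_m|^2r^{2m}$ is independent of $I$ and increases to $\|g\|_2^2$ as $r\to1^-$; it therefore suffices to bound this mean, computed on a single conveniently chosen slice $\mathbb C_I$, by $\|f\|_2^2$ for every $r<1$. Fixing $I$ and $J\in\mathbb S$ with $J\perp I$, I would invoke the Splitting Lemma~\ref{eq:Splitting} to write $g_I=A+BJ$ with $A,B:\mathbb B_I\to\mathbb C_I$ holomorphic, so that $|g_I|^2=|A|^2+|B|^2$ is subharmonic on $\mathbb B_I$. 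The essential analytic input is the Schwarz Lemma~\ref{th:SL-theorem}, which gives $|\varphi(q)|\le|q|$ and, via Proposition~\ref{prop:RP} and the induction already carried out in Remark~\ref{eq:R1}, the subordination bound $|\varphi^{\ast n}(q)|\le|q|^n$ on each sphere $[q]$. On this foundation I would run the classical Littlewood subharmonic-majorant argument: for $0<s<r<1$ majorize the subharmonic function $|g_I|^2$ on $\{|z|\le r\}$ by the Poisson (harmonic) extension of its boundary values on $|z|=r$, evaluate at the centre using $\varphi(0)=0$ together with the mean value property, and then let $s\to r$ followed by $r\to1^-$.

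The main obstacle is precisely that $f^{\odot}\varphi$ is \emph{not} the pointwise composition $f\circ\varphi$ (the Examples in Section~4 already show $g$ differs from $f\circ\varphi$ off the real axis), so Littlewood's argument cannot be applied verbatim to $u\circ\varphi$ with the subharmonic $u=|f|^2$ living on the target ball. The device that circumvents this is the passage to the splitting components $A,B$, whose dependence on $f$ and $\varphi_I$ is governed by iterating the explicit splitting formula for the $\ast$-product recalled just before Definition~\ref{de: Extension}, combined with the slice-independence of the integral means supplied by Proposition~\ref{prop:H^2-norm}. Together these allow me to transfer the whole estimate onto one fixed slice and to reduce the genuinely noncommutative $\ast$-composition to a vector-valued subordination on the disc $\mathbb B_I$, where the scalar Littlewood principle and the modulus bound $|\varphi^{\ast n}(q)|\le|q|^n$ take over.

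A harmless preliminary reduction keeps the series manipulations finite: I would first prove the inequality for polynomial $f$, for which $g=\sum_n\varphi^{\ast n}a_n$ is a finite sum and the splitting computation of $A,B$ is entirely algebraic, and then pass to general $f\in H^2(\mathbb B)$ by density of polynomials in $H^2(\mathbb B)$ and the monotone convergence of the truncated sums $\sum_m|c_m|^2r^{2m}$. This density step also makes transparent that the resulting operator $C_\varphi=f\mapsto f^{\odot}\varphi$ is the continuous extension of a contraction defined on polynomials, so that $\|C_\varphi\|\le 1$; the reverse inequality, needed for Theorem~\ref{first main result}, is immediate from $C_\varphi(1)=1$ and $\|1\|_2=1$.
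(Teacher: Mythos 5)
Your reduction of the second inequality to the first via $(f_{\odot}\varphi)^c=f^{c\,\odot}\varphi^c$, the isometry $\|h^c\|_2=\|h\|_2$, and $\varphi^c(0)=\overline{\varphi(0)}=0$ is correct, and is in fact the same ``conjugate method'' the paper itself uses; the polynomial-plus-density outer structure also matches the paper. But the core step --- proving $\|f^{\odot}\varphi\|_2\le\|f\|_2$ for polynomial $f$ --- has a genuine gap. As written, your subharmonic-majorant step is circular: majorizing $|g_I|^2$ (where $g=f^{\odot}\varphi$) by the Poisson extension of \emph{its own} boundary values on $|z|=r$ and evaluating at the centre yields only the sub-mean-value inequality $|g(0)|^2\le\frac{1}{2\pi}\int_{-\pi}^{\pi}|g_I(re^{I\theta})|^2\,d\theta$, which compares $g$ with itself and says nothing about $\|f\|_2$. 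Littlewood's actual argument builds the harmonic majorant of $|f|^2$ \emph{on the target disc} and uses that its pullback $h\circ\varphi$ is harmonic with $h(\varphi(0))=h(0)=$ the mean of $|f|^2$; this requires $g=f\circ\varphi$, which is precisely what fails for $\odot$, as you yourself note. Your proposed fix --- passing to the splitting components $A,B$ and invoking a ``vector-valued subordination'' --- is an assertion, not an argument: writing $\varphi_I=\phi_1+\phi_2J$, the components $(P_n,Q_n)$ of $(\varphi^{\ast n})_I$ obey the twisted recursion $P_{n+1}=P_n\phi_1-Q_n\overline{\phi_2(\bar z)}$, $Q_{n+1}=P_n\phi_2+Q_n\overline{\phi_1(\bar z)}$, which mixes values at $z$ and $\bar z$ and is not of the form $u\circ\varphi_I$ for any fixed function $u$ on the target; so there is no subharmonic function whose pullback equals or dominates $|A|^2+|B|^2$, and the scalar Littlewood principle has nothing to ``take over.'' Finally, the bound $|\varphi^{\ast n}(q)|\le|q|^n$, which you designate as the essential analytic input, only yields the pointwise estimate $|g(q)|\le\sum_n|q|^n|a_n|\le(1-|q|^2)^{-1/2}\|f\|_2$ (essentially Lemma \ref{eq:H^2 modulus}), which is far too weak to give a norm inequality.

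What actually closes this gap in the paper is a Hilbert-space orthogonality argument that uses no subharmonicity at all: writing $f(q)=f(0)+q(Tf)(q)$ with $T$ the backward shift, one gets $f^{\odot}\varphi=f(0)+\varphi\ast\bigl((Tf)^{\odot}\varphi\bigr)$; since $\varphi(0)=0$, the second summand vanishes at $0$, so the two terms are orthogonal in $H^2(\mathbb B)$ and $\|f^{\odot}\varphi\|_2^2=|f(0)|^2+\|\varphi\ast((Tf)^{\odot}\varphi)\|_2^2$. The multiplier estimate $\|\varphi\ast h\|_2\le\|\varphi\|_\infty\|h\|_2\le\|h\|_2$ (Lemma \ref{eq:H^2 proposition}, proved pointwise from Proposition \ref{prop:RP} together with the conjugation trick for left multipliers) then allows the inequality to be iterated down the Taylor coefficients, and for polynomial $f$ the iteration terminates, giving $\|f^{\odot}\varphi\|_2^2\le\sum_k|\hat f(k)|^2=\|f\|_2^2$. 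If you want to salvage your plan, replace the subharmonic transplantation by this shift-and-multiplier induction; the rest of your outline (conjugation duality, density of polynomials, and $\|C_\varphi\|=1$ via $C_\varphi(1)=1$) can stand as is.
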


 To prove this result, we need  some basic lemmas.
\begin{lemma}\label{eq:H^2 modulus}
Let $f\in H^2(\mathbb B)$, then for all $q\in \mathbb B$, $$|f(q)|\leq\bigg(\dfrac1{1-|q|^2}\bigg)^{\frac{1}{2}}||f||_2.$$
\end{lemma}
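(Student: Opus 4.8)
The plan is to reduce the estimate to the classical Cauchy--Schwarz inequality applied to the Taylor coefficients, exactly as in the holomorphic case on the disc. The two essential ingredients are the multiplicativity of the quaternionic modulus, $|pq|=|p||q|$, and the coefficient description of the Hardy norm furnished by Proposition \ref{prop:H^2-norm}, namely $\|f\|_2=\big(\sum_{n=0}^{\infty}|a_n|^2\big)^{1/2}$ when $f(q)=\sum_{n=0}^{\infty}q^n a_n$.

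First I would fix $q\in\mathbb B$ and start from the Taylor expansion $f(q)=\sum_{n=0}^{\infty}q^n a_n$ guaranteed by Theorem \ref{eq:Taylor}. Applying the triangle inequality together with multiplicativity of the modulus gives
$$|f(q)|\le\sum_{n=0}^{\infty}\big|q^n a_n\big|=\sum_{n=0}^{\infty}|q|^n|a_n|,$$
where I have used $|q^n a_n|=|q|^n|a_n|$. Next I would apply the ordinary Cauchy--Schwarz inequality to the two nonnegative real sequences $\big(|q|^n\big)_{n\ge0}$ and $\big(|a_n|\big)_{n\ge0}$, obtaining
$$\sum_{n=0}^{\infty}|q|^n|a_n|\le\bigg(\sum_{n=0}^{\infty}|q|^{2n}\bigg)^{\frac12}\bigg(\sum_{n=0}^{\infty}|a_n|^2\bigg)^{\frac12}.$$
Since $|q|<1$, the first factor is the geometric series $\sum_{n=0}^{\infty}|q|^{2n}=\big(1-|q|^2\big)^{-1}$, and by Proposition \ref{prop:H^2-norm} the second factor equals $\|f\|_2$. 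Combining these identifications yields the claimed bound $|f(q)|\le\big(1-|q|^2\big)^{-1/2}\|f\|_2$.

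There is no serious obstacle here; the argument is entirely elementary once the coefficient form of the Hardy norm is in hand. The only point meriting a word of care is convergence: the finiteness of $\|f\|_2$ forces $\sum|a_n|^2<\infty$, and together with $|q|<1$ this makes the Cauchy--Schwarz right-hand side finite, which a posteriori justifies the rearrangement and the absolute convergence of $\sum|q|^n|a_n|$ used in the first step. Thus the whole proof is three short lines: expand, estimate termwise by multiplicativity of the modulus, and apply Cauchy--Schwarz, invoking Proposition \ref{prop:H^2-norm} to recognize the coefficient sum as $\|f\|_2$.
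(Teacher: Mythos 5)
Your proof is correct and follows exactly the paper's own argument: expand $f$ in its Taylor series, bound $|f(q)|$ by $\sum_{n}|q|^n|a_n|$ via the triangle inequality and multiplicativity of the quaternionic modulus, then apply Cauchy--Schwarz and identify the coefficient sum with $\|f\|_2$ through Proposition \ref{prop:H^2-norm}. Your added remark on absolute convergence is a minor point of care the paper leaves implicit; otherwise the two proofs coincide.
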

\begin{proof}
Consider the power series expansion of $f$
$$f(q)=\sum\limits_{n=0}^{\infty}q^n \hat{f}(n).$$
By the Cauchy-Schwarz inequality, we have
$$|f(q)|\leq\sum\limits_{n=0}^{\infty}|q|^n |\hat{f}(n)|\leq \bigg(\sum\limits_{n=0}^{\infty}|\hat{f}(n)|^{2}\bigg)^{\frac{1}{2}}\bigg(\sum\limits_{n=0}^{\infty}|q|^{2n}\bigg)^{\frac{1}{2}} =\bigg(\dfrac1{1-|q|^2}\bigg)^{\frac{1}{2}}||f||_2$$
for all $q\in \mathbb B$.

\end{proof}
\begin{lemma}\label{eq:H^2 convergence}
Let $\{f_n\}_{n\in \mathbb N}$ be a convergent sequence in $H^2(\mathbb B)$. Then $\{f_n\}_{n\in \mathbb N}$ converges uniformly on every compact subset of $\mathbb B $.
\begin{proof}
Suppose that $f_n \longrightarrow f\in H^2(\mathbb B) $, i.e. $||f_n-f||_2\longrightarrow 0$ \, \,as \, \,$n\longrightarrow \infty$, then for any compact set $C\subset \mathbb B$, there exists $r\in(0,1)$ such that $C\subseteq \overline{B(0,r)} \subset \mathbb B$, by Theorem \ref{eq:H^2 modulus} and the Maximum Modulus Principle \ref{th:MMP-theorem} we have
$$\sup \limits_{q \in C} \big|f_n(q)-f(q)\big|\leq\sup \limits_{|q|\leq r} \big|f_n(q)-f(q)\big|\leq\bigg(\dfrac1{1-r^2}\bigg)^{\frac{1}{2}}||f||_2\longrightarrow 0\quad as \,\,n\longrightarrow \infty,$$
which implies that $\{f_n\}_{n\in \mathbb N}$ converges uniformly to $f$ on $C$.
\end{proof}
\end{lemma}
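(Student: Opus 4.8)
The plan is to deduce uniform convergence on compacta directly from the pointwise growth estimate in Lemma~\ref{eq:H^2 modulus}, exploiting the fact that the constant appearing there depends only on $|q|$ and degenerates only as $|q|\to 1$.

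First I would fix the setup. By hypothesis $\{f_n\}$ converges in $H^2(\mathbb B)$, say $f_n\to f$, which by definition means $f\in H^2(\mathbb B)$ and $\|f_n-f\|_2\to 0$ as $n\to\infty$. Since $H^2(\mathbb B)$ is a real vector space (indeed an $\mathbb H$-module under $+$ and $\ast$), the differences $g_n:=f_n-f$ again lie in $H^2(\mathbb B)$, so Lemma~\ref{eq:H^2 modulus} is applicable to each $g_n$.

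Next I would localize. Fix a compact set $C\subset\mathbb B$. Because $C$ is compact and $\mathbb B$ is open, the continuous function $q\mapsto|q|$ attains a maximum $r<1$ on $C$; equivalently $C\subseteq\overline{B(0,r)}$ for some $r\in(0,1)$. The decisive observation is that on $\overline{B(0,r)}$ the controlling factor in Lemma~\ref{eq:H^2 modulus} is uniformly bounded: since $1-|q|^2\ge 1-r^2>0$, we have $\big(1-|q|^2\big)^{-1/2}\le\big(1-r^2\big)^{-1/2}$ for every $q\in C$.

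Finally I would combine the two ingredients. Applying Lemma~\ref{eq:H^2 modulus} to $g_n=f_n-f$ and using the uniform bound on the constant gives
$$\sup_{q\in C}\big|f_n(q)-f(q)\big|\le\Big(\tfrac{1}{1-r^2}\Big)^{\!\frac12}\,\|f_n-f\|_2\longrightarrow 0\qquad(n\to\infty),$$
which is precisely the asserted uniform convergence of $\{f_n\}$ to $f$ on $C$. I do not expect a genuine obstacle: the whole content is that Lemma~\ref{eq:H^2 modulus} converts the $H^2$-norm into a pointwise sup-bound whose constant is uniform away from the boundary. The only points needing a little care are (i) that the limit $f$ and the differences $f_n-f$ remain in $H^2(\mathbb B)$, and (ii) that $r=\max_{q\in C}|q|$ is \emph{strictly} less than $1$, so the constant $(1-r^2)^{-1/2}$ does not blow up. One could optionally invoke the Maximum Modulus Principle~\ref{th:MMP-theorem} to replace the supremum over $\overline{B(0,r)}$ by the supremum over the boundary sphere $|q|=r$, but this refinement is not needed for the estimate.
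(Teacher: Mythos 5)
Your proof is correct and follows essentially the same route as the paper: apply the pointwise estimate of Lemma~\ref{eq:H^2 modulus} to the differences $f_n-f$ and use that the factor $(1-|q|^2)^{-1/2}$ is uniformly bounded by $(1-r^2)^{-1/2}$ on a closed ball $\overline{B(0,r)}$ containing the compact set. In fact your displayed bound $\bigl(\tfrac{1}{1-r^2}\bigr)^{1/2}\|f_n-f\|_2$ is the correct one, whereas the paper's proof contains a typo ($\|f\|_2$ in place of $\|f_n-f\|_2$), and you rightly observe that the appeal to the Maximum Modulus Principle~\ref{th:MMP-theorem} is dispensable.
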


\begin{lemma}\label{eq:H^2 proposition}Let $f\in H^2(\mathbb B) $ and $\varphi \in H^{\infty}(\mathbb B)$, then $f\ast\varphi$, $\varphi \ast f \in H^2(\mathbb B)$. Moreover,
\begin{eqnarray*}
||f\ast\varphi||_2&\leq&||\varphi||_{\infty}||f||_2,
\\
||\varphi \ast f||_2&\leq&||\varphi||_{\infty}||f||_2.
\end{eqnarray*}
\begin{proof}
By Proposition \ref{prop:RP}, we have for every $q\in\mathbb B$,
$$\big|f\ast\varphi(q)\big|\leq\big|f(q)\big|||\varphi||_{\infty},$$
and hence for any $0<p<\infty$,
\begin{eqnarray}\label{eq:*con}
\begin{split}
||f\ast\varphi||_p&=\sup\limits_{I\in\mathbb S}\lim_{r\rightarrow 1^{-}}\bigg(\frac{1}{2\pi}\int_{-\pi}^{\pi}\big|f\ast\varphi(re^{I\theta})
\big|^{p}d\theta\bigg)^{\frac1p}
\\
&\leq ||\varphi||_{\infty}\sup\limits_{I\in\mathbb S}\lim_{r\rightarrow 1^{-}}\bigg(\frac{1}{2\pi}\int_{-\pi}^{\pi}\big|f(re^{I\theta})\big|^{p}d\theta\bigg)^{\frac1p}\\
&=||\varphi||_{\infty}||f||_p.
\end{split}
\end{eqnarray}
In particular, $$||f\ast\varphi||_2\leq||\varphi||_{\infty}||f||_2.$$
As for $f\ast\varphi$, according to Proposition \ref{prop:H^2-norm} and Proposition \ref{prop:conjugate1} that
$$||f^c||_2=||f||_2 \qquad\ and \qquad\ ||\varphi^c||_{\infty}=||\varphi||_{\infty},$$
Together with (\ref{eq:*con}) and the fact that $(\varphi\ast f)^c=f^c\ast\varphi^c$, these yield
$$||\varphi \ast f||_2=||(\varphi \ast f)^c||_2=||f^c \ast\varphi^c||_2\leq||\varphi^c||_{\infty}||f^c||_2=||\varphi||_{\infty}||f||_2,$$
which completes the proof.
\end{proof}
\end{lemma}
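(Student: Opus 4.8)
The plan is to establish a \emph{pointwise} modulus bound for the $\ast$-product first, then integrate it over the circles $re^{I\theta}$ to obtain the $H^p$ (hence $H^2$) estimate for $f\ast\varphi$, and finally deduce the estimate for the reversed product $\varphi\ast f$ from the first one by exploiting the order-reversing symmetry of the regular conjugation.

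First I would prove the pointwise inequality
$$\big|f\ast\varphi(q)\big|\leq\big|f(q)\big|\,\|\varphi\|_{\infty},\qquad q\in\mathbb B.$$
The mechanism is Proposition \ref{prop:RP}: when $f(q)\neq0$ we have $f\ast\varphi(q)=f(q)\,\varphi\big(f(q)^{-1}qf(q)\big)$, and when $f(q)=0$ both sides vanish. The crucial observation is that the evaluation point $f(q)^{-1}qf(q)$ is a quaternionic conjugate of $q$, so it has the same real part as $q$ and the same modulus of imaginary part; in particular $\big|f(q)^{-1}qf(q)\big|=|q|<1$, so this point again lies in $\mathbb B$ and therefore $\big|\varphi(f(q)^{-1}qf(q))\big|\leq\|\varphi\|_{\infty}$. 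Taking moduli in the product formula then yields the claimed bound.

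Second I would integrate. For each $I\in\mathbb S$ and $r\in[0,1)$ the pointwise bound gives $\big|f\ast\varphi(re^{I\theta})\big|^{p}\leq\|\varphi\|_{\infty}^{p}\,\big|f(re^{I\theta})\big|^{p}$; integrating in $\theta$ over $[-\pi,\pi]$, taking $p$-th roots, and passing to $\lim_{r\to1^-}$ followed by $\sup_{I\in\mathbb S}$ shows $\|f\ast\varphi\|_{p}\leq\|\varphi\|_{\infty}\|f\|_{p}$ for every $0<p<\infty$. Specializing to $p=2$ gives $\|f\ast\varphi\|_{2}\leq\|\varphi\|_{\infty}\|f\|_{2}<\infty$, so in particular $f\ast\varphi\in H^2(\mathbb B)$. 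For the reversed product I would use the identity $(\varphi\ast f)^c=f^c\ast\varphi^c$ together with the fact that conjugation preserves both relevant norms: since $f^c$ has Taylor coefficients $\bar a_n$, Proposition \ref{prop:H^2-norm} gives $\|f^c\|_2=\|f\|_2$, while Proposition \ref{prop:conjugate1}(iv), applied with $\Omega_1=\Omega_2=\mathbb B$, gives $\|\varphi^c\|_{\infty}=\|\varphi\|_{\infty}$. Applying the already-proved first inequality to the pair $(f^c,\varphi^c)$ shows $f^c\ast\varphi^c\in H^2(\mathbb B)$ with $\|f^c\ast\varphi^c\|_2\leq\|\varphi^c\|_{\infty}\|f^c\|_2=\|\varphi\|_{\infty}\|f\|_2$. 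Writing $\varphi\ast f=(f^c\ast\varphi^c)^c$ and using $\|g^c\|_2=\|g\|_2$ one more time then delivers $\varphi\ast f\in H^2(\mathbb B)$ and $\|\varphi\ast f\|_2\leq\|\varphi\|_{\infty}\|f\|_2$.

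The main obstacle is concentrated entirely in the pointwise step: the whole argument hinges on recognizing that the argument $f(q)^{-1}qf(q)$ produced by Proposition \ref{prop:RP} is a conjugate of $q$ and hence never leaves $\mathbb B$, which is precisely what licenses replacing $\big|\varphi(f(q)^{-1}qf(q))\big|$ by $\|\varphi\|_{\infty}$. Once this geometric fact is secured, the $L^p$ integration and the conjugation bookkeeping for the reversed product are routine and require no further input beyond the norm identities already recorded in Proposition \ref{prop:H^2-norm} and Proposition \ref{prop:conjugate1}.
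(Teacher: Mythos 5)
Your proposal is correct and follows essentially the same route as the paper's proof: the pointwise bound $|f\ast\varphi(q)|\leq|f(q)|\,\|\varphi\|_{\infty}$ via Proposition \ref{prop:RP}, integration over the slices to get the $H^p$ estimate, and the conjugation identity $(\varphi\ast f)^c=f^c\ast\varphi^c$ combined with $\|f^c\|_2=\|f\|_2$ and $\|\varphi^c\|_{\infty}=\|\varphi\|_{\infty}$ for the reversed product. Your explicit justification that $f(q)^{-1}qf(q)$ is a conjugate of $q$ and hence stays in $\mathbb B$ is a detail the paper leaves implicit, but it is the same argument.
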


Incidentally, the approach to the proof of the preceding lemma also immediately give a very simple proof of Theorem 5.17 in \cite{ACS1}, which is of independent interest, and is initially proved for the left multiplier in the special case  that $p=2$ by using an approximation argument and Runge's theorem.
\begin{proposition}
Let $\varphi_a$ is a regular M\"obius transformation of $\mathbb B$. then the multiplier operators $$ \mathcal{M}_{\varphi_a}^l:f\mapsto\varphi_a\ast f$$
and $$\mathcal{M}_{\varphi_a}^r:f\mapsto f\ast\varphi_a$$ are isometries on $H^2(\mathbb B)$ and $H^{\infty}(\mathbb B)$, respectively. Furthermore, $\mathcal{M}_{\varphi_a}^r$ is an isometry on $H^p(\mathbb B)$ for $p\neq2$.
\end{proposition}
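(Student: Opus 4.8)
The plan is to treat the right multiplier $\mathcal{M}^r_{\varphi_a}$ first, for every $0<p\le\infty$ at once, and then to deduce the $H^2$ statement for the left multiplier $\mathcal{M}^l_{\varphi_a}$ from it by conjugation, exactly as in the proof of Lemma~\ref{eq:H^2 proposition}. The single fact about $\varphi_a$ that I will use is that a regular M\"obius transformation is an automorphism of $\mathbb B$ extending continuously to $\overline{\mathbb B}$ and carrying $\partial\mathbb B$ onto $\partial\mathbb B$; equivalently $|\varphi_a(q)|=1$ for $|q|=1$, so that for every $\epsilon>0$ there is a $\delta>0$ with $|\varphi_a(q)|\ge 1-\epsilon$ whenever $|q|\ge 1-\delta$.

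For $\mathcal{M}^r_{\varphi_a}$ I would start from Proposition~\ref{prop:RP}, which gives, at each $q$ with $f(q)\neq0$,
\[
\bigl|f\ast\varphi_a(q)\bigr|=\bigl|f(q)\bigr|\,\bigl|\varphi_a\bigl(f(q)^{-1}qf(q)\bigr)\bigr|,
\]
while $f\ast\varphi_a(q)=0=|f(q)|$ at the (isolated, hence measure--zero on each circle) zeros of $f$. The point is that $f(q)^{-1}qf(q)$ is a conjugate of $q$, hence lies on the sphere $[q]$ and has modulus exactly $|q|$. Consequently, for $|q|=r>1-\delta$ the last factor lies in $[1-\epsilon,1]$, and on the circle $q=re^{I\theta}$ one obtains the sandwich
\[
(1-\epsilon)^p\,\bigl|f(re^{I\theta})\bigr|^p\le\bigl|f\ast\varphi_a(re^{I\theta})\bigr|^p\le\bigl|f(re^{I\theta})\bigr|^p .
\]
Integrating in $\theta$, letting $r\to1^-$, taking $\sup_{I}$ and then $\epsilon\to0$ gives $\|f\ast\varphi_a\|_p=\|f\|_p$ for $0<p<\infty$. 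For $p=\infty$ the same sandwich applied to the suprema over $\{|q|=r\}$ yields $\sup_{|q|=r}|f\ast\varphi_a|\ge(1-\epsilon)\sup_{|q|=r}|f|$; since by the Maximum Modulus Principle (Theorem~\ref{th:MMP-theorem}) the quantity $\sup_{|q|=r}|f|$ increases to $\|f\|_\infty$, one concludes $\|f\ast\varphi_a\|_\infty=\|f\|_\infty$. Thus $\mathcal{M}^r_{\varphi_a}$ is an isometry of $H^p(\mathbb B)$ for all $0<p\le\infty$, which covers both the $H^\infty$ claim and the $p\neq2$ claim.

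For the left multiplier I would use that conjugation reverses the $\ast$-product, $(\varphi_a\ast f)^c=f^c\ast\varphi_a^c$, together with $\|g^c\|_2=\|g\|_2$ (Propositions~\ref{prop:H^2-norm} and~\ref{prop:conjugate1}). By Proposition~\ref{prop:conjugate1} the conjugate $\varphi_a^c$ is again a bijective regular self-map of $\mathbb B$ whose modulus agrees spherically with that of $\varphi_a$, so $\varphi_a^c$ also has boundary modulus $1$ and the right-multiplier result applies to it. Hence
\[
\|\varphi_a\ast f\|_2=\|(\varphi_a\ast f)^c\|_2=\|f^c\ast\varphi_a^c\|_2=\|f^c\|_2=\|f\|_2 ,
\]
so $\mathcal{M}^l_{\varphi_a}$ is an isometry of $H^2(\mathbb B)$.

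The main obstacle---and the reason the statement is asymmetric---is that this conjugation detour is forced for the left multiplier: in Proposition~\ref{prop:RP} the factor $\varphi_a\ast f$ evaluates $f$ at a \emph{rotated} point of $[q]$ rather than at $q$ itself, so no pointwise modulus identity survives on a fixed slice, and conjugation preserves only the $2$-norm. Indeed, by Proposition~\ref{prop:conjugate} conjugation preserves spherical suprema and infima of the modulus but not the $L^p(\partial\mathbb B_I)$ integrals for $p\neq2$, which is exactly why $\mathcal{M}^l_{\varphi_a}$ can be asserted isometric only on $H^2$. A secondary point requiring care is the $p=\infty$ lower bound, where the norm is an interior supremum rather than a boundary integral, so one must first convert it to a near-boundary supremum via the Maximum Modulus Principle before applying the uniform estimate $|\varphi_a|\to1$.
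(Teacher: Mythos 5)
Your argument is correct, but it runs in the opposite direction to the paper's and is organized around a different key mechanism. The paper proves the \emph{left} multiplier isometry on $H^2(\mathbb B)$ first and directly, using Stoppato's factorization $\varphi_a=M_a u$ with $u\in\partial\mathbb B$: since $M_a$ has all Taylor coefficients in the slice $\mathbb C_I$ containing $a$, on that slice the $\ast$-product becomes a pointwise product, $(\varphi_a\ast f)_I=(M_a)_I\,(u\ast f)_I$, and the claim reduces (via Proposition~\ref{prop:H^2-norm}) to the classical fact that multiplication by an inner function is isometric on the slice Hardy space; the \emph{right} multiplier on $H^2$ is then obtained by conjugation, and the $p\neq 2$ case is handled separately by an $\varepsilon$-choice of slice and radius together with $\min_{|q|=r}|\varphi_a(q)|\to 1$. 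You instead prove the right multiplier isometry for \emph{all} $0<p\le\infty$ at once by the pointwise sandwich coming from Proposition~\ref{prop:RP} and the uniform boundary modulus of $\varphi_a$ (the same two ingredients the paper uses only for its $p\neq2$ argument), and then deduce the left multiplier on $H^2$ by conjugation --- the reverse of the paper's deduction. What your route buys is uniformity: one argument covers $p=2$, $p=\infty$ and $p\neq 2$, the right-multiplier isometry on $H^2$ needs no conjugation at all, and you never need the explicit form $\varphi_a=M_au$ (only the boundary behaviour, which comes from the same reference); your use of Proposition~\ref{prop:conjugate1} to transfer the boundary-modulus hypothesis to $\varphi_a^c$ is also cleaner than the paper's bare assertion that $\varphi_a^c$ is again a regular M\"obius transformation. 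What the paper's route buys is the structural identification of left multiplication with classical inner-function multiplication on a distinguished slice, and an $H^\infty$ statement for \emph{both} multipliers. On that last point, your closing remark that ``conjugation preserves only the $2$-norm'' is slightly overstated: by part (iv) of Proposition~\ref{prop:conjugate1} the sup-norm is also conjugation-invariant, so your own conjugation trick, $\|\varphi_a\ast f\|_\infty=\|f^c\ast\varphi_a^c\|_\infty=\|f^c\|_\infty=\|f\|_\infty$, shows $\mathcal{M}^l_{\varphi_a}$ is isometric on $H^\infty(\mathbb B)$ as well; what genuinely fails for $p\notin\{2,\infty\}$ is only the conjugation-invariance of the $p$-norm.
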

\begin{proof}
From \cite{Stoppato1}, we know every regular M\"obius transformation $\varphi_a$ of $\mathbb B$ is of the form
$$\varphi_a(q)=(1-q\bar{a})^{-\ast}\ast(a-q)u=:M_au,$$
where $u\in\mathbb \partial \mathbb B$. For any $f\in H^2(\mathbb B)$, it follows from Proposition \ref{prop:H^2-norm} that
$$||\mathcal{M}_{\varphi_a}^lf||_2=||(\mathcal{M}_{\varphi_a}^lf)_I||_2=||(M_{\varphi_a})_I(u\ast f)_I||_2=||(u\ast f)_I||_2=||u\ast f||_2=||f||_2$$
and $$||\mathcal{M}_{\varphi_a}^rf||_2=||(\mathcal{M}_{\varphi_a}^rf)^c||_2=||(\varphi_a)^c\ast f^c||_2=||f^c||_2=||f||_2.$$
The penultimate equation follows from the fact that $(\varphi_a)^c$ is also a regular M\"obius transformation of $\mathbb B$ and what we just proved on the isometry
property of left multiplier operators.

The fact that both $\mathcal{M}_a^l$ and $\mathcal{M}_a^r$ are isometries from $H^{\infty}(\mathbb B)$ into itself follows immediately from Proposition \ref{prop:RP} and (iv) in Proposition \ref{prop:conjugate1}.

It remains to prove that $\mathcal{M}_{\varphi_a}^r$ is an isometry on $H^p(\mathbb B)$ for $p\neq2$. By the proof of Lemma \ref{eq:H^2 proposition}, it suffices to prove that $$||\mathcal{M}_{\varphi_a}^rf||_p\geq||f||_p$$
for any $f\in H^p(\mathbb B)$ with $||f||_p>0$. For any fixed $\varepsilon \in (0,||f||_p)$, from the very definition it follows that there are some $I_0\in\mathbb S$ and  some $r_0\in(0,1)$ such that
$$\bigg(\frac{1}{2\pi}\int_{-\pi}^{\pi}\big|f(r_0e^{I_0\theta})\big|^{p}d\theta\bigg)^{\frac1p}
\geq ||f||_p-\varepsilon,$$
which together with Proposition \ref{prop:RP} yields that
$$||\mathcal{M}_{\varphi_a}^rf||_p\geq
\bigg(\frac{1}{2\pi}\int_{-\pi}^{\pi}\big|f\ast\varphi_a(re^{I_0\theta})
\big|^{p}d\theta\bigg)^{\frac1p}
\geq \big(||f||_p-\varepsilon \big)\min\limits_{|q|=r}|\varphi_a(q)|$$
for any $r\in (r_0,1)$. Therefore, letting $r\longrightarrow 1$ yields
$$||\mathcal{M}_{\varphi_a}^rf||_p\geq ||f||_p-\varepsilon,$$
which completes the proof.
\end{proof}
We are now in a position to prove the main theorem of this section.

\begin{proof}[Proof of Theorem $\ref{th:LST-theorem}$]
We define a translation operator $T:H^2(\mathbb B)\rightarrow H^2(\mathbb B)$ via
$$(Tf)(q)=\sum\limits_{n=0}^{\infty}q^n \hat{f}(n+1)$$
for any $f\in H^2(\mathbb B)$ with the Taylor expansion
$$f(q)=\sum\limits_{n=0}^{\infty}q^n \hat{f}(n)\in H^2(\mathbb B).$$
Due to Proposition \ref{prop:H^2-norm} ,
$$||Tf||_2^2=\sum\limits_{n=0}^{\infty}\big|\hat{f}(n+1)\big|^2=\sum\limits_{n=1}^{\infty}\big|\hat{f}(n)\big|^2\leq||f||^2_2$$
and hence $Tf\in H^2(\mathbb B)$.
Moreover, by definition,
\begin{eqnarray}\label{eq: Tf}
f(q)=f(0)+q(Tf)(q),\quad \forall \, q\in \mathbb B,
\end{eqnarray}
and
\begin{eqnarray}\label{eq: T^nf}
(T^nf)(0)=\hat{f}(n),\quad \forall \,n\in \mathbb N.
\end{eqnarray}
for any $f\in H^2(\mathbb B)$.

We suppose first that $f$ is a regular polynomial. In this case, Proposition \ref{prop:RP} shows that $f^{\odot}\varphi$ is bounded on $\mathbb B$ and so is in $H^2(\mathbb B)$. Now we estimate its norm $||f^{\odot}\varphi||_2$. By (\ref{eq: Tf}) we have
$$f^{\odot}\varphi(q)=f(0)+\varphi\ast \big((Tf)^{\odot}\varphi\big)(q),\quad \forall \, q\in \mathbb B.$$
By assumption $\varphi(0)=0$, which implies that the second term on the right-hand side of the identity above takes value 0 at $q=0$ and so $q$ is a common factor in its power series expansion. Consequently, $$\big|\big|f^{\odot}\varphi\big|\big|_2^2=\big|f(0)\big|^2+\big|\big|\varphi\ast\big((Tf)^{\odot}\varphi\big)\big|\big|^2_2.$$
By Lemma \ref{eq:H^2 proposition}, we have $$\big|\big|\varphi\ast\big((Tf)^{\odot}\varphi\big)\big|\big|_2\leq\big|\big|\varphi\big|\big|_{\infty}\big|\big|(Tf)^{\odot}\varphi\big|\big|_2\leq\big|\big|(Tf)^{\odot}\varphi\big|\big|_2,$$
and hence
\begin{eqnarray}\label{eq: f norm}
\big|\big|f^{\odot}\varphi\big|\big|_2^2\leq\big|f(0)\big|^2+\big|\big|(Tf)^{\odot}\varphi\big|\big|^2_2.
\end{eqnarray}
Now successively substituting $Tf$, $T^2f$, $\cdot \cdot\cdot$, $T^nf$ for $f$ in (\ref{eq: f norm}) yields
$$\big|\big|(Tf)^{\odot}\varphi\big|\big|_2^2\leq\big|(Tf)(0)\big|^2+\big|\big|(T^2f)^{\odot}\varphi\big|\big|^2_2,$$
$$\big|\big|(T^2f)^{\odot}\varphi\big|\big|_2^2\leq\big|(T^2f)(0)\big|^2+\big|\big|(T^3f)^{\odot}\varphi\big|\big|^2_2,$$
$$\vdots$$
$$\big|\big|(T^nf)^{\odot}\varphi\big|\big|_2^2\leq\big|(T^nf)(0)\big|^2+\big|\big|(T^{n+1}f)^{\odot}\varphi\big|\big|^2_2.$$
Putting all these inequalities together, we get
\begin{eqnarray}\label{eq: f 2-norm}
\big|\big|f^{\odot}\varphi\big|\big|_2^2\leq\sum\limits_{k=0}^n\big|(T^kf)(0)\big|^2+\big|\big|(T^{n+1}f)^{\odot}\varphi\big|\big|^2_2.
\end{eqnarray}
for any $n\in \mathbb N.$

Now recall that $f$ is a polynomial. If we choose $n$ be the degree of $f$, then $T^{n+1}f$ vanishes identically and so does $(T^{n+1}f)^{\odot}\varphi$. It follows from (\ref{eq: T^nf}) and (\ref{eq: f 2-norm}) that
\begin{eqnarray}\label{eq: f H^2-norm}
\big|\big|f^{\odot}\varphi\big|\big|_2^2\leq\sum\limits_{k=0}^n\big|(T^kf)(0)\big|^2=\sum\limits_{k=0}^n\big|\hat{f}(k)\big|^2=||f||^2_2,
\end{eqnarray}
which shows that $||f^{\odot}\varphi||_2\leq||f||_2$ holds at least for any regular polynomials.

Now we consider the general case that $f\in H^2(\mathbb B)$. Let $f_n$ be the $n$-th partial sum of the power series expansion of $f$, i.e. $$f_n(q)=\sum\limits_{k=0}^n q^k\hat{f}(k).$$
Then $f_n$ is a polynomial and $$||f_n-f||^2_2=\sum\limits_{k=n+1}^{\infty}|\hat{f}(k)|^2\longrightarrow 0\qquad as\quad n\longrightarrow 0.$$ By (\ref{eq: f H^2-norm}), for all $n\in\mathbb N$,
\begin{eqnarray}\label{eq: f_n H^2-norm}
||f_n^{\odot}\varphi||_2\leq||f_n||_2\leq||f||_2.
\end{eqnarray}
Notice that $|\varphi(q)|\leq|q|$ in virtue of Theorem \ref{th:SL-theorem}. Using the arguments similar to those used in Remark \ref{eq:R1} and in Lemma \ref{eq:H^2 modulus} we obtain that for any fixed $r\in (0,1)$,
$$\sup\limits_{|q|\leq r}\big|f_n^\odot\varphi(q)-f^\odot\varphi(q)\big|
\leq\bigg(\dfrac1{1-r^2}\bigg)^{\frac{1}{2}}||f||_2
\longrightarrow 0\quad as \,\,n\longrightarrow \infty,$$
which implies that $\big\{f_n^\odot\varphi\big\}_{n\in \mathbb N}$ converges uniformly to $f^\odot\varphi$ on every compact subset $U\subset \mathbb B$.
Consequently, for any $r\in (0,1)$ and $I\in \mathbb S$, we have
\begin{eqnarray*}
\bigg(\frac{1}{2\pi}\int_{-\pi}^{\pi}\big|f^\odot \varphi(re^{I\theta})\big|^{2}d\theta\bigg)^{\frac12}&=&\lim\limits_{n\longrightarrow \infty} \bigg(\frac{1}{2\pi}\int_{-\pi}^{\pi}\big|f_n^\odot \varphi(re^{I\theta})\big|^{2}d\theta\bigg)^{\frac12}\\
&\leq&\limsup\limits_{n\longrightarrow \infty}||f_n^{\odot}\varphi||_2\\
&\leq&\limsup\limits_{n\longrightarrow \infty}||f_n||_2\\
&\leq&||f||_2,
\end{eqnarray*}
which implies that
$f^{\odot}\varphi\in H^2(\mathbb B)$ and $||f^{\odot}\varphi||_2\leq||f||_2.$ In the second inequality we have used Proposition \ref{prop:H^2-norm}. The penultimate and last equations follow by (\ref{eq: f_n H^2-norm}).

As for $f_{\odot}\varphi$, we can prove the result in the very same manner, it suffices to notice that
$$f(q)=f(0)+q(Tf)(q)=f(0)+(Tf)(q)\ast q,\quad \forall \, q\in \mathbb B,$$
and hence
$$f_{\odot}\varphi(q)=f(0)+ \big((Tf)_{\odot}\varphi\big)\ast\varphi(q),\quad \forall \, q\in \mathbb B.$$
An alternative easier method is the conjugate method as in Lemma \ref{eq:H^2 proposition}, which is the following
$$||f_{\odot}\varphi||_2=||(f_{\odot}\varphi)^c||_2=||f^{c\,{\odot}}\varphi^c||_2\leq||f^c||_2=||f||_2.$$
In the second equation we have used equation (\ref{eq:RC-con2}).
Now the proof is completed.
\end{proof}

Let $\varphi:\mathbb B\rightarrow \mathbb B$ be a regular function and $\varphi(0)=0$, then $\varphi$ induces two \emph{composition operators} $C_{\varphi}$ and $D_{\varphi}$ on $H^2(\mathbb B)$ defined by
$$C_{\varphi}(f)=f^{\odot}\varphi, \qquad \qquad D_{\varphi}(f)=f_{\odot}\varphi.$$
Obviously, $C_{\varphi}$ is a right $\mathbb H$-linear operator while $D_{\varphi}$ a left $\mathbb H$-linear operator on \emph{Hilbert} $\mathbb H$-module $H^2(\mathbb B)$. Namely,
$$C_{\varphi}(f\ast\lambda+g\ast\mu)=C_{\varphi}(f)\ast\lambda+C_{\varphi}(g)\ast\mu $$
and
$$D_{\varphi}(\lambda\ast f+\mu\ast g)=\lambda\ast D_{\varphi}(f)+\mu\ast D_{\varphi}(g)$$
for all $\lambda$, $\mu\in \mathbb H$ and $f$, $g\in H^2(\mathbb B)$. Moreover, Theorem \ref{th:LST-theorem} shows that $C_{\varphi}$ and $D_{\varphi}$ are two bounded composition operators.
\begin{corollary}
Let $\varphi:\mathbb B\rightarrow \mathbb B$ be a regular function and $\varphi(0)=0$, then $C_{\varphi}$ and $D_{\varphi}$ are two bounded composition operators on $H^2(\mathbb B)$ with norms $||C_{\varphi}||=||D_{\varphi}||=1$.
\end{corollary}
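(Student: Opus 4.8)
The plan is to read the upper bound off the Littlewood Subordination Principle (Theorem~\ref{th:LST-theorem}) and to exhibit a single extremal function for the matching lower bound. First I would record that, by the very definitions $C_{\varphi}(f)=f^{\odot}\varphi$ and $D_{\varphi}(f)=f_{\odot}\varphi$, Theorem~\ref{th:LST-theorem} says precisely that $||C_{\varphi}(f)||_2\leq||f||_2$ and $||D_{\varphi}(f)||_2\leq||f||_2$ for every $f\in H^2(\mathbb B)$. In particular $C_{\varphi}f$ and $D_{\varphi}f$ again lie in $H^2(\mathbb B)$, so both operators are well defined and bounded, and their operator norms satisfy $||C_{\varphi}||\leq 1$ and $||D_{\varphi}||\leq 1$.

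For the reverse inequality I would test both operators on the constant function $f\equiv 1$. Writing $1=\sum_{n=0}^{\infty}q^n a_n$ with $a_0=1$ and $a_n=0$ for $n\geq 1$, and using that $\varphi^{\ast 0}\equiv 1$ is the unit for the $\ast$-product, the defining series collapse to the single surviving term: $C_{\varphi}(1)=\varphi^{\ast 0}a_0=1$ and $D_{\varphi}(1)=a_0\ast\varphi^{\ast 0}=1$. By Proposition~\ref{prop:H^2-norm}, $||1||_2=\big(\sum_{n=0}^{\infty}|a_n|^2\big)^{1/2}=1$, and hence $||C_{\varphi}(1)||_2=||D_{\varphi}(1)||_2=1=||1||_2$. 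This forces $||C_{\varphi}||\geq 1$ and $||D_{\varphi}||\geq 1$.

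Combining the two bounds yields $||C_{\varphi}||=||D_{\varphi}||=1$, as claimed. I do not anticipate any real obstacle: the entire analytic content of the statement is carried by the subordination principle established above, and the present corollary adds only the trivial observation that the constants are fixed by both compositions. The one point worth stating explicitly is that the $n=0$ summand is genuinely the constant $a_0$ rather than $a_0$ times a nontrivial factor; this is exactly the convention $\varphi^{\ast 0}\equiv 1$, so no further verification is needed, and notably the hypothesis $\varphi(0)=0$ is not even used for the lower bound.
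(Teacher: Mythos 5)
Your proof is correct and follows exactly the paper's own argument: the upper bound $\|C_{\varphi}\|,\|D_{\varphi}\|\leq 1$ is read off from Theorem~\ref{th:LST-theorem}, and the lower bound comes from the identity $C_{\varphi}(1)=D_{\varphi}(1)=1$ applied to the constant function. Your additional remarks (the convention $\varphi^{\ast 0}\equiv 1$ and the fact that $\varphi(0)=0$ is only needed for the upper bound) are accurate but do not change the substance.
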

\begin{proof}
On one hand, it follows from Theorem \ref{th:LST-theorem} that $||C_{\varphi}||\leq1$ and $||D_{\varphi}||\leq1$. On the other hand, $C_{\varphi}(1)=D_{\varphi}(1)=1$, which implies that $||C_{\varphi}||\geq1$ and $||D_{\varphi}||\geq1$. Consequently, $||C_{\varphi}||=||D_{\varphi}||=1$. This completes the proof of Theorem \ref{first main result}.
\end{proof}

\section{Boundedness of composition operators}
   The boundedness of the  \emph{slice composition operators} $C_{\varphi}$ and $D_{\varphi}$ on $H^p(\mathbb B)$
  are studied in this section.

  Let $\varphi:\mathbb B\rightarrow \mathbb B$ be a regular function. It
   induces  operators $C_{\varphi}$ and $D_{\varphi}$ via
$$C_{\varphi}(f)=f^{\odot}\varphi, \qquad \qquad D_{\varphi}(f)=f_{\odot}\varphi.$$
We shall show that in $H^2(\mathbb B)$ we have
$$||C_{\varphi}||=||D_{\varphi}||=\bigg(\dfrac{1+|\varphi(0)|}{1-|\varphi(0)|}\bigg)^{\frac 12}$$
for any  regular M\"{o}bius transformation $\varphi$ of $\mathbb B$.

We first consider the variant of Theorem \ref{th:LST-theorem}
without the restriction $\varphi(0)=0$.

\begin{theorem}\label{eq:lst}
Let $\varphi:\mathbb B\rightarrow \mathbb B$ be a regular function such that $\varphi(\mathbb B_I)\subseteq \mathbb B_I $ for some $I\in \mathbb S $, then
$$||f^{\odot}\varphi||_2\leq \bigg(\dfrac{1+|\varphi(0)|}{1-|\varphi(0)|}\bigg)^{\frac 12}||f||_2,\qquad ||f_{\odot}\varphi||_2\leq \bigg(\dfrac{1+|\varphi(0)|}{1-|\varphi(0)|}\bigg)^{\frac 12}||f||_2$$
for any $f\in H^2(\mathbb B)$.
\end{theorem}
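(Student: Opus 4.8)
The plan is to reduce the problem to the classical case $\varphi(0)=0$ already handled in Theorem \ref{th:LST-theorem} by composing with a regular M\"{o}bius transformation that sends $\varphi(0)$ back to the origin. Since $\varphi(\mathbb B_I)\subseteq\mathbb B_I$ for some fixed $I\in\mathbb S$, the restriction $\varphi_I:=\varphi|_{\mathbb B_I}$ is a holomorphic self-map of the disc $\mathbb B_I\subset\mathbb C_I$, and by the Representation Formula (Theorem \ref{eq:formula}) the entire regular function $\varphi$ is determined by this single slice. This slice-preserving hypothesis is exactly what lets me borrow the complex-variable estimate on one slice and then transport it back to all of $\mathbb H$, so the first step is to set $a=\varphi(0)\in\mathbb B_I$ and introduce the M\"{o}bius transformation $\psi_a(z)=(a-z)(1-\bar a z)^{-1}$ of the disc $\mathbb B_I$, with $\psi_a(a)=0$.

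Next I would form $\Phi:={\rm ext}(\psi_a\circ\varphi_I)$. Because $\psi_a\circ\varphi_I$ is a holomorphic self-map of $\mathbb B_I$ fixing $0$, its regular extension $\Phi$ is a slice-preserving regular self-map of $\mathbb B$ with $\Phi(0)=0$, so Theorem \ref{th:LST-theorem} applies to give $\|g^{\odot}\Phi\|_2\le\|g\|_2$ for every $g\in H^2(\mathbb B)$. The key identity to establish is that composing with $\varphi$ factors as composing with $\Phi$ followed by composing with the (regular extension of the) inverse M\"{o}bius map ${\rm ext}(\psi_a^{-1})=:\mu_a$; on the slice $\mathbb B_I$ this is just $\varphi_I=\psi_a^{-1}\circ(\psi_a\circ\varphi_I)$, and by the uniqueness of regular extension (Extension Lemma \ref{eq:Extension}) together with the fact, established in Section 5, that iterated $\odot$-composition of slice-preserving maps coincides with the extension of the ordinary composition of their restrictions, this lifts to $f^{\odot}\varphi=(f^{\odot}\mu_a)^{\odot}\Phi$ as regular functions on $\mathbb B$. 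Thus $\|f^{\odot}\varphi\|_2=\|(f^{\odot}\mu_a)^{\odot}\Phi\|_2\le\|f^{\odot}\mu_a\|_2$.

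It then remains to bound $\|f^{\odot}\mu_a\|_2$, where $\mu_a$ is the regular M\"{o}bius transformation with $\mu_a(0)=\psi_a^{-1}(0)=a$, hence $|\mu_a(0)|=|a|=|\varphi(0)|$. This is the genuine quantitative content and the step I expect to be the main obstacle: I must produce the factor $\big((1+|\varphi(0)|)/(1-|\varphi(0)|)\big)^{1/2}$. The cleanest route is to pass to boundary values. On each slice $\mathbb B_I$, the change of variables $e^{I\theta}\mapsto\mu_a(e^{I\theta})$ carries the normalized arc-length measure $\tfrac1{2\pi}d\theta$ to a measure whose density is the Poisson kernel $P_a$, and one computes $\sup_\theta P_a\big(\mu_a(e^{I\theta})\big)=(1+|a|)/(1-|a|)$. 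Concretely, writing $f^{\odot}\mu_a$ on the slice $\mathbb B_I$ as $f_I\circ(\mu_a)_I$ (again using that $\mu_a$ is slice preserving), the integral $\tfrac1{2\pi}\int_{-\pi}^{\pi}|f_I(\mu_a(e^{I\theta}))|^2\,d\theta$ is estimated by pulling back to the $f_I$-side and dominating the Jacobian factor by its supremum $(1+|a|)/(1-|a|)$, exactly as in the classical Littlewood argument for $H^2(\mathbb D)$.

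Finally I would discharge the technicalities: the above integral manipulation is first justified for regular polynomials $f$ (where $f^{\odot}\varphi$ is bounded and the boundary integrals converge by Proposition \ref{prop:H^2-norm}), and then extended to arbitrary $f\in H^2(\mathbb B)$ by the same density-and-uniform-convergence argument used at the end of the proof of Theorem \ref{th:LST-theorem}, invoking Lemmas \ref{eq:H^2 modulus} and \ref{eq:H^2 convergence}. Taking the supremum over $I\in\mathbb S$ and using that the boundary integral in Proposition \ref{prop:H^2-norm} is independent of $I$ yields the stated bound for $f^{\odot}\varphi$. The estimate for $f_{\odot}\varphi$ follows immediately by the regular conjugation trick: using $(f_{\odot}\varphi)^c=f^{c\,\odot}\varphi^c$ from equation (\ref{eq:RC-con2}), the norm identities $\|f^c\|_2=\|f\|_2$ and $|\varphi^c(0)|=|\varphi(0)|$ (Propositions \ref{prop:H^2-norm} and \ref{prop:conjugate1}), and the fact that $\varphi^c$ is also slice preserving, we get $\|f_{\odot}\varphi\|_2=\|f^{c\,\odot}\varphi^c\|_2\le\big((1+|\varphi(0)|)/(1-|\varphi(0)|)\big)^{1/2}\|f^c\|_2$, which equals the desired bound.
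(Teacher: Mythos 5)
Your argument is correct, but it follows a genuinely different route from the paper's. The paper decomposes the coefficients of $f$ along $\mathbb C_I$ and $\mathbb C_I J$, writing $f=g+hJ$ with $\|f\|_2^2=\|g\|_2^2+\|h\|_2^2$, notes that on the preserved slice the regular compositions reduce to ordinary ones componentwise --- $(f^{\odot}\varphi)_I=g_I\circ\varphi_I+(h_I\circ\varphi_I)J$ and $(f_{\odot}\varphi)_I=g_I\circ\varphi_I+h_I\circ\overline{\varphi_I(\bar\cdot)}\,J$, the latter via $J\ast\psi(z)=\overline{\psi(\bar z)}J$ --- and then cites the classical unrestricted Littlewood bound (Corollary 3.7 of \cite{Cowen}) for each complex component before recombining with Proposition \ref{prop:H^2-norm}. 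You instead transplant the \emph{proof} of that classical corollary to the quaternionic level: factor $\varphi=\mu_a^{\odot}\Phi$ with $\Phi(0)=0$, apply the paper's own quaternionic Littlewood principle (Theorem \ref{th:LST-theorem}) to $\Phi$, and control the M\"obius factor by the change-of-variables/Poisson-kernel estimate on a slice. One subtle point deserves emphasis: your identity $f^{\odot}\varphi=(f^{\odot}\mu_a)^{\odot}\Phi$ is formally an instance of associativity of $\odot$, which the paper shows \emph{fails} in general; it holds here only because $\mu_a$ and $\Phi$ both preserve $\mathbb B_I$, so the identity can be verified on that slice (where composition is ordinary and associative) and propagated by the Identity Principle \ref{th:IP-theorem} --- which is exactly how you justify it, so there is no gap, though your citation of Section 5 is not quite the right reference: the fact you actually need, namely $(F^{\odot}\sigma)_I=F_I\circ\sigma_I$ whenever $\sigma(\mathbb B_I)\subseteq\mathbb B_I$, follows directly from the definition of $\odot$ and is stated and used by the paper in Section 7 in the form $f^{\odot}\varphi=\mathrm{ext}(f_I\circ\varphi_I)$. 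As for what each approach buys: the paper's is shorter and yields the explicit slice formulas (including the conjugate twist in the $f_{\odot}\varphi$ case) at the price of importing the full classical result wholesale, while yours is more self-contained within the quaternionic theory, importing only the elementary M\"obius change of variables and demonstrating that the classical factorization strategy survives noncommutativity; your handling of $f_{\odot}\varphi$ by regular conjugation coincides with the paper's own ``alternative easier method.''
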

\begin{proof}
Consider the power series expansion of $f$
$$f(q)=\sum\limits_{n=0}^{\infty}q^n \hat{f}(n).$$
Taking $J\in\mathbb S$ with $J\perp I$, we can decompose $\hat{f}(n)$ in the form
$$\hat{f}(n)=\hat{g}(n)+\hat{h}(n)J,$$
where $\hat{g}(n)$, $\hat{h}(n)\in \mathbb C_I$ for all $n\in \mathbb N.$
Consequently,$$f(q)=g(q)+h(q)J$$
and$$||f||_2^2=||g||_2^2+||h||_2^2,$$
where $g(q)=\sum\limits_{n=0}^{\infty}q^n \hat{g}(n)$ and $h(q)=\sum\limits_{n=0}^{\infty}q^n \hat{h}(n)$ are two regular functions on $\mathbb B$ such that each of them maps $\mathbb B_I$ into itself respectively. By assumption, $\varphi(\mathbb B_I)\subseteq \mathbb B_I $, so that
\begin{eqnarray*}
(f_{\odot}\varphi)_I(z)&=&\sum\limits_{n=0}^{\infty}\big(\hat{g}(n)+\hat{h}(n)J\big)\ast \varphi_I^ n(z)\\
&=&\sum\limits_{n=0}^{\infty}\varphi_I^{n}(z)\hat{g}(n)+\sum\limits_{n=0}^{\infty}\big(\overline{\varphi_I(\bar z)}\big)^{n}\hat{h}(n)J\\
&=&g_I\circ \varphi_I(z)+h_I\circ \overline{\varphi_I(\bar z)}J.
\end{eqnarray*}

In the penultimate equation we have used the fact that $J \ast \psi(z)=\overline{\psi(\bar z)}J$ for any holomorphic function $\psi$ in the variable $z\in \mathbb C_I $ and $I$, $J \in \mathbb S$ with $I\perp J$, which can be obtained from the definition of $\ast$-product.

Now it follows from the classical theory of composition operators on the Hardy space $ H^2$ (cf. Corollary 3.7 in \cite{Cowen}) and Proposition \ref{prop:H^2-norm} that
\begin{eqnarray*}
\big|\big|f_{\odot}\varphi\big|\big|_2^2=\big|\big|(f_{\odot}\varphi)_I\big|\big|_2^2&=&\big|\big|g_I\circ \varphi_I\big|\big|_2^2+\big|\big|h_I\circ \overline{\varphi_I(\bar \cdot)}\big|\big|_2^2\\
&\leq& \dfrac{1+|\varphi(0)|}{1-|\varphi(0)|}\Big(||g_I||_2^2+||h_I||_2^2\Big)\\
&=&\dfrac{1+|\varphi(0)|}{1-|\varphi(0)|}\Big(||g||_2^2+||h||_2^2\Big)\\
&=&\dfrac{1+|\varphi(0)|}{1-|\varphi(0)|}||f||_2^2.
\end{eqnarray*}

As for $f^{\odot}\varphi$, we can prove the result in the very same manner, it suffices to notice that $$(f^{\odot}\varphi)_I(z)=g_I\circ \varphi_I(z)+h_I\circ \varphi_I(z)J.$$

As in Theorem \ref{th:LST-theorem}, an alternative easier method is the conjugate method, which is the following
$$\big|\big|f^{\odot}\varphi\big|\big|_2=\big|\big|(f^{\odot}\varphi)^c\big|\big|_2=\big|\big|f^c_{\odot}\varphi^c\big|\big|_2\leq \bigg(\dfrac{1+|\varphi^c(0)|}{1-|\varphi^c(0)|}\bigg)^{\frac 12}||f^c||_2= \bigg(\dfrac{1+|\varphi(0)|}{1-|\varphi(0)|}\bigg)^{\frac 12}||f||_2.$$
In the second equation we have used equation (\ref{eq:RC-con1}).
Now the proof is completed.
\end{proof}
\begin{corollary} \label{eq:RC-norm}
Let $\varphi:\mathbb B\rightarrow \mathbb B$ be a regular function such that $\varphi(\mathbb B_I)\subseteq \mathbb B_I $ for some $I\in \mathbb S $, then $C_{\varphi}$ and $D_{\varphi}$ are two bounded composition operators on $H^2(\mathbb B)$. Moreover, $$\bigg(\dfrac1{1-|\varphi(0)|^2}\bigg)^{\frac 12}\leq||C_{\varphi}||\leq\bigg(\dfrac{1+|\varphi(0)|}{1-|\varphi(0)|}\bigg)^{\frac 12}$$ and $$\bigg(\dfrac1{1-|\varphi(0)|^2}\bigg)^{\frac 12}\leq||D_{\varphi}||\leq\bigg(\dfrac{1+|\varphi(0)|}{1-|\varphi(0)|}\bigg)^{\frac 12}.$$
\end{corollary}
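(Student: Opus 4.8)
The plan is to obtain the two inequalities from opposite directions. The upper bounds
$\|C_\varphi\|\le\left(\frac{1+|\varphi(0)|}{1-|\varphi(0)|}\right)^{1/2}$ and
$\|D_\varphi\|\le\left(\frac{1+|\varphi(0)|}{1-|\varphi(0)|}\right)^{1/2}$ are immediate: since $C_\varphi f=f^{\odot}\varphi$ and $D_\varphi f=f_{\odot}\varphi$, Theorem \ref{eq:lst} asserts precisely that these operators are bounded with the stated constant (and that the images lie in $H^2(\mathbb B)$). So the genuine content of the statement is the lower bound $\|C_\varphi\|\ge(1-|\varphi(0)|^2)^{-1/2}$ together with its analogue for $D_\varphi$, and for this I would imitate the classical reproducing-kernel test.

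The key elementary observation is that evaluating a $\ast$-product at the origin collapses it to the ordinary quaternionic product: by Proposition \ref{prop:RP}, $g\ast h(0)=g(0)h(0)$, whence $\varphi^{\ast n}(0)=\varphi(0)^n$. Writing $f(q)=\sum_n q^n a_n$ and $w=\varphi(0)$, this gives
\[
C_\varphi(f)(0)=\sum_{n=0}^\infty \varphi^{\ast n}(0)\,a_n=\sum_{n=0}^\infty w^n a_n=f(w),
\qquad
D_\varphi(f)(0)=\sum_{n=0}^\infty a_n\,w^n .
\]
Combined with the trivial bound $\|g\|_2\ge|g(0)|$ furnished by Proposition \ref{prop:H^2-norm} (the modulus of the zeroth Taylor coefficient is dominated by the full $\ell^2$-norm of the coefficients), this yields $\|C_\varphi f\|_2\ge|f(w)|$ and $\|D_\varphi f\|_2\ge\big|\sum_n a_n w^n\big|$.

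It then remains to insert an optimal test function, for which I would take the reproducing kernel at $w=\varphi(0)$, namely $f(q)=\sum_{n=0}^\infty q^n\bar{w}^{n}$; this lies in $H^2(\mathbb B)$ since $\sum_n|w|^{2n}<\infty$. Using $w^n\bar{w}^{n}=\bar{w}^{n}w^n=|w^n|^2=|w|^{2n}$ one finds $f(w)=\sum_n|w|^{2n}=(1-|w|^2)^{-1}$ and $\|f\|_2^2=\sum_n|w|^{2n}=(1-|w|^2)^{-1}$, and the same real scalars appear in $\sum_n\bar{w}^{n}w^n$, so $C_\varphi$ and $D_\varphi$ produce the identical value at the origin. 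Hence
\[
\frac{\|C_\varphi f\|_2}{\|f\|_2}\ge\frac{|f(w)|}{\|f\|_2}=\frac{(1-|w|^2)^{-1}}{(1-|w|^2)^{-1/2}}=\big(1-|\varphi(0)|^2\big)^{-1/2},
\]
and likewise for $D_\varphi$, which is the asserted lower bound. For $D_\varphi$ one may alternatively invoke the conjugation identity (\ref{eq:RC-con2}) together with the isometry $\|f^c\|_2=\|f\|_2$ from Proposition \ref{prop:conjugate1} to reduce to $C_{\varphi^c}$, noting $|\varphi^c(0)|=|\varphi(0)|$. The only delicate point — the would-be obstacle — is the bookkeeping imposed by noncommutativity: one must verify that evaluation at the origin really converts $\ast$ into ordinary multiplication, and that the products $w^n\bar{w}^{n}$ reduce to the real numbers $|w|^{2n}$, so that the classical extremal reproducing-kernel computation survives verbatim in the quaternionic Hilbert module $H^2(\mathbb B)$.
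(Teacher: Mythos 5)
Your proposal is correct, and for the lower bound it takes a genuinely different route from the paper. The paper restricts $C_\varphi$ to the slice, i.e.\ considers $C_{\varphi_I}$ on $H^2(\mathbb B_I)$, invokes the classical adjoint identity $C^{\ast}_{\varphi_I}(K_w)=K_{\varphi_I(w)}$ for evaluation kernels (Corollary 2.11 of \cite{Cowen}) together with the norm identity $\|C_\varphi\|=\|C_{\varphi_I}\|$, and then sets $w=0$; the case of $D_\varphi$ is handled by the conjugation identity $(D_{\varphi}f)^c=C_{\varphi^c}(f^c)$, just as in your alternative remark. You instead stay entirely inside the quaternionic space: the collapse of the $\ast$-product at the origin gives $C_\varphi(f)(0)=f(\varphi(0))$ and $D_\varphi(f)(0)=\sum_n a_n\varphi(0)^n$, the coefficient estimate of Proposition \ref{prop:H^2-norm} gives $\|g\|_2\ge|g(0)|$, and the quaternionic kernel $\sum_n q^n\overline{\varphi(0)}^{\,n}$ serves as the extremal test function. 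Your computations (including $w^n\bar w^n=|w|^{2n}$ and the termwise evaluation, justified by the locally uniform convergence in Remark \ref{eq:R1}) are sound. What your route buys: it is self-contained (no appeal to the classical adjoint theory, and no need for the identity $\|C_\varphi\|=\|C_{\varphi_I}\|$, which the paper asserts without proof), and your lower bound does not actually use the hypothesis $\varphi(\mathbb B_I)\subseteq\mathbb B_I$ at all --- it holds for every regular self-map of $\mathbb B$, the hypothesis being needed only for the upper bound via Theorem \ref{eq:lst}. What the paper's route buys: inequality (\ref{eq:C-norm4}) gives the whole family of lower bounds $\|C_\varphi\|\ge\bigl((1-|w|^2)/(1-|\varphi_I(w)|^2)\bigr)^{1/2}$ for all $w\in\mathbb B_I$, not just $w=0$, and this flexibility is exactly what is exploited in the next theorem (letting $w\to\partial\mathbb B$ along a ray) to prove sharpness for regular M\"obius transformations; your origin-evaluation trick does not extend to interior points $w\neq 0$, since evaluation away from the origin no longer converts the $\ast$-product into a pointwise product.
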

\begin{proof}
The two upper bounds follow from Theorem \ref{eq:lst}. To prove the two lower bounds, consider the restriction $C_{\varphi_I}$ of $C_{\varphi}$ to $ H^2(\mathbb B_I)$ and its adjoint $C^{\ast}_{\varphi_I}$ acting on the evaluation kernels $K_w$ in $ H^2(\mathbb B_I)$, then by the classical $ H^2$ theory (see for instance Corollary 2.11 in \cite{Cowen}), we have
\begin{eqnarray}\label{eq:C}
C^{\ast}_{\varphi_I}(K_w)=K_{\varphi_I(w)}
\end{eqnarray}
and$$ K_w(z)=\frac1{1-\overline{w}z}\qquad with \qquad||K_w||_2=\bigg(\dfrac1{1-|w|^2}\bigg)^{\frac12}$$
for all $z$, $w\in \mathbb B_I$.
It is easy to prove that
\begin{eqnarray}\label{eq:C-norm}
||C_{\varphi}||=||C_{\varphi_I}||.
\end{eqnarray}
Now it follows from (\ref{eq:C}) and (\ref{eq:C-norm}) that
\begin{eqnarray}\label{eq:C-norm4}
||C_{\varphi}||=||C_{\varphi_I}||=||C^{\ast}_{\varphi_I}||\geq\frac{||K_{\varphi_I(w)}||_2}{||K_w||_2}=\bigg(\frac{1-|w|^2}{1-|\varphi_I(w)|^2}\bigg)^{\frac12}.
\end{eqnarray}
Taking $w=0$ yields that
$$||C_{\varphi}||\geq\bigg(\frac{1}{1-|\varphi(0)|^2}\bigg)^{\frac12}.$$
It follows from (\ref{eq:RC-con1}) that$$(D_{\varphi}(f))^c=C_{\varphi^c}(f^c),$$
from which we can easily prove that $$||D_{\varphi}||=||C_{\varphi^c}||.$$
Consequently,$$||D_{\varphi}||\geq\bigg(\frac{1}{1-|\varphi^c(0)|^2}\bigg)^{\frac12}=\bigg(\frac{1}{1-|\varphi(0)|^2}\bigg)^{\frac12},$$
which completes the proof.
\end{proof}
If $\varphi$ is a regular M\"{o}bius transformation $\varphi_a$ of $\mathbb B$ (see \cite{Stoppato1}), i.e. \begin{equation}\label{eq:mobius-transform}
\varphi_a(q)=(1-q\bar{a})^{-\ast}\ast(a-q)=
(a-q)\ast(1-q\bar{a})^{-\ast}=a-(1-|a|^2)\sum\limits_{n=1}^{\infty}q^n\bar{a}^{n-1} \end{equation}
 for some $a\in\mathbb B $, we can obtain more precise result, which is as follows.
\begin{theorem}
Let $\varphi$ be a regular M\"{o}bius transformation of $\mathbb B$, then
the composition operators $C_{\varphi}$ and $D_{\varphi}$ are bounded on $H^2(\mathbb B)$ with norms
$$||C_{\varphi}||=||D_{\varphi}||=\bigg(\dfrac{1+|\varphi(0)|}{1-|\varphi(0)|}\bigg)^{\frac 12}.$$
\end{theorem}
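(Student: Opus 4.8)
The plan is to combine the upper bound that Theorem \ref{eq:lst} already supplies with a \emph{sharpened} lower bound extracted from the reproducing-kernel estimate (\ref{eq:C-norm4}), the improvement coming from the special structure of automorphisms. We may assume $a:=\varphi(0)\neq 0$, the case $a=0$ being exactly the content of Theorem \ref{th:LST-theorem}.

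First I would observe that a regular M\"obius transformation $\varphi=\varphi_a$ of the form (\ref{eq:mobius-transform}) is slice preserving. Choosing $I\in\mathbb S$ with $a\in\mathbb C_I$ (any $I$ works when $a$ is real), every Taylor coefficient of $\varphi_a$, namely $a$ and $-(1-|a|^2)\bar a^{\,n-1}$, lies in $\mathbb C_I$; hence $\varphi(\mathbb B_I)\subseteq\mathbb B_I$ and the restriction $\varphi_I$ is precisely the classical disc automorphism $z\mapsto(a-z)(1-\bar a z)^{-1}$ of $\mathbb B_I\cong\mathbb D$. In particular the hypotheses of Theorem \ref{eq:lst} and Corollary \ref{eq:RC-norm} are met, which yields the upper bounds $||C_\varphi||,\,||D_\varphi||\le\big(\tfrac{1+|a|}{1-|a|}\big)^{1/2}$ immediately.

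The heart of the proof is the matching lower bound $||C_\varphi||\ge\big(\tfrac{1+|a|}{1-|a|}\big)^{1/2}$, where the generic estimate of Corollary \ref{eq:RC-norm} (which, taking $w=0$, gives only $(1-|a|^2)^{-1/2}$) must be upgraded. Using $||C_\varphi||=||C_{\varphi_I}||$ from (\ref{eq:C-norm}) and the adjoint-on-kernels inequality (\ref{eq:C-norm4}), I would keep $w$ free and invoke the automorphism identity $1-|\varphi_I(w)|^2=\frac{(1-|a|^2)(1-|w|^2)}{|1-\bar a w|^2}$, valid on $\mathbb C_I$, which recasts (\ref{eq:C-norm4}) as
$$||C_\varphi||\ge\bigg(\frac{1-|w|^2}{1-|\varphi_I(w)|^2}\bigg)^{\frac12}=\bigg(\frac{|1-\bar a w|^2}{1-|a|^2}\bigg)^{\frac12}.$$
Letting $w\to -a/|a|$, a boundary point of $\mathbb B_I$, forces $|1-\bar a w|\to 1+|a|$, so that $||C_\varphi||\ge\big(\tfrac{(1+|a|)^2}{1-|a|^2}\big)^{1/2}=\big(\tfrac{1+|a|}{1-|a|}\big)^{1/2}$. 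Together with the upper bound this pins down $||C_\varphi||=\big(\tfrac{1+|a|}{1-|a|}\big)^{1/2}$.

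Finally, for $D_\varphi$ I would pass to the regular conjugate. By (\ref{eq:RC-con1}) one has $(D_\varphi f)^c=C_{\varphi^c}(f^c)$, and since $f\mapsto f^c$ is a $2$-norm isometry (Propositions \ref{prop:H^2-norm} and \ref{prop:conjugate1}), it follows that $||D_\varphi||=||C_{\varphi^c}||$. As $\varphi^c$ is again a regular M\"obius transformation with $|\varphi^c(0)|=|\varphi(0)|=|a|$ (immediate from the splitting $\varphi_I=F+GJ$ evaluated at $0$), the formula just established for $C$ gives $||D_\varphi||=\big(\tfrac{1+|a|}{1-|a|}\big)^{1/2}$. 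I expect the only genuine obstacle to be the lower bound: recognizing that the crude $w=0$ estimate can be pushed to the sharp constant by using the automorphism identity and the extremal direction $w\to -a/|a|$; the slice reduction and the conjugate argument for $D_\varphi$ are then routine.
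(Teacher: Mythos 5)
Your proposal is correct and follows essentially the same route as the paper: upper bound from Corollary \ref{eq:RC-norm}, sharp lower bound from the kernel estimate (\ref{eq:C-norm4}) combined with the identity $1-|\varphi_I(w)|^2=\frac{(1-|a|^2)(1-|w|^2)}{|1-\bar a w|^2}$ and the extremal limit $w\to -a/|a|$ (the paper writes $w=-re^{I\theta}$, $r\to 1^-$, which is the same limit), and then $||D_\varphi||=||C_{\varphi^c}||$ via regular conjugation. One small terminological caution: $\varphi_a$ is not ``slice preserving'' in the paper's sense (which requires $\varphi(\mathbb B_I)\subseteq\mathbb C_I$ for \emph{all} $I$); it only preserves the particular slice $\mathbb C_I$ containing $a$, which is exactly what your argument actually uses.
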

\begin{proof}
Let $\varphi(q)=(1-q\bar{a})^{-\ast}\ast(a-q)=(a-q)\ast(1-q\bar{a})^{-\ast}$ for some $a\in\mathbb B_I $, then $\varphi(\mathbb B_I)=\mathbb B_I$, it follows from Corollary \ref{eq:RC-norm} that
\begin{eqnarray}\label{eq:C1}
||C_{\varphi}(f)||_2\leq\bigg(\dfrac{1+|\varphi(0)|}{1-|\varphi(0)|}\bigg)^{\frac 12}||f||_2
\end{eqnarray}
and
\begin{eqnarray}\label{eq:C2}
||D_{\varphi}(f)||_2\leq\bigg(\dfrac{1+|\varphi(0)|}{1-|\varphi(0)|}\bigg)^{\frac 12}||f||_2,
\end{eqnarray}
which are the two upper inequalities. Now we show that each of them is sharp respectively.
An easy calculation gives
\begin{eqnarray}\label{eq:C3}
1-|\varphi_I(w)|^2=\frac{(1-|a|^2)(1-|w|^2)}{|1-w\bar{a}|^2},
\end{eqnarray}
Substituting (\ref{eq:C3}) into (\ref{eq:C-norm4}) yields
$$||C_{\varphi}||\geq\bigg(\frac{|1-w\bar{a}|^2}{1-|a|^2}\bigg)^{\frac12},\qquad\forall\,\, w\in\mathbb B_I.$$
Now if $a=|a|e^{I\theta}$, we take $w=-re^{I\theta}$ so that
$$||C_{\varphi}||\geq\lim\limits_{r\rightarrow 1^-}\bigg(\frac{|1-w\bar{a}|^2}{1-|a|^2}\bigg)^{\frac12}=\bigg(\frac{1+|a|}{1-|a|}\bigg)^{\frac12}=\bigg(\frac{1+|\varphi(0)|}{1-|\varphi(0)|}\bigg)^{\frac12},$$
which implies that inequality (\ref{eq:C1}) is sharp and $$||C_{\varphi}||=\bigg(\dfrac{1+|\varphi(0)|}{1-|\varphi(0)|}\bigg)^{\frac 12}.$$
It follows from (\ref{eq:RC-con1}) that$$(D_{\varphi}(f))^c=C_{\varphi^c}(f^c),$$
from which we can easily prove that $$||D_{\varphi}||=||C_{\varphi^c}||=\bigg(\dfrac{1+|\varphi^c(0)|}{1-|\varphi^c(0)|}\bigg)^{\frac 12}=\bigg(\dfrac{1+|\varphi(0)|}{1-|\varphi(0)|}\bigg)^{\frac 12}.$$
Notice that $\varphi(\mathbb B_I)\subseteq\mathbb B_I$, the very definition of regular composition and the identity principle \ref{th:IP-theorem} allow us to conclude that
$$f^{\odot}\varphi={\rm{ext}}(f_I\circ\varphi_I),$$
and hence
$$(f^{\odot}\varphi)^{\odot}\varphi={\rm{ext}}((f^{\odot}\varphi)_I\circ\varphi_I)
={\rm{ext}}((f_I\circ\varphi_I)\circ\varphi_I)={\rm{ext}}(f_I\circ(\varphi_I\circ\varphi_I))
={\rm{ext}}(f_I)=f.$$
Namely,$$C_{\varphi}(C_{\varphi}(f))=f.$$
Consequently, $$||f||_2=||C_{\varphi}(C_{\varphi}(f))||_2\leq||C_{\varphi}||||C_{\varphi}(f)||_2,$$
that is,
\begin{eqnarray}\label{eq:C-norm1}
||C_{\varphi}(f)||_2\geq\frac{1}{||C_{\varphi}||}||f||_2=\bigg(\dfrac{1-|\varphi(0)|}{1+|\varphi(0)|}\bigg)^{\frac 12}||f||_2.
\end{eqnarray}
Similarly, we can prove that $D_{\varphi}(D_{\varphi}(f))=f,$
which implies that
$$||f||_2\leq||D_{\varphi}||||D_{\varphi}(f)||_2,$$
and hence
\begin{eqnarray}\label{eq:C-norm2}||D_{\varphi}(f)||_2\geq\frac{1}{||D_{\varphi}||}||f||_2=\bigg(\dfrac{1-|\varphi(0)|}{1+|\varphi(0)|}\bigg)^{\frac 12}||f||_2.
\end{eqnarray}
Finally, Let $\left\{f_n\right\}$ be a sequence that exhibits the norm of $C_{\varphi}$, then the substitution $g_n=C_{\varphi}g_n$ shows that the inequality (\ref{eq:C-norm1}) is sharp. Similarly, we can prove that the inequality (\ref{eq:C-norm2}) is sharp also.

As a result, we have proved that
$$\bigg(\dfrac{1-|\varphi(0)|}{1+|\varphi(0)|}\bigg)^{\frac 12}||f||_2\leq||C_{\varphi}(f)||_2\leq\bigg(\dfrac{1+|\varphi(0)|}{1-|\varphi(0)|}\bigg)^{\frac 12}||f||_2$$
and
$$\bigg(\dfrac{1-|\varphi(0)|}{1+|\varphi(0)|}\bigg)^{\frac 12}||f||_2\leq||D_{\varphi}(f)||_2\leq\bigg(\dfrac{1+|\varphi(0)|}{1-|\varphi(0)|}\bigg)^{\frac 12}||f||_2.$$
Moreover, these inequalities are best possible.
\end{proof}

Now we consider the $H^p(\mathbb B)$ version  of Theorem \ref{th:LST-theorem}.

\begin{theorem}
Let $\varphi:\mathbb B\rightarrow \mathbb B$ be a regular function such that $\varphi(\mathbb B_I)\subseteq \mathbb B_I $ for some $I\in \mathbb S $, then $C_{\varphi}$ and $D_{\varphi}$ are two bounded composition operators on $H^p(\mathbb B)$
for any $1\leq p<\infty$. Moreover,
$$ ||C_{\varphi}||_p \simeq  ||D_{\varphi}||_p \simeq (1-|\varphi(0)|^2)^{-\frac 1p}.$$

\end{theorem}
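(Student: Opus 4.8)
The plan is to reduce the quaternionic estimate to the classical theory of composition operators on the complex Hardy space of the disc, exactly as in the proof of Theorem~\ref{eq:lst}, but now tracking $p$-dependent constants instead of exact identities. First I would fix the distinguished slice $I\in\mathbb S$ with $\varphi(\mathbb B_I)\subseteq\mathbb B_I$ and split $f_I=g_I+h_IJ$ with $J\perp I$, where $g_I,h_I:\mathbb B_I\to\mathbb C_I$ are holomorphic. Recalling from the proof of Theorem~\ref{eq:lst} the slice identities
$$(f^{\odot}\varphi)_I=g_I\circ\varphi_I+(h_I\circ\varphi_I)J,\qquad (f_{\odot}\varphi)_I=g_I\circ\varphi_I+\big(h_I\circ\widetilde{\varphi}_I\big)J,$$
where $\widetilde{\varphi}_I(z):=\overline{\varphi_I(\bar z)}$ is again a holomorphic self-map of $\mathbb B_I$ with $|\widetilde{\varphi}_I(0)|=|\varphi(0)|$, reduces both operators to genuine complex composition operators acting on the two scalar components.

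The two comparability estimates that replace the Pythagorean identity available when $p=2$ are as follows. On one hand, the representation formula (Theorem~\ref{eq:formula}) gives $|f(re^{K\theta})|\leq|f(re^{I\theta})|+|f(re^{-I\theta})|$ for every $K\in\mathbb S$, whence, after integrating and using the change of variable $\theta\mapsto-\theta$, one obtains
$$\|f_I\|_{H^p(\mathbb B_I)}\leq\|f\|_{H^p(\mathbb B)}\leq 2\,\|f_I\|_{H^p(\mathbb B_I)};$$
this is where the $\sup_I$ in the definition of $\|\cdot\|_p$ is absorbed into a harmless constant. On the other hand the pointwise identity $|g_I+h_IJ|^2=|g_I|^2+|h_I|^2$ together with the elementary equivalence $(a^2+b^2)^{p/2}\simeq a^p+b^p$, with constants depending only on $p$, yields $\|f_I\|_{H^p(\mathbb B_I)}^p\simeq\|g_I\|_p^p+\|h_I\|_p^p$. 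Combining the two gives $\|f\|_{H^p(\mathbb B)}^p\simeq\|g_I\|_{H^p(\mathbb B_I)}^p+\|h_I\|_{H^p(\mathbb B_I)}^p$, and the same reduction applied to $f^{\odot}\varphi$ and $f_{\odot}\varphi$ expresses their norms in terms of $\|g_I\circ\varphi_I\|_p$, $\|h_I\circ\varphi_I\|_p$ and $\|h_I\circ\widetilde{\varphi}_I\|_p$.

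At this point the classical theory of composition operators on $H^p$ of the disc (see \cite{Cowen}) supplies both the upper bound $\|C_\psi\|_{H^p}\leq\big(\frac{1+|\psi(0)|}{1-|\psi(0)|}\big)^{1/p}$ and the lower bound $\|C_\psi\|_{H^p}\geq(1-|\psi(0)|^2)^{-1/p}$ for any holomorphic self-map $\psi$ of $\mathbb B_I$. Since $\big(\tfrac{1+t}{1-t}\big)^{1/p}$ and $(1-t^2)^{-1/p}$ differ only by the bounded factor $(1+t)^{2/p}\in[1,2^{2/p}]$, each complex composition norm is comparable to $(1-|\varphi(0)|^2)^{-1/p}$ with constants depending only on $p$; applying this to $\varphi_I$ and to $\widetilde{\varphi}_I$, which share the same value $|\varphi(0)|$, and summing the two scalar pieces delivers the upper estimate $\|C_\varphi\|_p,\|D_\varphi\|_p\leq C_p(1-|\varphi(0)|^2)^{-1/p}$. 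For the matching lower bound I would test each operator on a function $f$ whose splitting has $h_I\equiv0$ and $g_I$ a suitably normalized test function saturating the classical lower bound for $C_{\varphi_I}$; the comparabilities above then transfer this lower bound to the quaternionic norm, giving $\|C_\varphi\|_p,\|D_\varphi\|_p\geq c_p(1-|\varphi(0)|^2)^{-1/p}$.

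The main obstacle, and the reason the statement is phrased with $\simeq$ rather than with an exact norm, is precisely the loss of the isometric splitting at $p\neq2$: one must pass from the $\mathbb H$-valued modulus to the two scalar components through the inequality $(a^2+b^2)^{p/2}\simeq a^p+b^p$, and control the slice supremum in $\|\cdot\|_p$ through the representation formula, so that every step carries an unavoidable $p$-dependent constant. Keeping these constants uniform and verifying that the lower-bound test function survives both reductions, in particular that the choice $h_I\equiv0$ does not degrade the comparability, is the delicate bookkeeping; once it is in place, the boundedness of $C_\varphi$ and $D_\varphi$ on $H^p(\mathbb B)$ for $1\leq p<\infty$ and the asserted two-sided comparison follow at once.
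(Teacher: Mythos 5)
Your proposal is correct and follows essentially the same route as the paper: restrict everything to the preserved slice $\mathbb B_I$, compare $\|f\|_p$ with the slice norm $\|f_I\|_p$ up to $p$-dependent constants, and invoke the classical Cowen--MacCluer bounds for composition operators on the disc, noting that $\left(\frac{1+t}{1-t}\right)^{1/p}$ and $(1-t^2)^{-1/p}$ agree up to a bounded factor. The only minor differences are that the paper handles $D_\varphi$ through the regular conjugate identity $(D_\varphi f)^c = C_{\varphi^c}(f^c)$ instead of your direct slice identity involving $\widetilde{\varphi}_I(z)=\overline{\varphi_I(\bar z)}$, and your explicit test-function argument (taking $h_I\equiv 0$) for the lower bound is if anything more careful than the paper's, which writes the classical lower estimate as though it held for every $f$ rather than only at the level of operator norms.
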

\begin{proof}
First applying the classical theory of composition operator on the Hardy space $ H^p$ (cf. Corollary 3.7 in \cite{Cowen}) to $f_I\circ\varphi_I$ yields
\begin{eqnarray}\label{eq:p-norm}
\bigg(\dfrac{1}{1-|\varphi(0)|^2}\bigg)^{\frac 1p}||f_I||_p\leq\big|\big|f_I\circ\varphi_I\big|\big|_p\leq\bigg(\dfrac{1+|\varphi(0)|}{1-|\varphi(0)|}\bigg)^{\frac 1p}||f_I||_p
\end{eqnarray}
for any $f\in H^p(\mathbb B)$.
Taking into account the fundamental convexity equality $$(a+b)^p\leq2^{p-1}(a^p+b^p)$$ for any $a$, $b\in [0,\infty)$ and $p\geq1$, we can easily prove that
\begin{eqnarray}\label{eq:p-norm1}
||f_I||_p\leq||f||_p\leq2^{2-\frac1p}||f_I||_p
\end{eqnarray}
and
\begin{eqnarray}\label{eq:p-norm2}
||f^c||_p\leq2^{\frac1p}||f||_p.
\end{eqnarray}
On one hand, it follows from (\ref{eq:p-norm}), (\ref{eq:p-norm1}) and the fact that $\big(C_{\varphi}(f)\big)_I=f_I\circ\varphi_I$ that
$$\big|\big|C_{\varphi}(f)\big|\big|_p\leq
2^{2-\frac1p}\big|\big|f_I\circ\varphi_I\big|\big|_p
\leq2^{2-\frac1p}\bigg(\dfrac{1+|\varphi(0)|}{1-|\varphi(0)|}\bigg)^{\frac 1p}||f||_p,$$
which together with (\ref{eq:p-norm2}) give
\begin{eqnarray*}\label{eq:p-norm3}
\big|\big|D_{\varphi}(f)\big|\big|_p&\leq&
2^{\frac1p}\big|\big|\big(D_{\varphi}(f)\big)^c\big|\big|_p\\
&=&2^{\frac1p}\big|\big|C_{\varphi^c}(f^c)\big|\big|_p\\
&\leq&4\bigg(\dfrac{1+|\varphi(0)|}{1-|\varphi(0)|}\bigg)^{\frac 1p}||f^c||_p\\
&\leq&2^{2+\frac1p}\bigg(\dfrac{1+|\varphi(0)|}{1-|\varphi(0)|}\bigg)^{\frac 1p}||f||_p
\end{eqnarray*}
for any $f\in H^p(\mathbb B)$.
On the other hand, $$\big|\big|C_{\varphi}(f)\big|\big|_p=\big|\big|\big (C_{\varphi}(f)\big)_I\big|\big|_p=\big|\big|f_I\circ\varphi_I\big|\big|_p\geq2^{-(2-\frac1p)}\bigg(\dfrac{1}{1-|\varphi(0)|^2}\bigg)^{\frac 1p}||f||_p,$$
which implies that
\begin{eqnarray*}
||D_{\varphi}(f)||_p&\geq&2^{-\frac1p}\big|\big|\big(D_{\varphi}(f)\big)^c\big|\big|_p\\
&=&2^{-\frac1p}\big|\big|C_{\varphi^c}(f^c)\big|\big|_p\\
&\geq&\frac14\bigg(\dfrac{1}{1-|\varphi(0)|^2}\bigg)^{\frac 1p}||f^c||_p\\
&\geq& 2^{-(2+\frac1p)}\bigg(\dfrac{1}{1-|\varphi(0)|^2}\bigg)^{\frac 1p}||f||_p
\end{eqnarray*}
for any $f\in H^p(\mathbb B)$.

To be concluded, we have proved
$$2^{-(2-\frac1p)}\bigg(\dfrac1{1-|\varphi(0)|^2}\bigg)^{\frac 1p}\leq||C_{\varphi}||\leq 2^{2-\frac1p}\bigg(\dfrac{1+|\varphi(0)|}{1-|\varphi(0)|}\bigg)^{\frac 1p}$$ and $$2^{-(2+ \frac1p)}\bigg(\dfrac1{1-|\varphi(0)|^2}\bigg)^{\frac 1p}\leq||D_{\varphi}||\leq2^{2+ \frac1p}\bigg(\dfrac{1+|\varphi(0)|}{1-|\varphi(0)|}\bigg)^{\frac 1p}$$
as desired.
\end{proof}

We remark that the same arguments  as in the proof of the preceding theorem leads to the following result. More precisely,
let $\varphi$ is a regular M\"{o}bius transformation of $\mathbb B$, then $C_{\varphi}$ and $D_{\varphi}$ are two bounded composition operators on $H^p(\mathbb B)(1\leq p<\infty)$. Moreover,
$$2^{-(2-\frac1p)}\bigg(\dfrac{1-|\varphi(0)|}{1+|\varphi(0)|}\bigg)^{\frac 1p}||f||_p\leq||C_{\varphi}(f)||_p\leq2^{2-\frac1p}\bigg(\dfrac{1+|\varphi(0)|}{1-|\varphi(0)|}\bigg)^{\frac 1p}||f||_p$$
and
$$2^{-(2+\frac1p)}\bigg(\dfrac{1-|\varphi(0)|}{1+|\varphi(0)|}\bigg)^{\frac 1p}||f||_p\leq||D_{\varphi}(f)||_p\leq2^{2+\frac1p}\bigg(\dfrac{1+|\varphi(0)|}{1-|\varphi(0)|}\bigg)^{\frac 1p}||f||_p.$$

\section{Compactness of composition operators}
The compactness of composition operators $C_{\varphi}$ and $D_{\varphi}$ on $H^p(\mathbb B)$  is studied in this section.

 We need to specify precisely how much the inducing map $\varphi$ has to compress
the unit ball into itself in order to insure that the operators $C_{\varphi}$ and $D_{\varphi}$ compress bounded subsets of $H^p(\mathbb B)$ into relatively compact ones.
The most drastic way $\varphi$ can compress the unit ball is to take it to a point,
in which case the resulting composition operators $C_{\varphi}$ and $D_{\varphi}$ have one dimensional range (the space of constant functions), and are therefore compact. The next result shows that this compactness persists if we merely assume that $\varphi(\mathbb B)$ is relatively compact in $\mathbb B$.

\begin{theorem}\label{Compactness Theorem}
Let $\varphi:\mathbb B\rightarrow \mathbb B$ be a regular function on $\mathbb B$ such that
$||\varphi||_{\infty}<1$. Then both $C_{\varphi}$ and $D_{\varphi}$ are compact operators on $H^p(\mathbb B)$ for any $1\leq p\leq \infty$.
\end{theorem}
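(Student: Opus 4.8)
The plan is to verify compactness through the standard sequential criterion: an operator on $H^p(\mathbb B)$ is compact provided it sends every bounded sequence to a sequence possessing a norm-convergent subsequence. I would first record the normal-families fact that a bounded sequence $\{f_n\}$ in $H^p(\mathbb B)$ admits a subsequence converging uniformly on compact subsets of $\mathbb B$ to some $f\in H^p(\mathbb B)$; this follows from a pointwise growth estimate of the type in Lemma \ref{eq:H^2 modulus} (valid for every $1\le p\le\infty$, and trivial for $p=\infty$), which makes $\{f_n\}$ uniformly bounded on each $\overline{B(0,s)}$, combined with the Splitting Lemma \ref{eq:Splitting}, the classical Montel theorem on the slice $\mathbb B_I$, and the Representation Formula \ref{eq:formula} to propagate the slicewise limit to all of $\mathbb B$. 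With this in hand it suffices, after passing to such a subsequence and replacing $f_n$ by $f_n-f$, to prove the implication: if $\{f_n\}$ is bounded in $H^p(\mathbb B)$ and $f_n\to 0$ uniformly on compact subsets of $\mathbb B$, then $\|C_\varphi(f_n)\|_p\to 0$.

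The core of the argument is a single uniform estimate exploiting that $\varphi$ compresses $\mathbb B$. Put $r=\|\varphi\|_\infty<1$ and fix $s$ with $r<s<1$. By Remark \ref{eq:R1}, applied with the global bound $\sup_{q\in\mathbb B}|\varphi(q)|=r$, we have $|\varphi^{\ast m}(q)|\le r^m$ for every $q\in\mathbb B$ and every $m\in\mathbb N$. Writing $f_n(q)=\sum_{m=0}^\infty q^m a_{n,m}$, the Parseval identity of Proposition \ref{prop:H^2-norm} on the circle of radius $s$ (which holds for any regular function at each fixed radius $s<1$) gives the Cauchy-type bound $|a_{n,m}|\le s^{-m}\varepsilon_n$, where $\varepsilon_n=\max_{|q|=s}|f_n(q)|$. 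Since $\{|q|=s\}$ is a compact subset of $\mathbb B$, the hypothesis forces $\varepsilon_n\to 0$. Combining these,
\[
|C_\varphi(f_n)(q)|=\Big|\sum_{m=0}^\infty \varphi^{\ast m}(q)\,a_{n,m}\Big|\le \sum_{m=0}^\infty r^m\,s^{-m}\varepsilon_n=\frac{\varepsilon_n}{1-r/s}
\]
for all $q\in\mathbb B$, whence $\|C_\varphi(f_n)\|_\infty\le \varepsilon_n/(1-r/s)\to0$. As $\|\cdot\|_p\le\|\cdot\|_\infty$ on $H^p(\mathbb B)$ for every $p$, this yields $\|C_\varphi(f_n)\|_p\to 0$ and proves that $C_\varphi$ is compact on $H^p(\mathbb B)$ for all $1\le p\le\infty$ simultaneously.

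For $D_\varphi$ I would avoid repeating the estimate and instead transfer compactness through regular conjugation. By the identity $(D_\varphi f)^c=C_{\varphi^c}(f^c)$ from (\ref{eq:RC-con2}) one has $D_\varphi=K\circ C_{\varphi^c}\circ K$, where $K\colon f\mapsto f^c$ is the conjugation operator. Since $K$ is an involution that is bounded on $H^p(\mathbb B)$ with bounded inverse (by (\ref{eq:p-norm2}) for $p<\infty$, and isometric for $p=\infty$ by Proposition \ref{prop:conjugate1}(iv)), and since Proposition \ref{prop:conjugate1}(iv) also gives $\|\varphi^c\|_\infty=\|\varphi\|_\infty<1$, the first part shows $C_{\varphi^c}$ is compact; composing a compact operator with bounded operators on either side keeps it compact, so $D_\varphi$ is compact as well.

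The main obstacle, and really the only nonroutine point, is the normal-families input underlying the compactness criterion: I must make sure the uniform boundedness on compacts of a bounded $H^p$ family holds for all $p$, not merely $p=2$, and that the slicewise Montel limit genuinely defines an $H^p$ function on $\mathbb B$ via the Representation Formula, with $\|f\|_p\le\liminf_n\|f_n\|_p$ by a Fatou argument. Once this is secured the remaining estimates are elementary, the decisive mechanism being that $\|\varphi\|_\infty<1$ lets the geometric factor $(r/s)^m$ absorb the Cauchy growth $s^{-m}$ of the Taylor coefficients, collapsing everything to the single scalar $\varepsilon_n\to0$.
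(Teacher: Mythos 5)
Your proof is correct, and it establishes compactness by a genuinely different mechanism from the paper's. The paper argues by finite-rank approximation: it truncates the composition, $S_N f=\sum_{n=0}^{N}\varphi^{\ast n}\hat f(n)$, and shows $\|S_N-C_\varphi\|_p\to 0$ in operator norm, using $\|\varphi^{\ast n}\|_p\le\|\varphi\|_\infty^{n}$ together with coefficient bounds --- Parseval and Cauchy--Schwarz for $p=2$, and for $p\neq 2$ a Cauchy-type inequality $|f^{(n)}(0)|\le \sqrt2\,n!\,e^{1/p}(1+np/2)^{1/p}\|f\|_p$ proved expressly for that purpose --- the geometric factor $\|\varphi\|_\infty^{n}$ absorbing the polynomial growth. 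You instead verify the sequential criterion: a normal-families extraction (growth lemma, Splitting Lemma \ref{eq:Splitting}, Montel on a slice, Representation Formula \ref{eq:formula}), followed by the single estimate $\sup_{\mathbb B}|C_\varphi g|\le (1-r/s)^{-1}\max_{|q|=s}|g|$ with $r=\|\varphi\|_\infty<s<1$, which says that $C_\varphi$ is continuous from the compact-open topology into $H^\infty(\mathbb B)$; this handles all $1\le p\le\infty$ in one stroke, and your reduction $D_\varphi=K\circ C_{\varphi^c}\circ K$, with $K:f\mapsto f^c$ bounded on $H^p(\mathbb B)$ and $\|\varphi^c\|_\infty=\|\varphi\|_\infty<1$ by Proposition \ref{prop:conjugate1}, disposes of $D_\varphi$ more completely than the paper, whose $p\neq 2$ argument only treats $C_\varphi$ explicitly. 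Two observations. First, the point you flag as the main obstacle --- that the Montel limit $f$ lie in $H^p(\mathbb B)$, via subharmonicity and Fatou --- can be bypassed entirely: applying your estimate to differences gives $\|C_\varphi f_{n_k}-C_\varphi f_{n_j}\|_\infty\le (1-r/s)^{-1}\max_{|q|=s}|f_{n_k}-f_{n_j}|\to 0$, so $\{C_\varphi f_{n_k}\}$ is Cauchy in $H^\infty(\mathbb B)$, hence in every $H^p(\mathbb B)$, and its limit $C_\varphi f$ is automatically bounded; membership of $f$ itself in $H^p(\mathbb B)$ is never needed. Second, what the paper's route buys and yours does not is quantitative information: it exhibits $C_\varphi$ as an operator-norm limit of explicit finite-rank operators with error decaying essentially like $\|\varphi\|_\infty^{N}$, i.e.\ an approximation-number estimate, whereas the soft sequential argument yields compactness with no rate.
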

\begin{proof}
In the case that $p=2$, the result can be easily proved due to the speciality of $H^2(\mathbb B)$. For each positive integer $N$ define the operator $S_N:H^2(\mathbb B)\rightarrow H^2(\mathbb B)$ via
$$S_Nf(q)=\sum\limits_{n=0}^N \varphi^{\ast n}\hat{f}(n)$$
for any $f\in H^2(\mathbb B)$ with the Taylor expansion
$$f(q)=\sum\limits_{n=0}^{\infty}q^n \hat{f}(n)\in H^2(\mathbb B).$$
Thus $S_N$ maps $H^2(\mathbb B)$ onto the right $\mathbb H$-linear span of the first $n$ regular powers of $C_{\varphi}$. Obviously, $S_N$ is a bounded, finite
rank operator on $H^2(\mathbb B)$. We claim that $$||S_N-C_{\varphi}||_2\longrightarrow 0\quad as \:\:n\longrightarrow \infty.$$
This follows from the calculation below:
\begin{eqnarray}\label{Norm estimate}
\begin{split}
\big|\big|(S_N-C_{\varphi})f\big|\big|_2&=\Big|\Big|\sum\limits_{n=N+1}^{\infty} \varphi^{\ast n}\hat{f}(n)\Big|\Big|_2 \leq\sum\limits_{n=N+1}^{\infty}|\hat{f}(n)| \, ||\varphi^{\ast n}||_2\\
&\leq\sum\limits_{n=N+1}^{\infty}|\hat{f}(n)|\, ||\varphi||^n_{\infty}\leq \bigg(\sum\limits_{n=N+1}^{\infty}|\hat{f}(n)|^{2}\bigg)^{\frac{1}{2}}
\bigg(\sum\limits_{n=N+1}^{\infty}||\varphi||^{2n}_{\infty}\bigg)^{\frac{1}{2}}\\ &=\frac{||\varphi||^{N+1}_{\infty}}{(1-||\varphi||^2_{\infty})^{\frac{1}{2}}}||f||_2.
\end{split}
\end{eqnarray}
Thus
$$||S_N-C_{\varphi}||_2
\leq\frac{||\varphi||^{N+1}_{\infty}}{(1-||\varphi||^2_{\infty})^{\frac{1}{2}}}\longrightarrow 0\quad as \:\:n\longrightarrow \infty.$$
This exhibits $C_{\varphi}$ as an operator norm limit of finite rank operators, so it is compact on $H^2(\mathbb B)$. Similarly, it turns out that $D_{\varphi}$ is compact on $H^2(\mathbb B)$ as well.
\end{proof}
To prove Theorem \ref{Compactness Theorem} for $p\neq2$,   we need an $H^p(\mathbb B)$ version of Lemma \ref{eq:H^2 modulus}.
\begin{lemma}\label{eq:H^p modulus}
Let $f\in H^p(\mathbb B)$ with $1\leq p<\infty$, then for any $q\in \mathbb B$, $$|f(q)|\leq\sqrt 2\bigg(\dfrac1{1-|q|^2}\bigg)^{\frac{1}{p}}||f||_p.$$
\end{lemma}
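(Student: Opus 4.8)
The plan is to reduce the quaternionic inequality to the classical pointwise growth estimate on a single slice by means of the Splitting Lemma \ref{eq:Splitting}, exactly as Lemma \ref{eq:H^2 modulus} reduces to Cauchy--Schwarz in the Hilbert case. Fix $I\in\mathbb S$ and choose $J\in\mathbb S$ with $J\perp I$, and write the splitting $f_I(z)=F(z)+G(z)J$ with $F,G:\mathbb B_I\to\mathbb C_I$ holomorphic. Since $F(z),G(z)\in\mathbb C_I=\mathbb R\oplus I\mathbb R$ while $J$ and $IJ$ are orthogonal to $\mathbb C_I$, one reads off the orthogonal decomposition $|f(z)|^2=|F(z)|^2+|G(z)|^2$ for every $z\in\mathbb B_I$. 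In particular $|F|\le|f_I|$ and $|G|\le|f_I|$ pointwise, so, identifying $\mathbb B_I$ with the unit disc, both $F$ and $G$ lie in the classical Hardy space $H^p$ with
$$\|F\|_p,\ \|G\|_p\ \le\ \|f_I\|_p\ \le\ \|f\|_p,$$
the last inequality being immediate from $\|f\|_p=\sup_{I\in\mathbb S}\|f_I\|_p$ in Definition \ref{de: Hardy}.

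The single external ingredient I would invoke is the classical pointwise growth bound $|u(z)|\le(1-|z|^2)^{-1/p}\|u\|_p$, valid for every $u$ in the complex Hardy space $H^p$ and every $z$ in the disc. This is the $H^p$ analogue of the Cauchy--Schwarz estimate used in Lemma \ref{eq:H^2 modulus}; it follows by applying the elementary mean-value bound $|u(0)|\le\|u\|_p$ to the function $(u\circ\phi_z)(\phi_z')^{1/p}$, where $\phi_z$ is the disc automorphism interchanging $0$ and $z$, since $u\mapsto(u\circ\phi_z)(\phi_z')^{1/p}$ is an isometry of $H^p$ and $|\phi_z'(0)|=1-|z|^2$. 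Applying it to $F$ and to $G$ and using the norm inequalities above gives
$$|F(z)|\le\Big(\frac{1}{1-|z|^2}\Big)^{\frac1p}\|f\|_p,\qquad |G(z)|\le\Big(\frac{1}{1-|z|^2}\Big)^{\frac1p}\|f\|_p$$
for every $z\in\mathbb B_I$.

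Feeding these into the decomposition yields, for all $z\in\mathbb B_I$,
$$|f(z)|^2=|F(z)|^2+|G(z)|^2\le 2\Big(\frac{1}{1-|z|^2}\Big)^{\frac2p}\|f\|_p^2,$$
hence $|f(z)|\le\sqrt2\,(1-|z|^2)^{-1/p}\|f\|_p$. Because $I\in\mathbb S$ was arbitrary and every $q\in\mathbb B$ lies in some slice $\mathbb C_I$ (where $|f(q)|=|f_I(q)|$), this is precisely the asserted estimate for all $q\in\mathbb B$. The only genuinely non-routine point is the classical $H^p$ growth bound quoted above; the rest is bookkeeping, with the factor $\sqrt2$ arising exactly from adding the two identical slice estimates for $F$ and $G$.
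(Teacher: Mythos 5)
Your proof is correct and follows essentially the same route as the paper: splitting $f_I=F+GJ$, observing $|f_I|^2=|F|^2+|G|^2$ so that $\|F\|_p,\|G\|_p\le\|f\|_p$, applying the classical $H^p$ pointwise growth estimate to $F$ and $G$, and recombining to get the factor $\sqrt 2$. The only difference is cosmetic: the paper simply cites the classical bound (Corollary 2.14 in Cowen--MacCluer), whereas you also sketch its standard proof via disc automorphisms.
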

\begin{proof}
For any fixed $I\in\mathbb S$, we apply the splitting lemma to write
 $$f_I(z)=F(z)+G(z)J,$$
where   $F,G:\mathbb B_I \rightarrow \mathbb C_I $ are two holomorphic functions
and  $J\in\mathbb S$ with $J\perp I$. Then by the very definition, $F,G\in H^p(\mathbb B_I)$. Moreover, $||F||_p\leq||f||_p$ and  $||G||_p\leq||f||_p$. Applying the classical result(cf. Corollary 2.14 in \cite{Cowen}) from complex analysis to $F$ and $G$ yields
$$|F(z)|\leq\bigg(\dfrac1{1-|z|^2}\bigg)^{\frac{1}{p}}||F||_p,
\qquad|G(z)|\leq\bigg(\dfrac1{1-|z|^2}\bigg)^{\frac{1}{p}}||G||_p.$$
Therefore,
$$|f(z)|^2=|F(z)|^2+|G(z)|^2
\leq\bigg(\dfrac1{1-|z|^2}\bigg)^{\frac{2}{p}}\big(||F||^2_p+||G||^2_p\big)
\leq2\bigg(\dfrac1{1-|z|^2}\bigg)^{\frac{2}{p}}||f||_p^2$$
as desired.
\end{proof}

This leads to an  $H^p(\mathbb B)$  version of  the Cauchy inequality.

\begin{lemma}
Let $f\in H^p(\mathbb B)$ with $1\leq p\leq\infty$, then
 $$|f^{(n)}(0)|\leq\sqrt 2 \,n!\, e^{\frac{1}{p}}\bigg(1+\frac{np}{2}\bigg)^{\frac{1}{p}}||f||_p \quad\mbox{for}\quad1\leq p<\infty,$$
 and
$$|f^{(n)}(0)|\leq n!\,||f||_p \quad\mbox{for}\quad p=\infty.$$
\end{lemma}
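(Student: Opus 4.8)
The plan is to reduce the estimate to a Cauchy-type bound for the Taylor coefficients on a single slice, followed by an optimization in the radius. By the Taylor expansion (Theorem \ref{eq:Taylor}), writing $f(q)=\sum_{n=0}^{\infty}q^n a_n$ with $a_n=f^{(n)}(0)/n!$, it suffices to bound $|a_n|$. First I would fix $I\in\mathbb S$ and $r\in(0,1)$; since the power series converges absolutely and uniformly on $\overline{B(0,r)}$, I can integrate term by term and use the orthogonality relation $\frac{1}{2\pi}\int_{-\pi}^{\pi}e^{I(m-n)\theta}\,d\theta=\delta_{mn}$ to obtain the coefficient-extraction formula
$$a_n=\frac{1}{r^n}\,\frac{1}{2\pi}\int_{-\pi}^{\pi}e^{-In\theta}f(re^{I\theta})\,d\theta,$$
where the factor $e^{-In\theta}$ is placed to the \emph{left} of $f$ so that it commutes with the powers $e^{Im\theta}$ appearing in the expansion and orthogonality still isolates $a_n$. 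This yields $|a_n|\le r^{-n}\sup_{\theta}|f(re^{I\theta})|$. In the case $p=\infty$ the conclusion is then immediate: bounding $|f(re^{I\theta})|\le\|f\|_{\infty}$ and letting $r\to1^-$ gives $|a_n|\le\|f\|_{\infty}$, hence $|f^{(n)}(0)|=n!\,|a_n|\le n!\,\|f\|_{\infty}$.

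For $1\le p<\infty$ I would feed in the pointwise growth estimate of Lemma \ref{eq:H^p modulus}, namely $|f(re^{I\theta})|\le\sqrt2\,(1-r^2)^{-1/p}\|f\|_p$, to get
$$|a_n|\le\sqrt2\,\frac{\|f\|_p}{r^n(1-r^2)^{1/p}}$$
for every $r\in(0,1)$. The remaining task is to minimize $g(r)=r^{-n}(1-r^2)^{-1/p}$ over $(0,1)$. A logarithmic differentiation shows the minimum occurs at $r^2=\frac{np}{np+2}$, where $1-r^2=\frac{2}{np+2}$, and substituting gives
$$g(r)=\bigg(1+\frac{2}{np}\bigg)^{n/2}\bigg(1+\frac{np}{2}\bigg)^{1/p}.$$
Finally I would absorb the first factor using the elementary inequality $1+x\le e^x$, which yields $(1+\tfrac{2}{np})^{n/2}\le e^{1/p}$, and conclude $|a_n|\le\sqrt2\,e^{1/p}(1+\tfrac{np}{2})^{1/p}\|f\|_p$; multiplying by $n!$ produces the stated bound. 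The degenerate case $n=0$ is handled separately by simply letting $r\to0^+$ in the displayed estimate.

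There is no serious obstacle here; the proof is essentially a clean chain of a coefficient identity, a pointwise bound, and an optimization. The only delicate points are the correct placement of $e^{-In\theta}$ in the noncommutative coefficient formula and the radius optimization together with the sharp exponential estimate $(1+2/(np))^{n/2}\le e^{1/p}$, which is precisely what produces the factor $e^{1/p}(1+np/2)^{1/p}$ in the statement.
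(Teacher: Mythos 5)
Your proof is correct and takes essentially the same route as the paper's: the paper extracts $f^{(n)}(0)$ via the slice Cauchy integral formula on $\partial B_I(0,r)$ (equivalent to your orthogonality-based coefficient formula with $e^{-In\theta}$ on the left), then applies the same pointwise growth bound of Lemma \ref{eq:H^p modulus} and the identical optimization at $r^2=\frac{np}{np+2}$ together with $\bigl(1+\tfrac{2}{np}\bigr)^{n/2}\le e^{1/p}$. Your explicit care with the noncommutative placement of the kernel and with the degenerate case $n=0$ are minor refinements of the same argument.
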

\begin{proof}
When $p=\infty$, the result is the Cauchy inequality.

 When $1\leq p<\infty$, for any $I\in\mathbb S$ and $r\in(0,1)$, it follows from the  Cauchy integral formula that
$$f^{(n)}(0)=\frac {n!}{2\pi I}\int_{\partial B_I(0,r)}\frac{dz}{z^{n+1}}f(z),$$
which together with Lemma \ref{eq:H^p modulus} implies that
$$|f^{(n)}(0)|\leq
\sqrt 2\,\frac {n!}{r^n}\bigg(\frac 1{1-r^2}\bigg)^{\frac 1p}||f||_p,\quad \forall\:r\in(0,1).$$
Taking $r=\bigg(\dfrac {np}{np+2}\bigg)^{\frac12}$ yields
$$|f^{(n)}(0)|\leq
\sqrt 2\,n!\bigg(1+\frac{np}{2}\bigg)^{\frac 1p}\bigg(1+\frac{2}{np}\bigg)^{\frac n2}||f||_p
\leq
\sqrt 2\,n!\,e^{\frac 1p}\bigg(1+\frac{np}{2}\bigg)^{\frac 1p}||f||_p$$
\end{proof}

Finally, we come to prove Theorem \ref{Compactness Theorem} for the remaining case for $p\neq2$.
\begin{proof} Assume that $p\neq2$.
For each positive integer $N$ define the operator $S_N:H^p(\mathbb B)\rightarrow H^p(\mathbb B)$ via
$$S_Nf(q)=\sum\limits_{n=0}^N \varphi^{\ast n}\frac{f^{(n)}(0)}{n!}$$
for any $f\in H^p(\mathbb B)$ with the Taylor expansion
$$f(q)=\sum\limits_{n=0}^{\infty}q^n\frac{f^{(n)}(0)}{n!}.$$
Thus $S_N$ is a bounded, finite
rank operator on $H^p(\mathbb B)$ and $||S_N||_p$ at most $$\sqrt 2\,e^{\frac 1p}\sum\limits_{n=0}^N\bigg(1+\frac{np}{2}\bigg)^{\frac 1p}||\varphi||^n_{\infty}$$ for $p<\infty$,  and $$\sum\limits_{n=0}^N||\varphi||^n_{\infty}
=\frac{1-||\varphi||^{N+1}_{\infty}}{1-||\varphi||_{\infty}}$$ for $p=\infty$.

To show $S_N$ is compact, it suffices to prove that $$||S_N-C_{\varphi}||_p\longrightarrow 0\quad as \:\:n\longrightarrow \infty,$$
which follows  from the inequalities $$||S_N-C_{\varphi}||_p\leq\sqrt 2\,e^{\frac 1p}\sum\limits_{n= N}^{\infty}\bigg(1+\frac{np}{2}\bigg)^{\frac 1p}||\varphi||^n_{\infty},\qquad p<\infty$$ and $$||S_N-C_{\varphi}||_{\infty}
\leq\frac{||\varphi||^{N+1}_{\infty}}{1-||\varphi||_{\infty}}.$$
\end{proof}

\section{Conclusions}
We have introduced two slice regular compositions, which leads to the theory of slice
composition operators. In particular, we  established the Denjoy-Wolff type theorem about the  dynamical behaviors of the iterates and the Littlewood subordination principle for slice regular functions.
The slice regular theory is believed to be  based upon two kind of operators, i.e.,   the slice regular product and  slice regular compositions.


\bigskip

\noindent {\bf Acknowledgment} {This work was supported by the NNSF  of China (11371337), RFDP (20123402110068).}

\bibliographystyle{amsplain}

\end{document}